\documentclass{article}
\usepackage{amssymb}
\usepackage{hyperref}
\usepackage{amsthm}
\usepackage{mathrsfs}
\usepackage{mathtools}
\usepackage{tikz}
\usepackage{tikz-cd}
\usepackage{comment}
\usepackage[margin=1in]{geometry}
\usepackage{microtype}

\newcommand{\norm}[1]{\left\lVert#1\right\rVert}
\newcommand{\Q}{\mathbb{Q}}
\newcommand{\Z}{\mathbb{Z}}
\newcommand{\F}{\mathbb{F}}
\newcommand{\Qp}{\Q_{p}}
\newcommand{\Zp}{\Z_{p}}

\newcommand{\C}{\mathbb{C}}
\newcommand{\R}{\mathbb{R}}
\newcommand{\A}{\mathbb{A}}
\newcommand{\GL}{\mathrm{GL}}
\newcommand{\Tw}{\mathrm{tw}}
\newcommand{\Crit}{\mathrm{Crit}}
\newcommand{\vol}{\mathrm{vol}}
\newcommand{\Iw}{\mathrm{Iw}}
\newcommand{\Norm}{\mathrm{Norm}}
\newcommand{\Eis}{\mathrm{Eis}}
\newcommand{\br}{\mathrm{br}}

\newtheorem{theorem}{Theorem}[subsection]
\newtheorem{lemma}[theorem]{Lemma}

\newtheorem{proposition}[theorem]{Proposition}
\newtheorem{corollary}[theorem]{Corollary}

\theoremstyle{definition}
\newtheorem{definition}[theorem]{Definition}
\newtheorem{example}[theorem]{Example}
\newtheorem{remark}[theorem]{Remark}
\newtheorem*{theoremA}{Theorem A}
\newtheorem*{theoremB}{Theorem B}

\title{Finite-slope $p$-adic $L$-functions for $\mathrm{GL}_{3}$}
\author{Xenia Dimitrakopoulou and Rob Rockwood}
\date{}

\begin{document}

\maketitle

\begin{abstract}
Let $\Pi$ be a regular algebraic cuspidal automorphic representation of $\GL_{3}(\A_{\Q})$ of weight $(a, 0, -a)$, and let $p$ be a prime. Generalising work of Loeffler--Williams in the nearly-ordinary case, we construct $p$-adic $L$-functions for each small slope regular $P_{1}$-refinement of $\Pi$ at $p$ consistent with the conjectures of Coates--Perrin-Riou and Panchishkin. 
\end{abstract}

\section{Introduction}

Let $p$ be a prime and let $\Pi$ be a regular algebraic cuspidal automorphic representation (RACAR) of $\GL_{3}(\A_{\Q})$. A number of conjectures, most notably those of Coates--Perrin-Riou \cite{CPR89, coates89} and their refinement by Panchishkin \cite{panchishkin94}, predict that whenever $\Pi$ satisfies a suitable ordinarity condition at $p$, the critical values of the twisted $L$-functions $L(\Pi \times \eta, s)$, for $\eta$ of $p$-power conductor, are interpolated by a bounded $p$-adic measure on $\Zp^{\times}$,  the $p$-adic $L$-function of $\Pi$. Such $p$-adic $L$-functions are fundamental objects in Iwasawa theory, and for $\GL_{1}$ and $\GL_{2}$ their existence has been known for decades. For $n \geq 3$, the first (and, to date, only) construction applying to RACARs of $\GL_{n}(\A_{\Q})$ of general type is the recent work of Loeffler--Williams \cite{loefflerWilliams2021p}. For $\Pi$ of weight $\lambda = (a, 0, -a)$ which is nearly ordinary at $p$ for the maximal standard parabolic $P_{1} \subset \GL_{3}$ with Levi $\GL_{1} \times \GL_{2}$, Loeffler--Williams construct a bounded measure $L_{p}^{-}(\Pi)$ on $\Zp^{\times}$ interpolating all the critical values $L(\Pi \times \eta, -j)$, $0 \leq j \leq a$, in the left half of the critical strip. Moreover, and dually, assuming near-ordinarity for the parabolic $P_{2}$ with Levi $\GL_{2} \times \GL_{1}$, Loeffler--Williams construct a measure $L_{p}^{+}(\Pi)$ interpolating the right half of the critical strip. This proves the conjectures of Coates--Perrin-Riou and Panchishkin for RACARS of $\mathrm{GL}_{3}$.

In the present paper we remove the ordinarity hypothesis, replacing it with a \emph{small slope} hypothesis. As in \cite{loefflerWilliams2021p} we do not assume $\Pi_{p}$ is unramified: a $P_{1}$-refinement of $\Pi_{p}$ is a choice of irreducible $\mathrm{GL}_{1} \times 
\mathrm{GL}_{2}$ subrepresentation $\sigma_{p} \times \sigma_{p}'$ of the Jacquet module $J_{P_{1}}(\Pi_{p})$; after a twist we may assume the character $\sigma_{p}$ is unramified, so the refinement is determined by $\alpha_{p} = \sigma_{p}(p)$. The refined representation $\tilde{\Pi} = (\Pi, \alpha_{p})$ contributes to Betti cohomology of the locally symmetric space for $\GL_{3}$, where the associated eigenvalue of the normalised Hecke operator $U_{p, 1}$ is $p^{a+1}\alpha_{p}$. We write $h = v_{p}(p^{a+1}\alpha_{p})$ for its slope, so that $h = 0$ is the $P_{1}$ nearly-ordinary case. We say $\tilde{\Pi}$ has \emph{small slope} if $h < a + 1$. Our main result is the following (see Theorem \ref{thm:main} and Propositions \ref{prop:unique} and \ref{prop:plus}):

\begin{theoremA}
Let $\tilde{\Pi} = (\Pi, \alpha_{p})$ be a regular $P_{1}$-refinement of a RACAR of $\GL_{3}(\A_{\Q})$ of weight $(a, 0, -a)$, of small slope $h < a+1$. Then there is a $p$-adic distribution $L^{-}_{p}(\tilde{\Pi})$ on $\Zp^{\times}$, of growth order $\mathcal{O}(\log_{p}^{h})$, such that for every Dirichlet character $\eta$ of $p$-power conductor and every $0 \leq j \leq a$ with $(-1)^{j} = \omega_{\Pi}\eta(-1)$ we have
\[
    \int_{\Zp^{\times}} \eta^{-1}(x) x^{-j} \, dL^{-}_{p}(\tilde{\Pi})(x) = e_{\infty}(\Pi_{\infty} \times \eta_{\infty}, -j) \, e_{p}(\Pi_{p} \times \eta_{p}, -j) \cdot \frac{L^{(p)}(\Pi \times \eta, -j)}{\Omega^{-}_{\Pi}},
\]
where $e_{\infty}$ and $e_{p}$ are the modified Euler factors of Coates--Perrin-Riou, $\Omega^{-}_{\Pi} \in \C^{\times}$ is a period, and if $(-1)^{j} = -\omega_{\Pi}\eta(-1)$ we have 
\[
     \int_{\Zp^{\times}} \eta^{-1}(x) x^{-j} \, dL^{-}_{p}(\tilde{\Pi})(x) = 0.
\]
The distribution $L^{-}_{p}(\tilde{\Pi})$ is uniquely determined by this interpolation property, and if $h = 0$ it coincides with the measure of \cite{loefflerWilliams2021p}.
\end{theoremA}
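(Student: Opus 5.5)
We follow the strategy of Loeffler--Williams, replacing their ordinary projection by a small slope overconvergent cohomology argument in the style of Stevens and Pollack--Stevens. The basic object is the $\GL_{2}\times\GL_{1}$-branching law underlying the Rankin--Selberg/Shimura-type integral for $\GL_{3}$ (the period relating $\Pi$ to $\GL_{2}$ automorphic forms used in \cite{loefflerWilliams2021p}). We realise it cohomologically: for each $n\ge 1$ we push forward the Betti class of $\tilde\Pi$ in $H^{2}_{c}(S_{K}, V_{\lambda}^{\vee})$ along $H\hookrightarrow\GL_{3}$, where $H=\GL_{2}$ (or $\GL_{2}\times\GL_{1}$). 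We twist by the $p$-power level structure $u_{n}$ adapted to $P_{1}$ and cap with a fundamental class of a $1$-dimensional locally symmetric space for $H$. This gives a branching map
\[
\mathrm{Ev}_{n}\colon H^{2}_{c}(S_{K}, V_{\lambda}^{\vee}) \longrightarrow \mathrm{Dist}\text{-valued functions on } (\Z/p^{n})^{\times},
\]
together with a branching law $V_{\lambda}^{\vee}\to\bigoplus_{j=0}^{a}\det^{-j}$ that picks off the critical twists $-j$.

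The first step is to upgrade this to overconvergent coefficients. We replace $V_{\lambda}^{\vee}$ by the module $\mathcal{D}_{\lambda}$ of locally analytic (parahoric, $P_{1}$-type) distributions. We then construct a single map $\mathrm{Ev}\colon H^{2}_{c}(S_{K},\mathcal{D}_{\lambda})\to\mathcal{D}(\Zp^{\times})$ that is compatible with all $\mathrm{Ev}_{n}$ and $j$ under specialisation $\mathcal{D}_{\lambda}\to V_{\lambda}^{\vee}$. The key compatibility is $\mathrm{Ev}_{n+1}\circ U_{p,1}=\mathrm{Ev}_{n}$, up to the normalisations defining $U_{p,1}$. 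This is the analogue of the ``distribution relation'' of Loeffler--Williams, and its proof is a double coset computation of the same shape as theirs. The second step is a control theorem: the specialisation map is an isomorphism on the $U_{p,1}=p^{a+1}\alpha_{p}$ generalised eigenspace whenever $h<a+1$. To prove it we use the standard argument. The kernel of $\mathcal{D}_{\lambda}\to V_{\lambda}^{\vee}$ is, via the $P_{1}$-Verma-type BGG resolution (only one simple reflection is relevant for the $P_{1}$-parahoric), the image of a module on which $U_{p,1}$ is divisible by $p^{a+1}$. Slopes $<a+1$ therefore cannot occur on it, and the same holds for the cokernel. Regularity of the refinement then ensures that the $\tilde\Pi$-eigenspace is one-dimensional (in each sign), so we obtain a unique overconvergent lift $\Phi_{\tilde\Pi}$ of the classical class $\phi^{\pm}_{\Pi}$ normalised by $\Omega^{\pm}_{\Pi}$.

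We define $L^{-}_{p}(\tilde\Pi)=\mathrm{Ev}(\Phi_{\tilde\Pi})$. Interpolation follows by specialising: integrating $\eta^{-1}(x)x^{-j}$ reduces to $(p^{a+1}\alpha_{p})^{-n}\mathrm{Ev}_{n}^{j}(\phi_{\Pi})$. We then compute this classical quantity by unfolding to the Rankin--Selberg zeta integral, exactly as in \cite{loefflerWilliams2021p}. The local zeta integral at $p$ with the $u_{n}$-translate and the $U_{p,1}$-eigenvector yields $e_{p}(\Pi_{p}\times\eta_{p},-j)$, and the archimedean computation gives $e_{\infty}$. The sign condition $(-1)^{j}=\omega_{\Pi}\eta(-1)$ enters through the choice of the $\pm$ eigencomponent, and in the opposite parity the pairing vanishes by symmetry under the component group. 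For the growth bound we observe that $\Phi_{\tilde\Pi}$ is an eigenvector with slope $h$. Iterating the distribution relation shows that the values on $a+p^{n}\Zp$ of degree-$\le a$ polynomials are bounded by $p^{nh}$, and this gives growth order $h$. Uniqueness follows from the Amice--Vélu/Vishik theorem: since $h<a+1$, a distribution of order $h$ is determined by its integrals against $\eta(x)x^{j}$ for $0\le j\le a$. When $h=0$, the overconvergent lift is the ordinary one, so our construction agrees with the Loeffler--Williams measure by uniqueness.

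We expect the main obstacle to be the control theorem together with the construction of the overconvergent branching map. Because the refinement is only $P_{1}$-parahoric, the distribution module must be taken with respect to the parabolic $P_{1}$ rather than the Borel. We then have to verify that $U_{p,1}$ acts on the kernel of specialisation with divisibility exactly $p^{a+1}$ (and not merely $p$). We would attempt this by an explicit analysis of the $\GL_{2}$-action on distributions on $\overline{N}_{P_{1}}(\Zp)\cong\Zp^{2}$, checking that the dual-weight twist in the BGG sequence is by the reflection sending $(a,0,-a)$ to $(-1,a+1,-a)$.
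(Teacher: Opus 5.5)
There is a genuine gap, and it is exactly where the paper does its real work.

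\textbf{The evaluation map.} Your map $\mathrm{Ev}_n$ is not well defined as described, because it omits the Eisenstein class. The class $\phi$ lies in $H^2_c$ of the $5$-dimensional space $Y^{\GL_3}$. Restricting it along $\iota$ to the $3$-dimensional $\tilde{Y}^H$ leaves a class of degree $2$, and no $1$-dimensional fundamental class turns that into a scalar. Analytically, the Jacquet--Piatetski-Shapiro--Shalika integral for $\GL_3\times\GL_2$ (Definition~\ref{def:globalzeta}) needs a $\GL_2$ automorphic form, here the Eisenstein series $E^{\Phi}$. Without it there is nothing to unfold to $L(\Pi\times\eta,-j)$.

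In the paper the degree-$1$ input is Be\u{\i}linson's class $\Eis^j_{\Phi_{f,t}}\in H^1(Y^{\GL_2},\mathscr{V}_{(0,-j)})$. The finite-level quantity is $\langle {}_cz^{[a,j]}_n\cup\mathbb{S}_j,\phi_n\rangle$, where ${}_cz^{[a,j]}_n$ is the $u\tau^n$-translate of $\iota_*$ of the Eisenstein--Iwasawa class cupped with $f^{a,j}$.

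Your branching law is also wrong. No nontrivial character $\det^{-j}$ occurs in $V_\lambda|_H=\bigoplus V^H_{(j,-i;i-j)}$. The twists are picked out by the $Q^0_H$-invariant vectors $f^{a,j}\in V_\lambda\otimes\nu_1^{-j}$, which see characters only on the mirabolic.

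\textbf{Compatibility in $j$.} More seriously, the clause ``a single map $\mathrm{Ev}$ compatible with all $\mathrm{Ev}_n$ and $j$'' hides the whole difficulty. Overconvergent coefficients on the $\GL_3$ side do not by themselves supply it. What varies with $j$ lives on the other side of the pairing: the weight of the Eisenstein class, and the branching vector $f^{a,j}$ in the twisted representation $V_\lambda\otimes\nu_1^{-j}$. The paper handles these as follows.
\begin{itemize}
\item For the Eisenstein side, it works with the single trivial-coefficient class ${}_c\mathcal{EI}_{\Phi^{(p)}}$ and uses Kings' moment formula \eqref{eq:kings} via Lemma~\ref{lem:mapcomparison}.
\item For the branching side, it proves the congruence of Proposition~\ref{prop:cong}, $\sum_{j=0}^{k}(-1)^j\binom{k}{j}\tau^{-n}\ast(u^{-1}\cdot\nu_1^jf^{a,j})\equiv 0 \bmod p^{nk}$. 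This comes from the identity $\nu_1^jf^{a,j}(\bar n\ell u^{-1}q)=\nu_1^j(q)\,(\ell\cdot w^-_\lambda)$ on the open orbit of Lemma~\ref{lem:opencell}.
\end{itemize}
With these in hand no control theorem is needed. Norm compatibility (Corollary~\ref{cor:normcompatXi}) gives an algebraic distribution of degree $\le a$. Proposition~\ref{prop:classcong} gives the order-$h$ moment bounds. Amice--V\'elu/Vi\v{s}ik (Proposition~\ref{prop:interp}), using $h<a+1$, then produces the distribution.

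Your $P_1$-parahoric control theorem, with the bound from $s_1\cdot\lambda=(-1,a+1,-a)$, is plausible, but it is not the obstacle. To complete your route you would still have to define an evaluation on $\mathcal{D}_\lambda$-valued cohomology. That means pairing with the Eisenstein--Iwasawa class through a $Q^0_H$-equivariant interpolation of the $f^{a,j}$. Checking that it specialises correctly for every $j$ is essentially the paper's key congruence.

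\textbf{Signs.} There is no $\pm$ decomposition for $\GL_3$. The non-identity component of $\GL_3(\R)$ contains the central element $-1$, so $\Pi_f$ occurs once in $H^2_c$. The wrong-parity vanishing comes from the archimedean zeta integral, not from a choice of sign component.
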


Moreover, we show:

\begin{theoremB}
If instead $\tilde{\Pi}$ is a small slope regular $P_{2}$-refinement of a RACAR of $\mathrm{GL}_{3}(\A)$, then there is a distribution $L^{+}_{p}(\tilde{\Pi})$ on $\Zp^{\times}$, of the same growth order, interpolating the critical values $L(\Pi \times \eta^{-1}, j+1)$, $0 \leq j \leq a$, in the right half of the critical strip.
\end{theoremB}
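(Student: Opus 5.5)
The plan is to deduce Theorem B from Theorem A by duality: replace $\Pi$ by its contragredient $\Pi^{\vee}$, which swaps $P_{1}$- and $P_{2}$-refinements and exchanges the two halves of the critical strip via the functional equation. We do not rebuild the construction for $P_{2}$.

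First we check that $\Pi^{\vee}$ satisfies the hypotheses of Theorem A. Since the weight is $(a,0,-a)$, $\Pi^{\vee}$ is again a RACAR of weight $(a,0,-a)$. Taking contragredients intertwines Jacquet modules for $P_{2}$ and $P_{1}$: up to the modulus character, $J_{P_{1}}(\Pi_{p}^{\vee}) \cong J_{P_{2}}(\Pi_{p})^{\vee}$ after conjugating by the long Weyl element, which exchanges the Levis $\GL_{2}\times\GL_{1}$ and $\GL_{1}\times\GL_{2}$. Hence a regular $P_{2}$-refinement $\sigma_{p}'\times\sigma_{p}$ of $\Pi_{p}$ gives a regular $P_{1}$-refinement $\sigma_{p}^{-1}\times\sigma_{p}'^{\vee}$ of $\Pi_{p}^{\vee}$. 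We define the $U_{p,2}$-eigenvalue and slope of a $P_{2}$-refinement so that it equals the $U_{p,1}$-eigenvalue of this dual $P_{1}$-refinement. On cohomology this is the involution $g\mapsto {}^{t}g^{-1}$ composed with the long Weyl element, which sends $U_{p,2}$ to $U_{p,1}$ up to the central normalisation. So the slope $h$ is preserved, and small slope for $P_{2}$ means exactly small slope for the dual.

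Now set $L^{+}_{p}(\tilde{\Pi}) := L^{-}_{p}(\tilde{\Pi}^{\vee})$, possibly twisted by the character $x\mapsto \omega_{\Pi,p}(x)$ or $x^{-1}$ to align variables, and possibly rescaled. Theorem A gives growth $\mathcal{O}(\log_{p}^{h})$ and interpolation of $L^{(p)}(\Pi^{\vee}\times\eta,-j)$ for $0\le j\le a$. The global functional equation gives $L(\Pi^{\vee}\times\eta,-j)=\varepsilon\cdot L(\Pi\times\eta^{-1},1+j)$, with an $\varepsilon$-factor and a ratio of archimedean factors. The finite-at-$p$ parts differ only by Euler factors at $p$ and at primes dividing the conductor of $\Pi$; the latter are handled by the usual $p$-adic-unit twisting factors, and the $p$-part is absorbed into $e_{p}$. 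We then verify that $e_{\infty}$, $e_{p}$ for $(\Pi^{\vee}\times\eta,-j)$, multiplied by the $\varepsilon$- and $\Gamma$-ratios, equal the Coates--Perrin-Riou factors for $(\Pi\times\eta^{-1},j+1)$, up to a factor $\tau(\eta)^{3}\,\mathrm{cond}(\eta)^{\ast}$ and constants. The Gauss sum and conductor powers can be interpolated, since they are of the form $\eta(N)$ times powers, or they are built into the normalisation of the $e_{p}$ factors.

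The main obstacle is this Euler-factor bookkeeping, not the construction itself. Modified Euler factors of Coates--Perrin-Riou type are known to transform correctly under $V\mapsto V^{\vee}(1)$, and this is exactly how the Panchishkin subspace for $P_{2}$ arises from the one for $P_{1}$. The step is still delicate when $\Pi_{p}$ is ramified, because the $\varepsilon$-factor of $\sigma_{p}'$ enters. If matching the factors exactly fails, the fallback is to rerun the Theorem A construction with the roles of the Hecke operators and the branching dual, using the same small-slope control theorem. The existence argument goes through identically, with the interpolation formula read off from the dual zeta integral computation.
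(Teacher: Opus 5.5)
Your overall route is the paper's: pass to $\Pi^{\vee}$, use Casselman duality together with $w_0\bar{P}_2w_0^{-1}=P_1$ to match a regular $P_2$-refinement of $\Pi_p$ with a regular $P_1$-refinement of $\Pi_p^{\vee}$ of the same slope (Lemma \ref{lem:slopeduality}), apply Theorem A to $\tilde{\Pi}^{\vee}$, and transport the interpolation formula through the functional equation. The growth order is inherited because the operations used preserve $\mathscr{D}_h(\Zp^{\times},L)$. Two small points on the first step. Regularity is what makes the refinement a direct summand of $J_{P_2}(\Pi_p)$, so that it dualises to a subrepresentation and not merely a quotient. And with the paper's central-character definition of slope, equality of slopes is just that the central character of the $\GL_2$-constituent times the $\GL_1$-constituent is $\omega_{\Pi_p}$, which has finite order.

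The bookkeeping, which you single out as the main obstacle, is where your sketch goes wrong. There is no residual factor $\tau(\eta)^{3}\,\mathrm{cond}(\eta)^{\ast}$. The relevant input is Coates' identity \cite[(20)]{coates89} (cf.\ \cite[Lemma 2.18]{loefflerWilliams2021p}). Since $e_p$ is an inverse $\gamma$-factor of the refinement, the local functional equation $\gamma(\pi,s,\psi)\gamma(\pi^{\vee},1-s,\psi^{-1})=1$ absorbs all $p$-adic and archimedean $L$-, $\varepsilon$- and $\Gamma$-factors exactly into $e_\infty e_p$ on the two sides. What remains is only $\prod_{\ell\neq p}\varepsilon_\ell(\Pi^{\vee}_\ell\times\eta_\ell,-j)$.

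This matters, because your first remedy would fail. Gauss sums and powers of $\mathrm{cond}(\eta)$ are not of the form $\eta(N)N^{j}$ for a fixed $N$, so no twist, rescaling or convolution of a distribution could remove them. Likewise, the primes $\ell\mid N_\Pi$, $\ell\neq p$, contribute $\varepsilon$-factors, not Euler factors; their Euler factors sit inside $L^{(p)}$ on both sides. By Tate's twisting formula the product of these $\varepsilon$-factors is $\varepsilon^{(p)}(\Pi^{\vee,(p)},0)\,(N^{(p)}_\Pi)^{j}\,\eta(N^{(p)}_\Pi)$.

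This dictates the precise definition $L^{+}_p(\tilde{\Pi})=\varepsilon^{(p)}(\Pi^{\vee,(p)},0)^{-1}\,(\iota\circ\mathrm{tw}_1)\bigl([N^{(p)}_\Pi]*L^{-}_p(\tilde{\Pi}^{\vee})\bigr)$. Here $\iota\circ\mathrm{tw}_1$ converts integration of $\eta(x)x^{j+1}$ into integration of $\eta^{-1}(x)x^{-j}$, so no $\omega_{\Pi,p}$-twist is needed. The Dirac convolution contributes $\eta(N^{(p)}_\Pi)^{-1}(N^{(p)}_\Pi)^{-j}$, cancelling the $\eta$- and $j$-dependence of the bad-prime $\varepsilon$-factors. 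With this in place your fallback of rerunning the construction for $P_2$ is unnecessary.
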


\subsection{Outline of the construction}

We briefly recall the strategy of \cite{loefflerWilliams2021p} as we work within the same framework. Building on work of Mahnkopf \cite{mahnkopf98, mahnkopf00}, Loeffler--Williams construct a tower of classes in $H^{3}$ of the locally symmetric space for $\GL_{3}$ with coefficients in group rings $\Zp[(\Z/p^{n}\Z)^{\times}]$, obtained by pushing forward Be\u{\i}linson's motivic Eisenstein classes for $\GL_{2}$ along a twisted embedding $H = \GL_{2} \times \GL_{1} \hookrightarrow \GL_{3}$. Using the theory of pushforwards between `spherical pairs' formalised in  \cite{loefflerspherical}, these classes satisfy a norm compatibility up to multiplication by the Hecke operator $U'_{p, 1}$. Pairing with a $P_{1}$-nearly-ordinary eigenclass $\phi$ then produces (up to an explicit volume factor, which we suppress) a group ring element ${}_{c}\tilde{\Xi}_{n}^{[a; j]} \in \mathcal{O}[\left(\Z/p^{n}\Z\right)^{\times}]$ (where $a$ is a weight parameter and $c$ is an integral `smoothing factor') for each critical twist $0 \leq j \leq a$ and $n \geq 1$, with $\mathcal{O}/\Zp$ some finite extension. The (nearly)-ordinary hypothesis allows the authors to integrally normalise these classes to be norm compatible on the nose and thus their inverse limits determine $p$-adic measures for each $0 \leq j \leq a$. The compatibility of these $a + 1$ measures as $j$ varies is proved using the $p$-adic interpolation of branching laws of \cite{loeffler2021spherical}.

When we pass from nearly ordinary to small-slope refinements we can no longer integrally normalise the classes ${}_{c}\tilde{\Xi}^{[a; j]}_{n}$ as they now experience unbounded growth as $n$ increases. The classical interpolation method underlying the constructions of Amice--V\'elu \cite{amicevelu} and Vi\v{s}ik \cite{visik} in the case of $\GL_{2}$ provides a way to handle this growth. In our setting, congruences between the ${}_{c}\tilde{\Xi}^{[a; j]}_{n}$ yield bounds on the corresponding local moments. Together with norm compatibility, these bounds allow us to construct a locally analytic distribution of growth $\mathcal{O}(\log_{p}^{h})$, where $h$ is the $p$-adic valuation of the $U_{p}$-eigenvalue of $\phi$.

The technical heart of this paper is therefore to prove such congruences for the Betti--Eisenstein classes of \cite{loefflerWilliams2021p}. This is where our approach diverges from \textit{op.\ cit.}: rather than the ordinary-projector techniques of \cite{loeffler2021spherical}, we use the finite-slope methods introduced by the second author in \cite{rockwood2022spherical}. The key input is an explicit congruence (Proposition \ref{prop:cong}) between branching vectors in highest weight representations of $\mathrm{GL}_{3}$.  These congruences translate into congruences between the finite-level classes $\tilde{\Xi}^{[a;j]}_{n}$ (Proposition \ref{prop:classcong}), allowing us to deduce the existence of the desired distribution (Theorem \ref{thm:growth}). The computation of its values against characters $\eta(x)x^{j}$ is a global Rankin--Selberg integral, the computation of which is recalled from \cite[\S 8]{loefflerWilliams2021p} whose rather involved computations are agnostic to slope considerations. In particular, the identification of the archimedean interpolation factor is inherited wholesale from the comparison with symmetric square $p$-adic $L$-functions carried out in \textit{op.\ cit.}, \S 9.5.

\subsection{Structure of the paper}

In Section \ref{sec:groups}, we set up the relevant algebraic groups, representations, integral lattices and branching laws, and prove the congruences underlying the whole construction. Section \ref{sec:auto} collects the automorphic preliminaries: Whittaker models, cohomological representations, critical values, the Coates--Perrin-Riou interpolation factors, Hecke operators, $P$-refinements and slopes, and the local newvector theory for $P_{1}$-refinements. Section \ref{sec:lss} treats locally symmetric spaces, local systems and their cohomology, including the decomposition into connected components and the descent of coefficient twists to group rings. Section \ref{sec:eis} recalls Eisenstein series and Be\u{\i}linson's Eisenstein classes together with their $p$-adic interpolation due to Kings. Section \ref{sec:analytic} recalls the theory of $p$-adic distributions and the interpolation results we require. In Section \ref{sec:pairing}, we construct the tempered distributions $\Xi^{[a]}_{\infty}(\phi)$ attached to finite-slope eigenclasses, and in Section \ref{sec:comppair} we compute their values as Rankin--Selberg integrals, following \cite{loefflerWilliams2021p}. Section \ref{sec:padicL} assembles the $p$-adic $L$-function and proves the interpolation formula, and Section \ref{sec:duality} deduces the right-half $p$-adic $L$-function by duality.

\subsection*{Acknowledgements}
We would like to thank David Loeffler and Chris Williams for helpful conversations. 

\subsection*{Notation}
Throughout, $p$ is a prime, which for notational simplicity we assume to be odd (the case $p = 2$ requires only the standard modifications, replacing $1 + p\Zp$ by $1 + 4\Z_{2}$ throughout Section \ref{sec:analytic}). We write $\A = \A_{\Q}$ for the ad\`eles of $\Q$, $\A_{f}$ for the finite ad\`eles, and $\A_{f}^{(p)}$ (resp.\ $\A^{(S)}$) for the ad\`eles away from $p$ (resp.\ away from a finite set $S$). For a $p$-adic field $L,$ we write $\mathcal{O} = \mathcal{O}_{L}$ for its ring of integers.

\section{Algebraic groups and representations} \label{sec:groups}

We define the algebraic groups, characters and representations that we will use in the article.

\subsection{Algebraic groups} \label{sec:algp}

Let $G = \GL_{3} \times \GL_{1}$ and $H = \GL_{2} \times \GL_{1}$, considered as reductive group schemes over $\Z$, and write $\iota: H \hookrightarrow G$ for the embedding
\[
    \begin{psmallmatrix}
        a & b \\ c & d
    \end{psmallmatrix} \times (e) \mapsto \begin{psmallmatrix}
        a & b & \\
        c & d & \\
          &   & e
    \end{psmallmatrix} \times (e).
\]
We will use the following subgroups:
\begin{itemize}
    \item For $\GL_{3}$: the upper-triangular Borel subgroup $B_{\GL_{3}}$, with diagonal maximal torus $T = T_{\GL_{3}}$ and unipotent radical $N_{\GL_{3}}$, and the two maximal standard parabolic subgroups
    \[
        P_{1} = \begin{psmallmatrix}
            * & * & * \\
             & * & * \\
             & * & *
            \end{psmallmatrix}, \qquad P_{2} = \begin{psmallmatrix}
            * & * & * \\
            * & * & * \\
             &  & *
            \end{psmallmatrix},
    \]
    with Levi subgroups $L_{1} \cong \GL_{1} \times \GL_{2}$ and $L_{2} \cong \GL_{2} \times \GL_{1}$ and unipotent radicals $N_{1}, N_{2}$. For $i = 1, 2$ we define the opposite parabolics $\bar{P}_{i} = \bar{N}_{i}L_{i}$. These extend to parabolic subgroups $P_{i}^{G} = P_{i} \times \mathrm{GL}_{1} \subset G$ with Levi factor $L^{G}_{i} := L_{i} \times \mathrm{GL}_{1}$ and unipotent radical $N_{i} \times \{1\}$ (which we identify with $N_{i}$). 
    \item For $H$: the upper-triangular Borel subgroup $B_{H} = B_{\GL_{2}} \times \GL_{1}$, with maximal torus $T_{H} = T_{\GL_{2}} \times \GL_{1}$ and unipotent radical $N_{H} = N_{\GL_{2}} \times \{1\}$. Note that $\iota$ identifies $T_{H}$ with $T$ after projection to $\mathrm{GL}_{3}$.
\end{itemize}

We define a basis of characters $\nu_{i} \in X^{\bullet}(H) := \mathrm{Hom}(H, \mathbb{G}_{m})$, $i = 1, 2$, by
\[
    \nu_{1}((A, z)) = \frac{\det A}{z}, \qquad \nu_{2}((A, z)) = z,
\]
so that $(\nu_{1}, \nu_{2})$ defines an isomorphism $H/H^{\mathrm{der}} \cong \GL_{1} \times \GL_{1}$. These characters extend (under $\iota$) to a basis of $X^{\bullet}(G)$, which we again denote $\nu_{1}, \nu_{2}$, given by
\[
    \nu_{1}((B, t)) = \frac{\det B}{t^{2}}, \qquad \nu_{2}((B, t)) = t,
\]
defining an isomorphism $G/G^{\mathrm{der}} \cong \GL_{1} \times \GL_{1}$.

\begin{definition} \label{def:QH0}
    Define a subgroup scheme
    \[
        Q_{H}^{0} = \left\{\begin{psmallmatrix}
                x & *  \\ & 1  \\
                \end{psmallmatrix} \times (z)\right\} \subset H,
    \]
    a \emph{mirabolic subgroup} in the language of \cite{loefflerspherical}.
\end{definition}

\begin{definition} \label{def:u}
    As in  \cite[\S 1.4]{loefflerWilliams2021p}, let
    \[
        u := \begin{psmallmatrix}
        1 &  & 1\\
        & 1 & \\
        & & 1
    \end{psmallmatrix}\begin{psmallmatrix}
        1 & & \\
        & & -1\\
        & 1 &
    \end{psmallmatrix} \in \GL_{3}(\Z),
    \]
    regarded as an element of $G(\Z)$ with trivial $\GL_{1}$-component.
\end{definition}

The following fundamental computation shows that $Q_{H}^{0}$ has an open orbit on the flag variety $\bar{P}^{G}_{1} \backslash G$, with base point $\bar{P}^{G}_{1}u^{-1}$ i.e. the pair $(G, H)$ is \emph{spherical}. 

\begin{lemma} \label{lem:opencell}
    The multiplication map $\mu: \bar{P}_{1}^{G} \times Q_{H}^{0} \to G$, $(\bar{p}, q) \mapsto \bar{p}u^{-1}q$, is smooth with image an open subscheme $U_{\mathrm{Sph}} := \bar{P}_{1}^{G}u^{-1}Q_{H}^{0}$ of $G$. Moreover
    \[
        Q_{H}^{0} \cap u\bar{P}^{G}_{1}u^{-1} = \left\{\begin{psmallmatrix}
            x & & \\ & 1 & \\ & & x
        \end{psmallmatrix} \times (x)\right\} \subset \ker(\nu_{1}).
    \]
\end{lemma}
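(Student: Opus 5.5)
The plan is to verify the lemma by explicit matrix computation combined with a dimension count, and then obtain smoothness from a torsor structure. First I will write $u$ explicitly:
\[
u = \begin{psmallmatrix} 1 & 1 & 0 \\ 0 & 0 & -1 \\ 0 & 1 & 0 \end{psmallmatrix}.
\]
The subgroup $\bar P_1$ consists of matrices whose first row is $(*,0,0)$. So $g \in u\bar P_1 u^{-1}$ exactly when the first row of $u^{-1} g u$ has vanishing second and third entries, which is a pair of explicit polynomial conditions on $g$.

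Next I will impose these conditions on a general element
\[
q = \begin{psmallmatrix} x & y & \\ & 1 & \\ & & z \end{psmallmatrix} \times (z) \in Q_H^0 .
\]
The two conditions should be linear in $y$ and in $z - x$, forcing $y = 0$ and $z = x$. This gives the claimed intersection $\{\mathrm{diag}(x,1,x) \times (x)\}$. The inclusion in $\ker(\nu_1)$ is then immediate, since $\nu_1 = \det B / t^2 = x^2/x^2 = 1$. Both computations are valid over any ring, so the intersection $S := Q_H^0 \cap u\bar P_1^G u^{-1}$ is a copy of $\mathbb{G}_m$ as a group scheme over $\Z$.

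For the first assertion I will count dimensions. We have $\dim \bar P_1^G = 7 + 1 = 8$, $\dim Q_H^0 = 3$ and $\dim S = 1$, so on every geometric fibre the image of $\mu$ has dimension $8 + 3 - 1 = 10 = \dim G$. Irreducibility of $G$ then forces the orbit to be open. To make this work integrally, I will exhibit a regular function $f$ on $G$ with two properties:
\begin{itemize}
\item $f$ transforms under left $\bar P_1^G$ and right $Q_H^0$ translation by characters;
\item $f(u^{-1}) = \pm 1$.
\end{itemize}
The natural candidate is a suitable minor built from the first row of $g u$ and the $\bar P_1$-stable line. I will then check that $\bar P_1^G u^{-1} Q_H^0 = \{f \neq 0\}$. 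For the reverse inclusion I will construct, for $g$ with $f(g)$ invertible, an explicit factorisation $g = \bar p u^{-1} q$ by solving for $q$ step by step. The entries of $q$ come out as polynomials in the entries of $g$ and $f(g)^{-1}$, and the solution is unique up to the $S$-action.

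For smoothness, $S$ acts freely on $\bar P_1^G \times Q_H^0$ by
\[
s \cdot (\bar p, q) = (\bar p\, u^{-1} s^{-1} u,\; s q),
\]
and $\mu$ is invariant under this action. The explicit local section from the previous step shows that $\mu : \bar P_1^G \times Q_H^0 \to U_{\mathrm{Sph}}$ is an $S$-torsor, trivial over $U_{\mathrm{Sph}}$. Since $S \cong \mathbb{G}_m$ is smooth over $\Z$, $\mu$ is smooth.

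The main obstacle is making the openness and torsor statements hold integrally over $\Z$ and not just over a field: one must check that the factorisation formulas involve no denominators other than $f(g)$. If the explicit inversion becomes messy, I will instead use the fibral criterion. Both source and target are smooth over $\Z$, and on each geometric fibre (including characteristic $p$) $\mu$ has constant fibres $S$ over an open image. Hence $\mu$ is flat with smooth fibres, and therefore smooth, and its image is open because smooth morphisms are open.
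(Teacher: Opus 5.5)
Your proposal is correct. The intersection computation is the same as the paper's: it writes out $u^{-1}qu=\begin{psmallmatrix} x & x-z & -y\\ & z & \\ & & 1\end{psmallmatrix}\times(z)$ and reads off $x=z$, $y=0$. For the first assertion, however, you take a genuinely different route. The paper uses the Jacobian criterion. In the $11$ coordinates $n_{21},n_{31},\ell_{ij},t,x,y,z$ of $\bar P_1^G\times Q_H^0$, it exhibits a $10\times 10$ minor of the Jacobian of $\mu$ equal to the unit $-z^3x^2\ell_{11}^5$. Hence $\mu$ is smooth, so flat and universally open, and its image is open. The paper never identifies the image or the fibres.

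You instead show that $\mu$ is a trivial torsor, under the scheme-theoretic stabiliser $S\cong\mathbb{G}_m$, over an explicit principal open. Your function $f$ can be pinned down. Via first rows, $\bar P_1\backslash\GL_3\cong\mathbb{P}^2$, and $Q_H^0$ acts on row vectors by $(a,b,c)\mapsto(ax,ay+b,cz)$. So the orbit of the first row $(1,0,-1)$ of $u^{-1}$ is $\{ac\neq 0\}$, and $f(g,t)=g_{11}g_{13}$ (a product of entries rather than a minor). The section is
\[
q=\bigl(\begin{psmallmatrix} g_{11}&g_{12}&\\ &1&\\ &&-g_{13}\end{psmallmatrix},-g_{13}\bigr),\qquad \bar p=(g,t)\,(u^{-1}q)^{-1}.
\]
It inverts only $g_{11}$ and $g_{13}$, so your integrality worry disappears.

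Your route gives an explicit description $U_{\mathrm{Sph}}=\{(g,t): g_{11}g_{13}\ \text{invertible}\}$, together with the fibres of $\mu$. For instance, it makes the membership $\bar n\ell\tau^m n\tau^{-m}u^{-1}\in U(R)$, used in the proof of Proposition \ref{prop:cong}, a one-line check: that matrix has first row $\ell_{11}(1,-p^m n_{13},p^m n_{12}-1)$. The paper's route is mechanical and needs no guess for $f$, but the bookkeeping it involves explains nothing about the geometry.

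Your fallback via the fibral criterion also works. There, however, the step ``hence flat'' on each fibre needs a justification such as miracle flatness: regular source and target, and fibres of dimension $11-10$.
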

\begin{proof}
    For $q \in Q_{H}^{0}$ with parameters $(x, y, z)$ as in Definition \ref{def:QH0}, a direct computation gives
    \begin{equation} \label{eq:uconj}
        u^{-1} \begin{psmallmatrix}
            x & y & \\ & 1 & \\ & & z
        \end{psmallmatrix} \times (z) u  = \begin{psmallmatrix}
            x & x - z & -y \\ & z & \\ & & 1
        \end{psmallmatrix} \times (z).
    \end{equation}
    This lies in $\bar{P}^{G}_{1}$ if and only if $x = z$ and $y = 0$, which gives the description of $Q_{H}^{0} \cap u\bar{P}_{1}^{G}u^{-1}$, which is obviously in the kernel of $\nu_{1}$. 

    If we can show that $\mu$ is smooth (in particular flat) then it follows from \cite[01UA]{stacks-project} that it is universally open and in particular has open image.   If we write the parameters of $\bar{P}_{1}$ as $\begin{psmallmatrix}
        1 & &  \\
        n_{21} & 1 &  \\
        n_{31} & & 1
    \end{psmallmatrix}\begin{psmallmatrix}
    \ell_{11} & & \\
     & \ell_{22} & \ell_{23} \\
     & \ell_{32} & \ell_{33}
    \end{psmallmatrix} \times (t)$ then by \cite[01V9(6)]{stacks-project} it suffices to show that the Jacobian of $\mu$ computed using the $10$ standard parameters of $G$ and the $11$ parameters $\ell_{ij}, n_{ij}, x, y ,z, t$ of $\bar{P}^{G}_{1} \times Q_{H}^{0}$ has an invertible $10 \times 10$  minor. Removing the $\ell_{11}$ column of the Jacobian matrix, one computes that this minor is $-z^{3}x^{2}\ell_{11}^{5}$, which is visibly invertible. 
\end{proof}

\subsection{Representations}

For any algebraic group $A$ write $X^{\bullet}(A) = \mathrm{Hom}_{\mathrm{Grp\text{-}sch}}(A, \mathbb{G}_{m})$ for its character group. We identify $X^{\bullet}(T) \cong \Z^{3}$ by letting $\chi_{i}$ be projection to the $i$th diagonal entry and sending $a\chi_{1} + b\chi_{2} + c\chi_{3} \mapsto (a, b, c)$. Pullback via $\iota$ gives a compatible  identification $X^{\bullet}(T_{H}) \cong \Z^{3}$.

The irreducible algebraic representations of $\GL_{3}$ are parameterised by dominant weights $\lambda = (a, b, c)$, $a \geq b \geq c$ and we write $V_{\lambda} = V^{\GL_{3}}_{\lambda}$ for the irreducible $\Qp$-linear representation of highest weight $\lambda$, always regarded as a representation of $G$ with trivial $\GL_{1}$-action. We say $\lambda$ is \emph{pure} if $a + c = 2b$; these are precisely the weights for which $V_{\lambda}^{\vee}$ is a twist of $V_{\lambda}$ by an element of $X^{\bullet}(\GL_{3})$, and every pure weight is equal to some $(a, 0, -a)$ modulo $X^{\bullet}(\GL_{3})$.

Similarly, the irreducible $\Qp$-linear representations of $H$ are parameterised by weights $\mu = (r, s; t)$ with $r \geq s$, and we write $V^{H}_{\mu}$ for the corresponding representation.

\subsection{Integral lattices} \label{sec:lattices}

Let $V^{A}_{\mu}$ be an irreducible algebraic representation of a split reductive $\Z$-group $A$, with a distinguished highest weight vector $v_{\mu}$. An \emph{admissible lattice} for $(V^{A}_{\mu}, v_{\mu})$ is a $\Z$-lattice $\mathcal{L} \subset V^{A}_{\mu}$ such that:
\begin{enumerate}
    \item the map $A_{/\Q} \to \GL(V^{A}_{\mu})$ extends to a map of $\Z$-group schemes $A \to \GL(\mathcal{L})$;
    \item the intersection of the highest weight space with $\mathcal{L}$ is $\Z \cdot v_{\mu}$.
\end{enumerate}
There are finitely many admissible lattices with a unique maximal and a unique minimal one (cf.\ \cite[\S 4.2]{LZGsp}). As in \cite{loefflerWilliams2021p} we let $V_{\lambda, \Z}$ denote the \emph{maximal} admissible lattice in $V_{\lambda}$, and $V^{H}_{\mu, \Z}$ the \emph{minimal} admissible lattice in $V^{H}_{\mu}$: this ensures that the branching maps of the next section carry $V^{H}_{\mu, \Z}$ into $V_{\lambda, \Z}$. For a commutative ring $R$ we set $V^{A}_{\mu, R} := V^{A}_{\mu, \Z} \otimes R$.

\subsection{Branching maps} \label{sec:branch}

The following classical branching law describes the restriction of $V_{\lambda}$ to $H$ for pure $\lambda$, see e.g.\ \cite[\S 8]{goodmanwallach}.

\begin{theorem} \label{thm:branching}
    Let $\lambda = (a, 0, -a)$ with $a \in \Z_{\geq 0}$. Then
    \[
        V_{\lambda}\big\vert_{H} \cong \bigoplus_{0 \leq i, j \leq a} V^{H}_{(j, -i; i - j)}.
    \]
\end{theorem}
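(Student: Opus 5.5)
The plan is to restrict in two stages: first from $\GL_{3}$ to $\GL_{2} \times \GL_{1}$ (the Levi $L_{2}$) via the classical Gelfand--Tsetlin interlacing rule, and then to $H$ by tracking how the second $\GL_{1}$ factor acts. Since $H$ acts on $V_{\lambda}$ through $\iota$, the element $(A, e) \in H$ acts as the block matrix $\mathrm{diag}(A, e) \in \GL_{3}$. Restriction to $H$ is therefore the same as restriction to $L_{2} = \GL_{2} \times \GL_{1}$; the extra $\GL_{1}$-component of $G$ acts trivially on $V_{\lambda}$.

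The first step is to invoke the branching rule for $\GL_{3} \downarrow \GL_{2}$. For $\lambda = (\lambda_{1}, \lambda_{2}, \lambda_{3})$ dominant,
\[
V_{\lambda}\big\vert_{\GL_{2}} \cong \bigoplus_{\lambda_{1} \geq \mu_{1} \geq \lambda_{2} \geq \mu_{2} \geq \lambda_{3}} V^{\GL_{2}}_{(\mu_{1}, \mu_{2})},
\]
with each summand appearing once (cf.\ \cite[\S 8]{goodmanwallach}). The lower-right $\GL_{1}$ commutes with the upper-left $\GL_{2}$, so it acts on each multiplicity-one isotypic summand by a character. We identify this character by a weight count. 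The torus $T$ acts on a Gelfand--Tsetlin vector with weight $(\ast, \ast, |\lambda| - |\mu|)$, so the third entry is $e^{|\lambda| - |\mu|}$. Hence the summand $V^{\GL_{2}}_{(\mu_{1}, \mu_{2})}$ is the $L_{2}$-representation of weight $(\mu_{1}, \mu_{2}; |\lambda| - \mu_{1} - \mu_{2})$.

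Next, specialise to $\lambda = (a, 0, -a)$. The interlacing conditions become $a \geq \mu_{1} \geq 0 \geq \mu_{2} \geq -a$. Writing $\mu_{1} = j$ and $\mu_{2} = -i$ with $0 \leq i, j \leq a$, the $\GL_{1}$-weight is $0 - j + i = i - j$. This gives exactly the summands $V^{H}_{(j, -i; i - j)}$ of Theorem \ref{thm:branching}, each with multiplicity one. As a sanity check, the dimensions agree: $\sum_{i, j}(i + j + 1) = (a + 1)^{3}$, which equals $\dim V_{(a, 0, -a)}$ by the Weyl dimension formula. Finally, the identification of weights of $H$ with triples $(r, s; t)$ is compatible with the identification of $X^{\bullet}(T_{H})$ with $\Z^{3}$ via $\iota$ fixed above, so no twisting is introduced in passing from $L_{2}$ to $H$.

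The main obstacle is only the bookkeeping of the $\GL_{1}$-character. One must make sure the convention $(r, s; t)$ for $H$ records the weight of the third diagonal entry $e$ under $\iota$, rather than, say, a $\nu_{2}$-twist coming from the $G$-factor. Since $V_{\lambda}$ carries trivial $\GL_{1}$-action in $G$, this is consistent. Everything else is the standard interlacing theorem, which I would cite rather than reprove, either via characters and Schur polynomials or via the Gelfand--Tsetlin basis.
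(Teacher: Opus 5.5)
Your proposal is correct and is essentially the paper's argument. The paper simply cites the classical $\GL_{3} \downarrow \GL_{2}$ interlacing branching rule from Goodman--Wallach. Your specialisation $a \geq j \geq 0 \geq -i \geq -a$, together with the weight count giving the lower-right $\GL_{1}$-weight $|\lambda| - |\mu| = i - j$ (which is compatible with the identification $X^{\bullet}(T_{H}) \cong \Z^{3}$ via $\iota$), is exactly the bookkeeping that the citation leaves implicit.
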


Deftly wielding $\nu_{1}$ and $Q_{H}^{0}$ like a pair of salad tongs, we can isolate the highest weight vectors of the summands with $i = 0$.

\begin{proposition} \label{prop:faj}
    For each $0 \leq j \leq a$ there is a vector
    \[
        f^{a, j} \in \left(V_{\lambda} \otimes \nu_{1}^{-j}\right)^{Q_{H}^{0}},
    \]
    unique up to scaling. The vector $f^{a, j}$ is the image of a highest weight vector of $V^{H}_{(j, 0; -j)}$ under the branching law.
\end{proposition}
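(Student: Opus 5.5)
We use the branching law of Theorem \ref{thm:branching} to reduce the invariant computation to one vector in each summand. The group $Q_{H}^{0}$ lies in $H$, so
\[
\left(V_{\lambda}\otimes\nu_{1}^{-j}\right)^{Q_{H}^{0}} = \bigoplus_{0\le i,k\le a}\left(V^{H}_{(k,-i;i-k)}\otimes\nu_{1}^{-j}\right)^{Q_{H}^{0}},
\]
and it suffices to determine the $Q_{H}^{0}$-invariants of each irreducible $H$-representation twisted by $\nu_{1}^{-j}$.

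Write $Q_{H}^{0}=N_{H}\rtimes T^{0}$, where $T^{0}=\{\mathrm{diag}(x,1)\times(z)\}$. An $N_{H}$-invariant vector in an irreducible $H$-representation is a highest weight vector, and these are unique up to scalar. So each summand contributes at most a line, spanned by its highest weight vector $v$. We then check when $T^{0}$ fixes $v\otimes 1$.

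For the summand $V^{H}_{(k,-i;i-k)}$, the element $\mathrm{diag}(x,1)\times(z)$ acts on $v$ by $x^{k}z^{i-k}$. Since $\nu_{1}$ of this element is $x/z$, the twist $\nu_{1}^{-j}$ contributes $x^{-j}z^{j}$. The total character is therefore $x^{k-j}z^{i-k+j}$. It is trivial exactly when $k=j$ and $i=0$.

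Thus exactly one summand, $V^{H}_{(j,0;-j)}$, contributes, and it contributes a single line. This gives both the existence and the uniqueness up to scaling of $f^{a,j}$, and shows that $f^{a,j}$ is the image of a highest weight vector of $V^{H}_{(j,0;-j)}$. The condition $0\le j\le a$ guarantees that this summand actually occurs in the decomposition.

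The step needing the most care is matching conventions. One must check that the parameter $t$ in $V^{H}_{(r,s;t)}$ is the weight of the $\GL_{1}$-factor $z$ of $H$, and that the highest weight is taken with respect to the upper-triangular $B_{H}$, so that the $N_{H}$-invariants really are the highest weight line. This can be verified by restricting $\chi_{3}$ through $\iota$: $\iota$ places $z$ in the $(3,3)$ entry, so $\chi_{3}$ pulls back to the $z$-coordinate. One also uses that the $\GL_{1}$ factor of $G$ acts trivially on $V_{\lambda}$, which is consistent with the summands having weights summing to $0$.

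If one prefers to avoid the branching theorem's normalizations, there is an alternative. Lemma \ref{lem:opencell} shows that $Q_{H}^{0}$ has an open orbit on $\bar P_{1}^{G}\backslash G$ with stabilizer inside $\ker\nu_{1}$. Realizing $V_{\lambda}$ as algebraic induction from $\bar P_{1}$, a $Q_{H}^{0}$-eigenfunction is determined by its value at $u^{-1}$. This gives multiplicity at most one, and existence follows from the branching computation above.
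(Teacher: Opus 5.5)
Your argument is correct and is the same as the paper's, whose proof simply cites the branching law of Theorem \ref{thm:branching}. You have filled in the check it leaves implicit: $N_{H}$-invariants in each summand form the highest weight line, and $\mathrm{diag}(x,1)\times(z)$ acts on it by $x^{k-j}z^{i-k+j}$, which singles out $V^{H}_{(j,0;-j)}$. One small caution on your optional alternative: $f(u^{-1})$ lies in the $(a+1)$-dimensional $W_{\lambda}$, so multiplicity one also needs the stabiliser $\{\mathrm{diag}(x,1,x)\times(x)\}$ to cut the possible values down to the lowest weight line, as in Lemma \ref{lem:normalise}.
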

\begin{proof}
   Existence and uniqueness follows from Theorem \ref{thm:branching}.
\end{proof}

For $0 \leq j \leq a$ these vectors define
$\Z_{p}$-linear, $Q_{H}^{0}$-equivariant maps
\begin{align*}
    \Zp &\to \left(V_{\lambda} \otimes \nu_{1}^{-j}\right)^{Q_{H}^{0}}, \\
    1 &\mapsto f^{a, j}.
\end{align*}
and we define 
\[
    \mathrm{br}^{[a, j]}: V^{H}_{(j, 0; -j)} \to V_{\lambda}
\]
to be the unique $H$-equivariant map sending a fixed highest weight vector $v_{j} \in  V^{H}_{(j, 0; -j)}$ to $f^{a, j}$.
In the following section we choose integral normalisations of the vectors $f^{a, j}$ suitable for $p$-adic interpolation. 

\subsubsection{The Borel--Weil presentation and normalisation of the $f^{a, j}$} \label{sec:BW}

Since $T_{H} = T$, the weight $\lambda = (a, 0, -a)$ determines an irreducible representation of the Levi $L_{1} = \GL_{1} \times \GL_{2}$, with maximal admissible lattice
\[
    W_{\lambda, \Zp} :=\mathrm{det}^{a} \boxtimes \mathrm{Sym}^{a}\Zp^{2} \otimes \mathrm{det}^{-a}.
\]
By the theorem of Borel--Weil we can identify
\begin{equation} \label{eq:BWmodel}
    V_{\lambda, \Zp} \otimes \nu_{1}^{-j} = \left\{f: G_{/\Zp} \to \mathbb{A}^{1}_{\Zp} \otimes W_{\lambda, \Zp}: f(\bar{n}\ell g) = \ell \cdot f(g)\nu_{1}^{-j} (\ell)\ \forall \bar{n} \in \bar{N}_{1}, \ell \in L_{1}^{G}\right\},
\end{equation}
for all $j \geq 0$, with $G$-action $(\gamma \cdot f)(g) = f(g\gamma)$. 

 Under Borel--Weil the elements $f^{a, j} \in V_{\lambda}$ defined in Proposition \ref{prop:faj} correspond to functions $G \to \mathbb{A}^{1}_{\Qp} \otimes W_{\lambda}$ satisfying $f^{a, j}(gq) = f^{a, j}(g)$ for $g \in G, q \in Q_{H}^{0}$.
By Lemma \ref{lem:opencell} such a function is uniquely determined by its restriction to the open cell $U_{\mathrm{Sph}} = \bar{P}^{G}_{1}u^{-1}Q_{H}^{0}$, i.e.\ by the single non-zero value $f^{a, j}(u^{-1}) \in W_{\lambda}$.

\begin{lemma} \label{lem:normalise}
    For every $0 \leq j \leq a$, the vector $f^{a, j}(u^{-1})$ is a non-zero element of the (one-dimensional) lowest weight space of $W_{\lambda}$. In particular, the line $\Qp \cdot f^{a,j}(u^{-1})$ is independent of $j$.
\end{lemma}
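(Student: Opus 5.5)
The plan is to use the stabiliser computed in Lemma \ref{lem:opencell}: the value $f^{a,j}(u^{-1})$ must be fixed by a one-parameter torus in $L_1^G$, and that torus has only the lowest weight space of $W_\lambda$ as its fixed vectors. Non-vanishing will follow from the open-cell uniqueness already noted before the statement.

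\emph{Step 1 (equivariance at the base point).} Let $q = \mathrm{diag}(x,1,x)\times(x) \in Q_H^0 \cap u\bar{P}^G_1u^{-1}$, as in Lemma \ref{lem:opencell}. Specialising \eqref{eq:uconj} to $y=0$, $z=x$ gives
\[
u^{-1}qu = \ell := \mathrm{diag}(x,x,1)\times(x),
\]
which lies in $L_1^G$. Since $f^{a,j}$ is right $Q_H^0$-invariant and satisfies the Borel--Weil transformation law \eqref{eq:BWmodel}, we get
\[
f^{a,j}(u^{-1}) = f^{a,j}(u^{-1}q) = f^{a,j}(\ell u^{-1}) = \nu_1^{-j}(\ell)\,\ell\cdot f^{a,j}(u^{-1}).
\]
We have $\nu_1(\ell) = x^2/x^2 = 1$. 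Hence $f^{a,j}(u^{-1})$ is fixed by $\ell$ for all units $x$. This holds over the generic point, so as an identity of polynomial functions in $x$.

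\emph{Step 2 (weight computation).} The $\GL_1$-factor of $G$ acts trivially on $W_\lambda$, so $\ell$ acts through $\mathrm{diag}(x,x,1)\in T$. The weights of $W_\lambda = \det^a\boxtimes \mathrm{Sym}^a\otimes\det^{-a}$ are $(a, m-a, -m)$ for $0\le m\le a$, each with multiplicity one. On the weight-$(a,m-a,-m)$ line, $\mathrm{diag}(x,x,1)$ acts by $x^{a+(m-a)} = x^m$. The fixed vectors are therefore exactly the weight space with $m=0$, namely weight $(a,-a,0)$. This is the lowest weight of $W_\lambda$ with respect to the upper-triangular Borel of $L_1$, and it is one-dimensional. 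So $f^{a,j}(u^{-1})$ lies in the lowest weight line.

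\emph{Step 3 (non-vanishing and independence of $j$).} By Lemma \ref{lem:opencell}, $U_{\mathrm{Sph}}$ is open, hence dense in the irreducible $G_{/\Qp}$. The function $f^{a,j}$ is determined on $U_{\mathrm{Sph}}$ by $f^{a,j}(u^{-1})$ via $f(\bar p u^{-1} q) = \bar p\cdot f(u^{-1})$. Here $\bar p$ acts through its $L_1^G$-component, twisted by $\nu_1^{-j}$, and this is well defined by Step 1. If $f^{a,j}(u^{-1})$ were zero, then $f^{a,j}$ would vanish on a dense open subset and hence identically, contradicting Proposition \ref{prop:faj}. The lowest weight line does not depend on $j$, which gives the final claim.

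The only delicate point is the bookkeeping of conventions: which Borel of $L_1$ makes $(a,-a,0)$ "lowest", and checking that the twist $\nu_1^{-j}$ really is trivial on $\ell$. That twist is exactly what makes the conclusion independent of $j$. I expect Step 2's sign and weight check to be the main place to be careful, but it reduces to the short computation above.
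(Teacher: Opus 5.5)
Your proposal is correct and follows the same route as the paper. The paper's proof likewise reduces the lemma to showing that $f^{a,j}(u^{-1})$ is fixed by the torus $\mathrm{diag}(t,t,1)$, whose only fixed vectors in $W_\lambda$ form the lowest weight line of weight $(a,-a,0)$. Your Step 1 (conjugating the stabiliser $\mathrm{diag}(x,1,x)\times(x)$ from Lemma \ref{lem:opencell} by $u$ via \eqref{eq:uconj}, and checking that $\nu_1^{-j}$ is trivial on it) is exactly the ``easy computation'' the paper leaves to the reader. Your density argument in Step 3 supplies the non-vanishing that the paper only asserts in the paragraph preceding the lemma.
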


\begin{proof}
    The weights of $W_{\lambda}$ are of the form $(a,  - k,  k - a)$ for $0 \leq k \leq a$ with lowest weight $(a, -a, 0)$, so it suffices to show that $f^{a, j}(u^{-1})$ is fixed by the torus $\begin{psmallmatrix}
        t & & \\ & t & \\
         & & 1
    \end{psmallmatrix}$ which is an easy computation. 
\end{proof}

We now and forever normalise the $f^{a, j}$ so that
\begin{equation} \label{eq:normalisation}
    f^{a, j}(u^{-1}) = w_{\lambda}^{-} \in W_{\lambda, \Zp} \quad \text{for all } 0 \leq j \leq a,
\end{equation}
where $w_{\lambda}^{-}$ is a fixed generator of the lowest weight space of $W_{\lambda, \Zp}$. A simple adaptation of the argument of \cite[Proposition 3.2.6]{loeffler2021spherical} shows that with this normalisation $f^{a, j} \in V_{\lambda, \Zp}$.

\subsubsection{The normalised $\tau$-action and the congruence relation}
\label{sec:normtau}
 The character $\nu_{1}^{j}$ extends to a map $\nu_{1}^{j}: \bar{P}^{G}_{1}N_{1} \to \mathbb{G}_{m}$ sending $\bar{n} \ell n \mapsto \nu_{1}^{j}(\ell)$ for $\bar{n} \in \bar{N}_{1}, \ell \in L^{G}_{1}, n \in N_{1}$ and the product $\nu_{1}^{j}f^{a, j}$ is an element of $V_{\lambda, \Zp}$. We now show the vectors $\nu_{1}^{j}f^{a, j}$ satisfy a congruence as $j$ varies. This is the crucial relation which allows us to construct tempered $p$-adic distributions.

Recall $\tau := \tau_{1} = \begin{psmallmatrix} p & & \\ & 1 & \\ & & 1\end{psmallmatrix} \in \GL_{3}(\Qp)$ (cf.\ Section \ref{sec:heckeopsandprefs} below). Let $\eta_{\circ}: \mathbb{G}_{m} \to \GL_{3}$ denote the cocharacter $x \mapsto \mathrm{diag}(x, 1, 1)$, so $\eta_{\circ}(p) = \tau$. We define a normalised action of $\tau^{-1}$ on $V_{\lambda} \otimes \nu_{1}^{-j}$ by
\begin{equation} \label{eq:staraction}
    \tau^{-1} \ast v := p^{\langle \eta_{\circ}, \lambda - j\nu_{1}\rangle}\, \tau^{-1} \cdot v = p^{a - j}\, \tau^{-1} \cdot v.
\end{equation}
Since $\lambda$ is the highest weight, this action preserves any admissible lattice $V_{\lambda, \Zp} \otimes \nu_{1}^{-j}$ \cite[Lemma 2.3.4]{loeffler2021spherical}. In the Borel--Weil presentation \eqref{eq:BWmodel}, the restriction of $v$ to the big Bruhat cell $\bar{P}^{G}_{1}L_{1}N_{1}$ then satisfies
\[
    (\tau^{-n} \ast v)(\bar{q}n) = v(\bar{q}\,\tau^{n} n \tau^{-n}) \qquad \bar{q} \in \bar{P}^{G}_{1}, \ n \in N_{1}.
\]

\begin{proposition} \label{prop:cong}
    For all $0 \leq k \leq a$ and all $n \geq 1$ we have
    \[
        \sum_{j = 0}^{k}\binom{k}{j}(-1)^{j}\, \tau^{-n} \ast \left(u^{-1} \cdot \nu_{1}^{j}f^{a, j}\right) \equiv 0 \bmod p^{nk}.
    \]
\end{proposition}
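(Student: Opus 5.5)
The plan is to work entirely in the Borel--Weil model \eqref{eq:BWmodel} on the big cell and to rewrite the whole family $\nu_{1}^{j}f^{a,j}$, $0 \leq j \leq a$, as a single function multiplied by the $j$-th power of one scalar function $R$. Since $\nu_{1}^{j}f^{a,j}$ lies in $V_{\lambda,\Zp}$ (the twist cancels), I would regard every term of the sum as an element of the single lattice $V_{\lambda,\Zp}$. On that lattice the normalised action is $\tau^{-1}\ast v = p^{a}\tau^{-1}\cdot v$, so the factor $p^{na}$ is the same for every $j$, and the claim reduces to a divisibility statement about functions restricted to $\bar{P}^{G}_{1}N_{1}$. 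It is worth checking this bookkeeping first, since a $j$-dependent power of $p$ would spoil the binomial sum.

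\textbf{Step 1: a formula for $f^{a,j}$ on the open cell.} Write a point of $U_{\mathrm{Sph}}$ as $g = \bar{p}u^{-1}q$ with $\bar{p} \in \bar{P}^{G}_{1}$ and $q \in Q^{0}_{H}$. This factorisation is well defined modulo $Q^{0}_{H}\cap u\bar{P}^{G}_{1}u^{-1}$, and by Lemma \ref{lem:opencell} that intersection lies in $\ker\nu_{1}$. By $Q^{0}_{H}$-invariance, equivariance and the normalisation \eqref{eq:normalisation},
\[
f^{a,j}(g) = \nu_{1}^{-j}(\bar{p})\,\ell(\bar{p})\cdot w^{-}_{\lambda},
\]
where $\ell(\bar{p})$ is the Levi part of $\bar{p}$. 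Hence
\[
\nu_{1}^{j}f^{a,j} = F\cdot R^{j}, \qquad F := f^{a,0}, \qquad R(g) := \frac{\nu_{1}(g)}{\nu_{1}(\bar{p})},
\]
where $\nu_{1}(g)$ means the extension to $\bar{P}^{G}_{1}N_{1}$ from \S\ref{sec:normtau}. The function $R$ is well defined precisely because the stabiliser lies in $\ker\nu_{1}$. The binomial theorem then gives
\[
\sum_{j=0}^{k}\binom{k}{j}(-1)^{j}\,\nu_{1}^{j}f^{a,j} = F\cdot(1-R)^{k}.
\]
This identity holds on a dense open set, hence everywhere, since both sides are regular where defined.

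\textbf{Step 2: translate and evaluate.} Apply $u^{-1}$, which replaces $g$ by $gu^{-1}$, and then $\tau^{-n}\ast$. By the displayed formula before the proposition, the result at $\bar{q}n$ is $F(\bar{q}\,\tau^{n}n\tau^{-n}u^{-1})\cdot(1-R)^{k}(\bar{q}\,\tau^{n}n\tau^{-n}u^{-1})$. The first factor is exactly $\tau^{-n}\ast(u^{-1}\cdot f^{a,0})$, which is integral because $f^{a,0} \in V_{\lambda,\Zp}$ and the $\ast$-action preserves the lattice (\cite[Lemma 2.3.4]{loeffler2021spherical}).

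For the second factor, note that $\tau^{n}n\tau^{-n}$ multiplies the two coordinates of $N_{1}$ by $p^{n}$. The function $1-R$ vanishes identically on $\bar{P}^{G}_{1}u^{-1}$, i.e.\ at $n = 1$: there $g = \bar{p}u^{-1}$, so $R(g) = \nu_{1}(\bar{p})/\nu_{1}(\bar{p}) = 1$. So if $1-R(\bar{q}\,n\,u^{-1})$ is a polynomial in the $N_{1}$-coordinates with coefficients in the integral functions of $\bar{q}$, its constant term is zero and substituting $p^{n}$ times the coordinates gives a multiple of $p^{n}$. Raising to the $k$-th power gives divisibility by $p^{nk}$, as required.

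\textbf{Main obstacle: integrality of $R$.} The hard step is proving that $R(\bar{q}\,n\,u^{-1})$ really is integral, with no denominators coming from $\nu_{1}(\bar{p})$. I would first try to compute it explicitly. Solve $\bar{q}\,n\,u^{-1} = \bar{p}\,u^{-1}q$ by writing $\bar{q}\,n = \bar{p}\,(u^{-1}qu)$ and using the explicit conjugate \eqref{eq:uconj}, whose lower-right block has the simple form $\mathrm{diag}(z,1)$. I expect this to show that $\nu_{1}(\bar{p})/\nu_{1}(\bar{q})$ is a unit times $1$ plus a linear form in the entries of $n$, so that $1-R$ is visibly $\mathcal{O}(p^{n})$ after rescaling.

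If that ratio turns out to be only a rational function, I would fall back on working with the whole product $F\cdot(1-R)^{k}$ rather than $R$ alone. This product is a regular $W_{\lambda}$-valued polynomial on the cell, and it vanishes to order at least $k$ along $n = 1$. Expanding it in the $N_{1}$-coordinates and combining the weight bookkeeping of \eqref{eq:staraction} with the fact that admissible lattices are stable under the $\ast$-action, the monomials of total degree at least $k$ pick up the factor $p^{nk}$.
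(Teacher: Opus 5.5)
Your proposal is correct and is essentially the paper's argument. Your $R$ is exactly $\nu_{1}(q)$, since $\nu_{1}$ is a character of $G$ and $\nu_{1}(u)=1$, and the paper likewise reduces the sum to $(1-\nu_{1}(q))^{k}\,(\ell'\cdot w_{\lambda}^{-})$; it gets $\nu_{1}(q)\equiv 1 \bmod p^{n}$ because the reduction of $q$ modulo $p^{n}$ lies in $Q_{H}^{0}\cap u\bar{P}^{G}_{1}u^{-1}\subset\ker\nu_{1}$, and it passes from values to membership in $p^{nk}V_{\lambda,\Zp}$ by a density argument over $W(\bar{\F}_{p})$-points, a step you leave implicit. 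Your explicit route also closes the step you flagged as the main obstacle: solving via \eqref{eq:uconj} gives $z=x(1-n_{12})$ and $y=-xn_{13}$, hence $R(\bar{q}\,n\,u^{-1})=(1-n_{12})^{-1}$, which is rational rather than polynomial but has the unit denominator $1-p^{n}n_{12}$ after rescaling, so $1-R=-p^{n}n_{12}/(1-p^{n}n_{12})$ is divisible by $p^{n}$.
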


\begin{proof}
     Let $R = W(\bar{\F}_{p})$ be the ring of Witt vectors of $\bar{\F}_{p}$ and $U = \bar{P}_{1}^{G}u^{-1}Q_{H}^{0}$. As before we model $V_{\lambda, \Zp}$ as a space of regular functions on $G_{/\Zp}$ taking values $W_{\lambda, \Zp}$, so that each $\nu_{1}^{j}f^{a, j}$ is uniquely determined by its restriction to the Zariski open set $U(R)$. Moreover, for $\bar{n}\ell \in \bar{P}_{1}^{G}(R)$, $q \in Q_{H}^{0}(R)$ we have 
$$
    \nu_{1}^{j}f^{a,j}(\bar{n}\ell u^{-1} q) = \left(\ell \cdot \nu_{1}^{j}f^{a,j}(u^{-1})\right)\nu_{1}^{j}(q) =
    \nu_{1}^{j}(q)\left(\ell \cdot w^{-}_{\lambda}\right).
$$
 Let $n \in N_{1}(R)$, and note that for any $m \geq 1$ (cf. \cite[Lemma 3.4]{rockwood2022spherical}) we have $\bar{n}\ell\tau^{m}n\tau^{-m}u^{-1} \in U(R)$, so we write $\bar{n}\ell\tau^{m}n\tau^{-m}u^{-1}= \bar{n}'\ell'u^{-1}q$. We then have for any $ 0 \leq k \leq a$
\begin{align*}
\sum_{j = 0}^{k}\binom{k}{j}(-1)^{j}\tau^{-n} * u^{-1} \nu_{1}^{j}f^{a,j}(\bar{n}\ell n) &= \sum_{j = 0}^{k}\binom{k}{j}(-1)^{j}\nu_{1}^{j}f^{a,j}(\bar{n}\ell \tau^{n}n\tau^{-n}u^{-1}) \\
&= \sum_{j = 0}^{k}\binom{k}{j}(-1)^{j}\nu_{1}^{j}f^{a,j}(\bar{n}'\ell' u^{-1}q) \\
&=\left(\sum_{j = 0}^{k}\binom{k}{j}(-1)^{j}\nu_{1}^{j}(q)\right)\left(\ell' \cdot w_{\lambda}^{-}\right)\\
&= \left(1 - \nu_{1}(q)\right)^{k}\left(\ell' \cdot w_{\lambda}^{-}\right),
\end{align*}
 Now reducing modulo $p^{n}$ we have that $q \in Q_{H}^{0} \cap u\bar{P}_{1}^{G}u^{-1} = \{\begin{psmallmatrix}
x & & \\ & 1 & \\ & & x
\end{psmallmatrix} \times (x)\} \subset \mathrm{ker}(\nu_{1})$, so $\left(1 - \nu_{1}(q)\right)^{k} \equiv \ 0 \ \mathrm{mod} \ p^{nk}$. Since the big Bruhat cell $U_{\mathrm{Bru}} := \bar{N} \times L_{1}^{G} \times N$ is smooth and $\bar{\F}_{p}$-points are dense we have an injective map 
\[
   \mathcal{O}_{G_{/\Zp}}(G_{/\Zp})/p^{nk} \hookrightarrow \prod_{x \in U_{\mathrm{Bru}}(R/p^{nk})}R/p^{nk}.
\]
Our computation shows that $\sum_{j = 0}^{k}\binom{k}{j}(-1)^{j}\tau^{-n} * u^{-1} \nu_{1}^{j}f^{a,j}$ maps to zero under this map and thus must vanish modulo $p^{nk}$.  
\end{proof}

\begin{remark} \label{rem:conggeneral}
    Proposition \ref{prop:cong} does not follow directly from the general congruences for branching maps of spherical pairs established in \cite[\S 7]{rockwood2022spherical} since the representation $V_{\lambda, \Zp}$ is induced from a representation of $L_{1}$ of dimension $> 1$.
\end{remark}

\section{Automorphic representations} \label{sec:auto}

\subsection{Characters} \label{sec:characters}

Given a Dirichlet character $\chi: \left(\Z/N\Z\right)^{\times} \to \C^{\times}$ there is a unique finite-order Hecke character $\hat{\chi}: \Q^{\times} \backslash \A^{\times}/\R^{\times}_{> 0} \to \C^{\times}$ such that $\hat{\chi}(\varpi_{\ell}) = \chi(\ell)$ for all primes $\ell \nmid N$, where $\varpi_{\ell}$ is a uniformiser at $\ell$. Conversely, every finite-order Hecke character is $\hat{\chi}$ for a unique primitive Dirichlet character $\chi$. Note that the restriction of $\hat{\chi}$ to $\hat{\Z}^{\times} \subset \A_{f}^{\times}$ is the \emph{inverse} of the composite $\hat{\Z}^{\times} \twoheadrightarrow (\Z/N\Z)^{\times} \xrightarrow{\chi} \C^{\times}$. Given a representation $\Pi$ of $\GL_{3}(\A)$ we write $\Pi \times \chi$ for $\Pi \otimes [\hat{\chi} \circ \det]$.

Let $\psi$ be the unique character of the additive group $\A/\Q$ such that $\psi(x) = \exp(-2\pi i x)$ for $x \in \R$; its restriction to $\Q_{\ell}$ maps $1/\ell^{n}$ to $\exp(2\pi i/\ell^{n})$. If $\varepsilon(\hat{\chi}_{\ell}, \psi_{\ell})$ denotes the local $\varepsilon$-factor with respect to the unramified Haar measure $dx$, then
\[
    \prod_{\ell \mid N}\varepsilon(\hat{\chi}_{\ell}, \psi_{\ell}) = G(\chi) := \sum_{a \in \left(\Z/N\Z\right)^{\times}}\chi(a)\exp(2\pi i a/N)
\]
for $\chi$ primitive of conductor $N$, where $G(\chi)$ is the Gauss sum.

Finally, we fix Haar measures: on $H(\A_{f})$ we use the unramified Haar measure $dh_{f}$, normalised so that $H(\hat{\Z})$ has volume $1$. The measure $dh_{\infty}$ at infinity will be fixed in Section \ref{sec:comppair}.

\subsection{Automorphic representations of $\GL_{3}$}

Throughout, $\Pi$ denotes a regular algebraic cuspidal automorphic representation (RACAR) of $\GL_{3}(\A)$, with central character $\omega_{\Pi}$. We identify $\Pi$ with its realisation in $L^{2}_{0}(\GL_{3}(\Q)\backslash\GL_{3}(\A))$. Regular algebraicity means that the infinitesimal character of $\Pi_{\infty}$ agrees with that of an algebraic representation $V_{\lambda}$ for a (uniquely determined) dominant integral weight $\lambda$ which we refer to as the weight of $\Pi$.

\subsubsection{Whittaker models}
We write
\begin{align*}
    \mathcal{W}_{\psi}: \Pi &\xrightarrow{\ \sim\ } \mathcal{W}_{\psi}(\Pi) \subset \mathrm{Ind}_{N_{\GL_{3}}(\A)}^{\GL_{3}(\A)}\psi, \\
    \varphi &\mapsto W_{\varphi}(g) := \int_{N_{\GL_{3}}(\Q)\backslash N_{\GL_{3}}(\A)}\varphi(ng)\psi^{-1}(n)\,dn
\end{align*}
for the standard Whittaker model of $\Pi$, where $\psi\begin{psmallmatrix}
    1 & x & * \\
    & 1 & y \\
    & & 1
\end{psmallmatrix} := \psi(x + y)$. As $\psi$ is fixed throughout we will usually drop it from the notation. From the uniqueness of Whittaker models, $\mathcal{W}(\Pi_{f})$ carries a canonical $E$-structure $\mathcal{W}(\Pi_{f}, E)$ for any field of definition $E$ of $\Pi_{f}$, see \cite[\S 3.3.1]{loefflerWilliams2021p}.

\subsubsection{Cohomological representations} \label{sec:cohomological}
Any RACAR $\Pi$ of weight $\lambda$ is cohomological with coefficients in $V_{\lambda}^{\vee}$, i.e.
\[
    H^{\bullet}(\mathfrak{gl}_{3}, K^{\circ}_{3, \infty}; \Pi_{\infty} \otimes V^{\vee}_{\lambda}(\C)) \neq 0,
\]
where $\mathfrak{gl}_{3} = \mathrm{Lie}(\GL_{3}(\R))$ and $K_{3, \infty}^{\circ} = K_{\GL_{3}, \infty}^{\circ}Z^{\circ}_{\GL_{3}, \infty}$, for $K_{\GL_{3}, \infty}$ a maximal compact subgroup of $\GL_{3}(\R)$ and $Z_{\GL_{3}, \infty}$ the centre of $\GL_{3}(\R)$, with $(-)^{\circ}$ denoting identity components, see \cite[Lemma\ 4.9]{clozel2}. This cohomology is concentrated in degrees $2$ and $3$, and is one-dimensional in each. We let
\[
    \zeta_{\infty} \in H^{2}(\mathfrak{gl}_{3}, K^{\circ}_{3, \infty}; \Pi_{\infty} \otimes V^{\vee}_{\lambda}(\C))
\]
denote a generator (later we will allow ourselves judicious rescaling of this generator).

The weight $\lambda$ is necessarily pure, and we normalise $\Pi$ (twisting by a power of $\|\cdot\|\circ\det$ if necessary) so that $\lambda = (a, 0, -a)$ with $a \in \Z_{\geq 0}$. With this normalisation we have
\begin{equation} \label{eq:piinfty}
    \Pi_{\infty} \cong \mathrm{Ind}_{P_{2}(\R)}^{\GL_{3}(\R)}\left(D_{2a + 3}, \mathrm{id}\right) \quad \text{or} \quad \mathrm{Ind}_{P_{2}(\R)}^{\GL_{3}(\R)}\left(D_{2a + 3}, \mathrm{sgn}\right),
\end{equation}
where $D_{2a+3}$ is the discrete series representation of $\GL_{2}(\R)$ of lowest weight $2a + 3$, the first case occurring when $\omega_{\Pi}$ is odd and the second when it is even. In particular $\omega_{\Pi}$ has finite order (so we identify it with a Dirichlet character, also denoted $\omega_{\Pi}$), and $\Pi$ is unitary. Note that $\Pi_{\infty}$, and hence any quantity depending only on $\Pi_{\infty}$, is determined by $a$ and the sign $\omega_{\Pi}(-1)$.

\subsubsection{Conductors and $L$-functions}
For $N \geq 1$ set
\[
    \mathcal{U}_{1}(N) = \left\{g \in \GL_{3}(\hat{\Z}): g \equiv \begin{psmallmatrix}
        * & * & * \\
        * & * & * \\
        0 & 0 & 1
    \end{psmallmatrix} \bmod N\right\}.
\]
By the local newvector theory of Jacquet--Piatetski-Shapiro--Shalika there is an $N$ for which $\Pi_{f}^{\mathcal{U}_{1}(N)} \neq 0$. If $N_{\Pi}$ is the smallest such $N$ then these invariants are one-dimensional, and we call $N_{\Pi}$ the \emph{conductor} of $\Pi$. We write $N^{(p)}_{\Pi}$ for its prime-to-$p$ part.

We let $L(\Pi, s) = \prod_{\ell < \infty}L(\Pi_{\ell}, s)$ denote the standard $L$-function of $\Pi$ (without Archimedean factors), in the analytic normalisation, so the Euler product converges for $\mathfrak{Re}(s) > 1$. Since $\Pi$ is both $C$- and $L$-algebraic, it is definable over a number field $E$ (its \emph{field of rationality}), and $L(\Pi_{\ell}, s) = P_{\ell}(\ell^{-s})^{-1}$ with $P_{\ell}(X) \in E[X]$ of degree $\leq 3$. For unramified $\ell$ we write $P_{\ell}(X) = (1 - \alpha_{\ell}X)(1 - \beta_{\ell}X)(1 - \gamma_{\ell}X)$, where the Satake parameters $\alpha_{\ell}, \beta_{\ell}, \gamma_{\ell}$ are units outside $\ell$ and have $\ell$-adic valuation $\geq -1-a$. We write $L^{(p)}(\Pi \times \eta, s)$ for the $L$-function without its Euler factor at $p$.

\subsubsection{Critical values} \label{sec:critvalues}
For a Dirichlet character $\eta$, the critical values of $L(\Pi \times \eta, s)$ are at $s = t$ for integers $t$ with either $-a \leq t \leq 0$ and $(-1)^{t} = \omega_{\Pi}\eta(-1)$, or $1 \leq t \leq 1 + a$ and $(-1)^{t} = -\omega_{\Pi}\eta(-1)$ \cite[Proposition\ 2.4]{loefflerWilliams2021p}. We thus set
\begin{align*}
    \Crit^{-}(\Pi) &= \{(-j, \eta): 0 \leq j \leq a, \ (-1)^{j} = \omega_{\Pi}\eta(-1)\}, \\
    \Crit^{+}(\Pi) &= \{(j + 1, \eta): 0 \leq j \leq a, \ (-1)^{j} = \omega_{\Pi}\eta(-1)\},
\end{align*}
the critical values in the left and right halves of the critical strip, and write $\Crit^{\pm}_{p}(\Pi) \subset \Crit^{\pm}(\Pi)$ for the subsets where $\eta$ has $p$-power conductor. 

\subsubsection{Rationality and the factor at infinity} \label{sec:einfty}
Following \cite[Definition\ 2.6]{loefflerWilliams2021p} (cf.\ \cite[\S 1]{coates89}), for $(-j, \eta) \in \Crit^{-}(\Pi)$ define the modified Euler factor at infinity
\[
    e_{\infty}(\Pi_{\infty} \times \eta_{\infty}, -j) := i^{j - a - 1}\,\Gamma_{\C}(a + 1 - j) = 2\,(2\pi i)^{j - a - 1}\,\Gamma(a + 1 - j),
\]
where $\Gamma_{\C}(s) = 2(2\pi)^{-s}\Gamma(s)$. For $(j + 1, \eta) \in \Crit^{+}(\Pi)$ define
\[
    e_{\infty}(\Pi_{\infty} \times \eta_{\infty}, j + 1) := (-i)^{-j - a - 2}\,\Gamma_{\C}(a + 2 + j) \cdot \frac{\Gamma_{\R}(j + 1 + \epsilon)}{(-i)^{\epsilon}\,\Gamma_{\R}(\epsilon - j)},
\]
where $\Gamma_{\R}(s) = \pi^{-s/2}\Gamma(s/2)$ and $\epsilon \in \{0, 1\}$ with $\epsilon \equiv j + 1 \bmod 2$.

Work of Mahnkopf \cite{mahnkopf98, mahnkopf00} and Kasten--Schmidt \cite{kastenschmidt13} shows that there is a complex period $\Omega^{-}_{\Pi} \in \C^{\times}$ and, for each $j$, an inexplicit non-zero scalar $\tilde{e}_{\infty}(\Pi_{\infty} \times \eta_{\infty}, -j) \in \C^{\times}$ such that
\[
    \tilde{e}_{\infty}(\Pi_{\infty} \times \eta_{\infty}, -j)\,\frac{L(\Pi \times \eta, -j)}{\Omega^{-}_{\Pi}} \in E[\eta, G(\eta)]
\]
for all $(-j, \eta) \in \Crit^{-}(\Pi)$. The main theorem of \cite{loefflerWilliams2021p} includes the statement that, after a suitable normalisation of $\zeta_{\infty}$, one has $\tilde{e}_{\infty} = e_{\infty}$. We will inherit this in Section \ref{sec:padicL}.

\subsection{Hecke operators and $P$-refinements} \label{sec:heckeopsandprefs}

Let $p$ be a prime, and recall the maximal parabolics $P_{1}, P_{2} \subset \GL_{3}$ from Section \ref{sec:algp}. For $i = 1, 2$, let $\mathcal{J}_{p, i}$ denote the parahoric subgroup of $\GL_{3}(\Zp)$ associated to $P_{i}$. For any open compact $K_{p} \subset \mathcal{J}_{p, i}$ admitting an Iwahori decomposition with respect to $P_{i}$ (see Definition \ref{def:iwahoridecomp}) we have normalised Hecke operators
\[
    U_{p, i} = p^{a}\left[K_{p}\tau_{i}K_{p}\right], \qquad \tau_{1} = \begin{psmallmatrix}
        p & & \\
        & 1 & \\
        & & 1
    \end{psmallmatrix}, \quad \tau_{2} = \begin{psmallmatrix}
        p & & \\
        & p & \\
        & & 1
    \end{psmallmatrix} \in \GL_{3}(\Qp),
\]
acting on $\Pi_{p}^{K_{p}}$. The normalisation factor $p^{a}$ ensures that these operators preserve integral lattices in Betti cohomology (cf.\ Section \ref{sec:heckecoh}).

As in \cite{loefflerWilliams2021p}, we denote by $J_{P_{i}}(\Pi_{p})$ the unitarily normalised Jacquet module of $\Pi_{p}$ at $P_{i}$, a smooth admissible representation of $(\GL_{i} \times \GL_{3 - i})(\Qp)$.

\begin{definition}
    For $i = 1, 2$, a \emph{$P_{i}$-refinement} of $\Pi_{p}$ is a choice of irreducible representation $\sigma_{p} \times \sigma_{p}'$ of $(\GL_{i} \times \GL_{3-i})(\Qp)$ appearing as a subrepresentation of $J_{P_{i}}(\Pi_{p})$.
\end{definition}

\begin{remark}
    This is the natural generalisation of an Iwahori refinement of $\GL_{n}$, which consists of a choice of character of the torus $T$ appearing in $J_{B}(\Pi_{p})$ (in the unramified case, a full ordering of the Satake parameters).
\end{remark}

We write $\tilde{\Pi}$ for a pair $(\Pi, \sigma_{p})$ of $\Pi$ together with a $P_{i}$-refinement; for a given $\Pi_p$ and $i$, each of $\sigma_p$, $\sigma_p'$ determines the other, so we usually specify only $\sigma_{p}$, and we say the refinement is \emph{unramified} if $\sigma_{p}$ is. Fix an embedding $E \hookrightarrow \bar{\Q}_{p}$.

\begin{definition} \label{def:slope}
    The \emph{slope} of the $P_{i}$-refinement $\sigma_{p} \times \sigma'_{p}$ is $v_{p}(\omega_{\sigma_{p}}(p))$, where $\omega_{\sigma_{p}}$ is the central character of $\sigma_{p}$.
\end{definition}

For a RACAR of weight $(a, 0, -a)$, the slope of any $P_{i}$-refinement lies in the interval $[-1 - a, 1 + a]$. The key assumption of \cite{loefflerWilliams2021p} was that $\tilde{\Pi}$ is \emph{nearly ordinary}: that the slope is exactly $-1 - a$, the minimal value. This is the assumption we remove in this article, replacing it by the following.

\begin{definition} \label{def:smallslope}
    We say the $P_{i}$-refinement $\sigma_{p} \times \sigma'_{p}$ has \emph{small slope} if its slope is $< 0$; equivalently, if
    \[
        v_{p}\left(p^{a + 1}\,\omega_{\sigma_{p}}(p)\right) < a + 1.
    \]
\end{definition}

As in the $\GL_{2}$ case, where one usually imposes a $p$-regularity assumption that the $U_{p}$-eigenvalues be distinct, we impose:

\begin{definition}
    A $P_{i}$-refinement $\sigma_{p} \times \sigma'_{p}$ is \emph{regular} if all irreducible subquotients of $J_{P_{i}}(\Pi_{p})/(\sigma_{p} \times \sigma'_{p})$ have central characters different from that of $\sigma_{p} \times \sigma'_{p}$.
\end{definition}
\begin{remark}
    Regularity is automatic for nearly ordinary refinements; at finite slope we impose it throughout, so that the relevant Hecke eigenspaces are genuine (not merely generalised) eigenspaces.
\end{remark}
\begin{example}
In the style of \cite[Example 2.13]{loefflerWilliams2021p}, we briefly recall the classification of generic representations of  $\mathrm{GL}_{3}(\Qp)$ and describe which of their regular refinements are of small slope.
\begin{itemize}
    \item If $\Pi_p$ is supercuspidal, it admits no $P_i$ refinements.
    \item If $\Pi_p= \mathrm{St}_3\otimes \lambda$ is a twist of the Steinberg representation by a character $\lambda$, it admits a unique $P_1$ and a unique $P_2$ refinement, both of which have slope -1 and are thus of small slope for all $a.$
    \item If $\Pi_p$ is parabolically induced from a representation of the form $\theta\times\pi$ of $\mathrm{GL}_1\times \mathrm{GL_2},$ where $\theta$ is a character and $\pi$ is a supercuspidal representation of $\mathrm{GL_2},$ then its unique $P_1$ refinement is $\theta$ and its unique $P_2$ refinement is $\pi,$ both of which are regular. Their slopes are $v_p(\theta(p))$ and $-v_p(\theta(p))$ respectively, so the former is of small slope if and only if $v_p(\theta(p))< 0$ and the latter if and only if $v_p(\theta(p))>0.$
    \item If $\Pi_p$ is irreducibly induced from $\theta\times(\mathrm{St}_2\otimes\lambda)$, then it has two $P_1$-refinements, $\theta$ and $\lambda|\cdot|^{1/2}$, of slopes $v_p(\theta(p))$ and $-1/2\bigl(1+v_p(\theta(p))\bigr)$, and two
    $P_2$-refinements, $\mathrm{St}_2\otimes\lambda$ and $\mathrm{Ind}(\theta\otimes\lambda|\cdot|^{1/2})$, of slopes $-v_p(\theta(p))$ and $1/2\bigl(v_p(\theta(p))-1\bigr),$ all four of which are regular. Thus, it has at least one $P_1$ and one $P_2$ refinement of small slope.
    \item If $\Pi_p$ is an irreducible principal series representation,
    induced from a character $\chi_1\times\chi_2\times\chi_3$, then its $P_1$-refinements are the $\chi_i$, of slope $v_p(\chi_i(p))$, and its $P_2$-refinements are the ${\mathrm{Ind}(\chi_j\otimes\chi_k)}$, of slope $-v_p(\chi_i(p))$, where $\{i,j,k\}=\{1,2,3\}.$ The regular ones are the ones for which $\chi_i\neq\chi_j,\chi_k.$ $\Pi_p$ has a small slope $P_1$ refinement and a small slope $P_2$ refinement unless $v_p(\chi_i(p))=0$ for all $i.$
    
\end{itemize}    
\end{example}

\begin{remark}
    Small slope refinements need not exist. For example, if $\Pi_{p}$ is unramified with Satake parameters $\alpha_{p}, \beta_{p}, \gamma_{p}$, the $P_{1}$-refinements are the unramified characters sending $p$ to one of these parameters. Since $v_{p}(\alpha_{p}) + v_{p}(\beta_{p}) + v_{p}(\gamma_{p}) = v_{p}(\omega_{\Pi}(p)) = 0$  it can happen that all three valuations vanish, in which case every $P_{1}$-refinement has slope $0$, i.e.\ normalised slope exactly $a + 1$. 
\end{remark}

\subsubsection{$P_{1}$-refined newvectors}
From now on fix a regular unramified $P_{1}$-refinement $\sigma_{p}$, determined by $\alpha_{p} := \sigma_{p}(p) \neq 0$. (This is no loss of generality: as in \cite[Proposition\ 2.19]{loefflerWilliams2021p}, since $\sigma_{p}$ is a character of $\Qp^{\times}$ we may twist $\Pi$ by a Dirichlet character to make it unramified, and the conjectural interpolation property is invariant under such twists.) For $r \geq 0$ consider the group
\[
    \mathcal{U}_{1, p}^{(P_{1})}(p^{r}) = \left\{g \in \GL_{3}(\Zp): g \equiv 1 \ \mathrm{mod} \ \begin{psmallmatrix}
        * & * & * \\
        p & * & * \\
        p^{R} & p^{r} & p^{r}
    \end{psmallmatrix}\right\} \subset \GL_{3}(\Zp), \qquad R = \max\{r, 1\}.
\]
This group admits an Iwahori decomposition with respect to $P_{1}$, and its intersection with the $\GL_{2}$-factor of $L_{1}$ is the level-$p^{r}$ mirabolic subgroup of Casselman's local newvector theory \cite{casselman73}. Combining Casselman's canonical lifting theorem (see \cite{emerton2006jacquet} for an account) with the $\GL_{2}$ newvector theorem, one obtains (cf.\ \cite[Proposition\ 2.24]{loefflerWilliams2021p}):

\begin{proposition} \label{prop:newvector}
    For all sufficiently large $r$ there exists $\varphi_{p} \in \Pi_{p}^{\mathcal{U}_{1, p}^{(P_{1})}(p^{r})}$ satisfying $U_{p, 1}\varphi_{p} = p^{1 + a}\alpha_{p}\,\varphi_{p}$. The minimal such $r$, denoted $r(\tilde{\Pi})$, equals the conductor of the $\GL_{2}(\Qp)$-representation $\sigma'_{p}$, and for $r = r(\tilde{\Pi})$ the space of such vectors is one-dimensional.
\end{proposition}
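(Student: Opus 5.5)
The plan is to reduce to the $\GL_{2}$ newvector theory via Casselman's canonical lifting for the Jacquet module, in the same way as \cite[Proposition 2.24]{loefflerWilliams2021p}.

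\textbf{Step 1: the level groups.} Write $K_{r} = \mathcal{U}_{1,p}^{(P_{1})}(p^{r})$. It has an Iwahori decomposition
\[
K_{r} = \bar{N}_{1,r}\,(L_{1}\cap K_{r})\,N_{1}(\Zp),
\]
where $\bar{N}_{1,r}$ has entries in $p\Zp$ and $p^{R}\Zp$. Its Levi part is $\Zp^{\times}\times K_{1}^{\GL_{2}}(p^{r})$, with $K_{1}^{\GL_{2}}(p^{r})$ Casselman's mirabolic group. The element $\tau = \tau_{1}$ is positive for $P_{1}$: conjugation $\tau N_{1}(\Zp)\tau^{-1}$ contracts $N_{1}$, and $\tau^{-1}\bar{N}_{1,r}\tau$ contracts $\bar{N}_{1}$.

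\textbf{Step 2: canonical lifting.} By Casselman's canonical lifting theorem (Jacquet's lemma plus Emerton's formulation \cite{emerton2006jacquet}), the natural projection restricts to an isomorphism between the finite-slope part of $\Pi_{p}^{K_{r}}$ for $[K_{r}\tau K_{r}]$ and $J_{P_{1}}(\Pi_{p})^{L_{1}\cap K_{r}}$, taken with the appropriate positive-element action. Under this isomorphism $[K_{r}\tau K_{r}]$ corresponds to the action of $\tau$ on the Jacquet module times a modulus factor $\delta_{P_{1}}^{-1/2}(\tau)=p$ from unitary normalisation. Combined with the factor $p^{a}$ in $U_{p,1}$, the $U_{p,1}$-eigenvalue on a vector whose image lies in $\sigma_{p}\times\sigma'_{p}$ is $p^{1+a}\sigma_{p}(p)=p^{1+a}\alpha_{p}$. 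The constant $p^{1+a}$ is exactly the normalisation chosen there, so I will track it carefully.

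By regularity, the generalised $p^{1+a}\alpha_{p}$-eigenspace of $\tau$ on $J_{P_{1}}(\Pi_{p})^{L_{1}\cap K_{r}}$ is a genuine eigenspace. It equals $(\sigma_{p}\times\sigma'_{p})^{\Zp^{\times}\times K_{1}(p^{r})}$, because $\tau$ is central in $L_{1}$ and acts on each irreducible subquotient by its central character at $\tau$. Since the other subquotients have different central characters, they contribute nothing. I expect the main subtlety to be this point: a subquotient could have the same value at $\tau$ while differing on $\Zp^{\times}$. I would handle this by noting that $\Zp^{\times}$-invariance (from $L_{1}\cap K_{r}\supset\Zp^{\times}\times 1$) kills subquotients whose $\GL_{1}$-character is ramified. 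Once restricted to unramified $\GL_{1}$-part, the central characters are distinguished by their value at $\tau$, combined with the central $\GL_{2}$-part being invariant under $Z(\Zp)\cap K_{1}(p^{r})$.

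\textbf{Step 3: conclude with Casselman.} Since $\sigma_{p}$ is unramified, the eigenspace is isomorphic to $\sigma_{p}'^{\,K_{1}(p^{r})}$. By Casselman's $\GL_{2}$ newvector theorem, this space is nonzero exactly when $r\geq c(\sigma'_{p})$, and it is one-dimensional at $r=c(\sigma'_{p})$. Lifting back gives $\varphi_{p}$ with the stated properties, and $r(\tilde{\Pi})=c(\sigma'_{p})$.

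There is one caveat about the shape of $K_{r}$. Because $R=\max\{r,1\}$, the group has Iwahori level in the first column even when $r=0$. This is harmless, since the canonical lifting only requires an Iwahori decomposition with respect to $P_{1}$, which this group has for all $r\geq 0$.
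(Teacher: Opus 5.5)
Your route is the paper's. The paper gives no argument beyond citing Casselman's canonical lifting together with the $\GL_{2}$ newvector theorem, as in \cite[Proposition 2.24]{loefflerWilliams2021p}. Several of your checks are correct: the Iwahori decomposition of $K_{r}$ (note that this is what $R\geq r$ in the $(3,1)$-entry is for), the strict positivity of $\tau$, and the normalising factor $[N_{1}(\Zp):\tau N_{1}(\Zp)\tau^{-1}]\,\delta_{P_{1}}^{1/2}(\tau)=\delta_{P_{1}}^{-1/2}(\tau)=p$, which gives the eigenvalue $p^{1+a}\alpha_{p}$. One small slip: you want the generalised $\alpha_{p}$-eigenspace of $\tau$ on $J_{P_{1}}(\Pi_{p})^{L_{1}\cap K_{r}}$, not the $p^{1+a}\alpha_{p}$-eigenspace; the latter is the eigenvalue of the transported $U_{p,1}$.

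Your resolution of the subtlety you flagged in Step 2 does not work as written. A central character of $L_{1}$ is a pair $(\chi_{1},\omega')$ of characters of $\Qp^{\times}$. Its value at $\tau$ only sees $\chi_{1}(p)$. Invariance under the scalars $1+p^{r}\Zp$ in $K_{1}(p^{r})$ only says that $\omega'$ is trivial on $1+p^{r}\Zp$, not that $\omega'=\omega_{\sigma'_{p}}$. So nothing you wrote excludes a constituent $\sigma_{p}\otimes\pi'$ with $\omega_{\pi'}\neq\omega_{\sigma'_{p}}$. Such a constituent is permitted by the regularity hypothesis, has $\Zp^{\times}$-invariants and $\tau$-eigenvalue $\alpha_{p}$, and would enlarge the eigenspace. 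That would break both the one-dimensionality and the equality $r(\tilde{\Pi})=c(\sigma'_{p})$.

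The missing input is the central character of $\GL_{3}$. The centre $Z_{\GL_{3}}(\Qp)$ acts on $\Pi_{p}$, and hence on every constituent of $J_{P_{1}}(\Pi_{p})$, through $\omega_{\Pi_{p}}$, since $\delta_{P_{1}}$ is trivial on $Z_{\GL_{3}}$. Therefore $\chi_{1}\omega'=\omega_{\Pi_{p}}$. An unramified $\chi_{1}$ with $\chi_{1}(p)=\alpha_{p}$ then forces $\chi_{1}=\sigma_{p}$ and $\omega'=\omega_{\Pi_{p}}\sigma_{p}^{-1}=\omega_{\sigma'_{p}}$, which is the full central character of $\sigma_{p}\times\sigma'_{p}$.

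Regularity now shows that the generalised component of $J_{P_{1}}(\Pi_{p})$ for this central character is exactly $\sigma_{p}\times\sigma'_{p}$. Any further constituent would be a subquotient of $J_{P_{1}}(\Pi_{p})/(\sigma_{p}\times\sigma'_{p})$ with the same central character. Taking $L_{1}\cap K_{r}$-invariants is exact, and $\tau$ acts by a scalar on $\sigma_{p}\times\sigma'_{p}$. So your identification of the eigenspace with $(\sigma'_{p})^{K_{1}(p^{r})}$ follows.

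Finally, Step 3 needs $\sigma'_{p}$ to be infinite-dimensional for Casselman's theorem to apply, and you should say so. This holds for regular refinements. By the classification in the Example, a one-dimensional $\GL_{2}$-constituent of $J_{P_{1}}(\Pi_{p})$ only arises when $\theta=\lambda|\cdot|^{1/2}$ in the fourth case. There it shares its central character with another constituent, so that refinement is not regular.
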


We define the \emph{$P_{1}$-refined Whittaker newvector} $W^{\alpha}$ to be the unique basis vector of this one-dimensional space in the Whittaker model of $\Pi_{p}$ normalised so that $W^{\alpha}(1) = 1$. Its values along the torus are given in terms of the $\GL_{2}$-newvector of $\sigma'_{p}$ by \cite[Proposition\ 2.25]{loefflerWilliams2021p}.

\subsection{The Coates--Perrin-Riou factor at $p$} \label{sec:ep}

Let $(t, \mathrm{id}) \in \Crit^{\pm}(\Pi)$ and let $i = 1$ for the sign $-$ and $i = 2$ for the sign $+$. Write $\psi^{-} = \psi$ and $\psi^{+} = \psi^{-1}$. For a $P_{i}$-refinement $\sigma_{p}$ of $\Pi_{p}$, following \cite[\S 2]{coates89} and \cite[Definition\ 2.16]{loefflerWilliams2021p} we define the modified Euler factor at $p$:
\[
    e_{p}(\Pi_{p}, \sigma_{p}, t) := \gamma_{p}(\sigma_{p}, \psi^{\pm}, t)^{-1} = \frac{L(\sigma_{p}, t)}{L(\sigma_{p}^{\vee}, 1 - t)\,\varepsilon_{p}(\sigma_{p}, \psi^{\pm}_{p}, t)},
\]
where $\gamma_{p}$ is the local $\gamma$-factor, and the sign of the additive character matches the sign of the critical region. Since our refinements are fixed throughout, we drop $\sigma_{p}$ (and $\pm$) from the notation and write $e_{p}(\Pi_{p}, t)$, and for twists by characters $\eta_{p}$ of $\mathrm{GL}_{3}(\Qp)$ we write $e_{p}(\Pi_{p} \times \eta_{p}, t) := e_{p}(\Pi_{p} \times \eta_{p}, \sigma_{p} \times \eta_{p}, t)$.

\begin{example} \label{ex:epexplicit}
    If $\sigma_{p}$ is an unramified $P_{1}$-refinement with $\sigma_{p}(p) = \alpha_{p} \neq 0$, then for $(-j, \eta) \in \Crit^{-}_{p}(\Pi)$,
    \[
        e_{p}(\Pi_{p} \times \eta_{p}, -j) = \begin{cases}
            G(\eta^{-1}) \cdot \left(p^{j + 1}\alpha_{p}\right)^{-n} & \mathrm{cond}(\eta) = p^{n} > 1, \\[2pt]
            \dfrac{1 - p^{-j - 1}\alpha_{p}^{-1}}{1 - p^{j}\alpha_{p}} & \eta = 1.
        \end{cases}
    \]
\end{example}

\section{Locally symmetric spaces and cohomology} \label{sec:lss}

We define the locally symmetric spaces, open compact subgroups and local systems that we will use in this article.

\subsection{Locally symmetric spaces}
For any reductive group $A$ over $\Q$ and any open compact subgroup $K \subset A(\A_{f})$, define
\[
    Y^{A}(K) := A^{+}(\Q) \backslash \left(\left(A(\A_{f})/K\right) \times \mathcal{H}^{+}_{A}\right),
\]
where a superscript $(\cdot)^{+}$ indicates the connected component of the identity in the real topology, $\mathcal{H}^{+}_{A} = A^{+}(\R)/K^{\circ}_{A, \infty}Z^{+}_{A}(\R)$, $K_{A, \infty} \subset A(\R)$ is a maximal compact subgroup, and $Z_{A}$ is the centre of $A$.

Since $\iota(Z_H)\not\subset Z_G$, the embedding $\iota$ does not induce a map $\mathcal{H}_H^+\to\mathcal{H}_G^+$ of symmetric spaces. To account for this, as in \cite{loefflerWilliams2021p}, we consider the auxiliary space $\tilde{\mathcal{H}}^{+}_{H} := H^{+}(\R)/K^{\circ}_{H, \infty}\iota^{-1}(Z^{+}_{G}(\R))$, which, since $\iota^{-1}(Z_{G}) \subset Z_{H}$ admits natural maps
\begin{equation} \label{eq:roof}
\begin{tikzcd}
    & \tilde{\mathcal{H}}^{+}_{H} \arrow[dl, "f"'] \arrow[dr, "\iota"] & \\
    \mathcal{H}^{+}_{H} & & \mathcal{H}^{+}_{G},
\end{tikzcd}
\end{equation}
the left map being the natural quotient and the right map induced by $\iota$. For any open compact $K_{H} \subset H(\A_{f})$ we write
\[
    \tilde{Y}^{H}(K_{H}) = H^{+}(\Q) \backslash \left(\left(H(\A_{f})/K_{H}\right) \times \tilde{\mathcal{H}}^{+}_{H}\right),
\]
and for open compact $K_{G} \subset G(\A_{f})$ we obtain a diagram
\[
\begin{tikzcd}
    & \tilde{Y}^{H}(K_{G} \cap H) \arrow[dl, "f"'] \arrow[dr, "\iota"] & \\
    Y^{H}(K_{G} \cap H) & & Y^{G}(K_{G}).
\end{tikzcd}
\]
The spaces $Y^{H}$ (resp.\ $\tilde{Y}^{H}$, resp.\ $Y^{G}$) have dimension $2$ (resp.\ $3$, resp.\ $5$) as real manifolds; if $K_{G}$ is sufficiently small the map $\iota$ on the right is a closed embedding of codimension $2$ (cf.\ \cite[Theorem\ 6.10]{loefflerWilliams2021p} and \cite[\S 4.6]{loefflerspherical}).

\subsection{Components of locally symmetric spaces} \label{sec:components}

\begin{definition}\label{km1m2}
Let $A\in\{G,H\}$ and let $K\subset A(\mathbb{A}_f)$ be a compact subgroup. For integers $M_{1}, M_{2} \geq 1$ define
\[
    K(M_{1}, M_{2}) = \{k \in K: \nu_{i}(k) \in 1 + M_{i}\hat{\Z}, \ i = 1, 2\}.
\]
When $M_{2} = 1,$ we write $K(M_{1}, 1) = K(M_{1})$. 
\end{definition}

Let $K\subset A(\mathbb{A}_f)$ be open compact, $M_1,M_2\geq1$ and suppose the map
\begin{align*}
    K &\to \left(\Z/M_{1}\Z\right)^{\times} \times \left(\Z/M_{2}\Z\right)^{\times} \\
    k &\mapsto (\nu_{1}(k), \nu_{2}(k))
\end{align*}
is surjective.
We regard $\nu_i$ as a map
$$
  \nu_i\colon A^+(\mathbb{Q})\backslash A(\mathbb{A}_f)/K(M_1,M_2)\to
  \mathbb{Q}^\times_{>0}\backslash\mathbb{A}_f^\times\big/\bigl((1+M_i\hat{\mathbb{Z}})
  \textstyle\prod_{\ell\nmid M_i}\mathbb{Z}_\ell^\times\bigr)\cong(\mathbb{Z}/M_i\mathbb{Z})^\times,
$$
the last identification being normalised so that a uniformiser $\varpi_\ell$ at
$\ell\nmid M_i$ maps to $\ell\bmod M_i$. The following lemma is a special case of
a well-known general phenomenon.

\begin{lemma} \label{lem:components}
The map
$$
  \Upsilon=(\nu_1,\nu_2,\mathrm{pr})\colon Y^A(K(M_1,M_2))\xrightarrow{\ \sim\ }
  (\mathbb{Z}/M_1\mathbb{Z})^\times\times(\mathbb{Z}/M_2\mathbb{Z})^\times\times Y^A(K)
$$
is an isomorphism, equivariant for the natural action of $K$ on the source
and the action
\[
  k \cdot (x_1,x_2,y)
    = (\nu_1(k)x_1,\nu_2(k)x_2,y)
\]
on the target.
\end{lemma}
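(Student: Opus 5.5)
The plan is to reduce everything to strong approximation for $A^{\mathrm{der}}$ together with the fact that $\mathbb{Q}$ has class number one. First, the map $\Upsilon$ is well defined. The component $\mathrm{pr}$ is the natural projection $Y^A(K(M_1,M_2)) \to Y^A(K)$. Each $\nu_i$ is trivial on $A^+(\mathbb{Q})$ modulo $\mathbb{Q}^\times_{>0}$: an element of $A^+(\mathbb{Q})$ has $\nu_i$ lying in $\mathbb{Q}^\times_{>0}$, because $\nu_i$ is positive on the identity component $A^+(\mathbb{R})$. Also $\nu_i(K(M_1,M_2)) \subset 1+M_i\hat{\mathbb{Z}}$ by definition. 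The $\mathcal{H}^+_A$ factor is connected and is simply ignored. For the equivariance, write $k \in K$ acting by right translation $g \mapsto gk$. Since $K(M_1,M_2)$ is the kernel of $K \to (\mathbb{Z}/M_1)^\times \times (\mathbb{Z}/M_2)^\times$, it is normal in $K$, so this action is well defined. It multiplies $\nu_i$ by $\nu_i(k)$ and fixes the image in $Y^A(K)$, which is exactly the stated action on the target.

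Next I will show that $\Upsilon$ is bijective. This is a statement about component sets and their fibres, so I will argue fibrewise over $Y^A(K)$. The projection $Y^A(K(M_1,M_2)) \to Y^A(K)$ is the quotient by the finite group $K/K(M_1,M_2) \cong (\mathbb{Z}/M_1)^\times \times (\mathbb{Z}/M_2)^\times$; this isomorphism is where the surjectivity hypothesis is used. So it suffices to show that this group acts simply transitively on each fibre, compatibly with $(\nu_1,\nu_2)$. Transitivity is immediate from the quotient description, and the equivariance already established then identifies each fibre with a torsor that $(\nu_1,\nu_2)$ maps equivariantly to $(\mathbb{Z}/M_1)^\times\times(\mathbb{Z}/M_2)^\times$. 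An equivariant map of torsors is a bijection, so $\Upsilon$ is bijective.

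\textbf{Freeness is the main point.} Suppose $x=[g,h]$ and $x\cdot k = x$. Then $\gamma g k' = g k$ with $\gamma \in A^+(\mathbb{Q})$ (with $\gamma$ also fixing $h$) and $k' \in K(M_1,M_2)$. Applying $\nu_i$ gives $\nu_i(\gamma)\nu_i(k') = \nu_i(k)$ in $\mathbb{A}_f^\times$. The element $\nu_i(\gamma)\in\mathbb{Q}^\times_{>0}$ lies in $\mathbb{Q}^\times_{>0}\cap\hat{\mathbb{Z}}^\times=\{1\}$, since $\nu_i(k)$ and $\nu_i(k')$ are units. Hence $\nu_i(k)\in 1+M_i\hat{\mathbb{Z}}$, so $k\in K(M_1,M_2)$ and $k$ acts trivially. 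The same computation shows directly that $(\nu_1,\nu_2)$ is injective on fibres, which gives injectivity of $\Upsilon$ without appealing to the torsor argument.

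Finally, $\Upsilon$ is a local homeomorphism, since both spaces are finite covers of $Y^A(K)$ and $\Upsilon$ commutes with the projections to $Y^A(K)$. A bijective local homeomorphism is a homeomorphism, so $\Upsilon$ is an isomorphism. I expect no real obstacle here. The only subtle point is the identification of the target of $\nu_i$ with $(\mathbb{Z}/M_i)^\times$, which uses $\mathbb{A}_f^\times=\mathbb{Q}^\times_{>0}\hat{\mathbb{Z}}^\times$, and checking that the stated normalisation via uniformisers $\varpi_\ell\mapsto\ell$ is consistent with this identification.
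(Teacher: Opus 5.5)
\textbf{Verdict.} The paper gives no proof of this lemma; it is quoted as a special case of a well-known phenomenon. So there is no route to compare against. Your argument is correct and is the standard one. I have two small comments.

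\textbf{Strong approximation is not used.} Despite your opening sentence, you never use strong approximation for $A^{\mathrm{der}}$, and you do not need it. The statement is relative to $Y^A(K)$, and the only arithmetic inputs are $\mathbb{Q}^\times_{>0}\cap\hat{\mathbb{Z}}^\times=\{1\}$ and $\mathbb{A}_f^\times=\mathbb{Q}^\times_{>0}\hat{\mathbb{Z}}^\times$. Your freeness computation also does more work than you say. It is what makes $Y^A(K(M_1,M_2))\to Y^A(K)$ a genuine finite covering without any neatness assumption on $K$, and your final local-homeomorphism step needs exactly that.

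\textbf{The normalisation you flag.} Since the diagonal rational $\ell$ is trivial in the quotient, $\varpi_\ell$ has the same class as the unit idele that is $1$ at $\ell$ and $\ell^{-1}$ elsewhere. Hence under $\varpi_\ell\mapsto\ell$, a unit $u\in\hat{\mathbb{Z}}^\times$ is sent to $u^{-1}\bmod M_i$; the paper itself records this later for $z\in\mathbb{Z}_p^\times$. So the displayed action is correct when $\nu_i(k)$ is read as its class in $\mathbb{Q}^\times_{>0}\backslash\mathbb{A}_f^\times/(1+M_i\hat{\mathbb{Z}})$. Under naive reduction that class is $\nu_i(k)^{-1}\bmod M_i$. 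Bijectivity is unaffected, because for either $\epsilon=\pm1$ the map $k\mapsto(\nu_1(k)^{\epsilon},\nu_2(k)^{\epsilon})$ induces an isomorphism $K/K(M_1,M_2)\cong(\mathbb{Z}/M_1\mathbb{Z})^\times\times(\mathbb{Z}/M_2\mathbb{Z})^\times$. Your torsor argument therefore goes through as written.
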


\subsection{Open compact subgroups} \label{sec:opencompacts}

We now fix the level structures used in the construction. Fix $n \geq 1$ and recall $r = r(\tilde{\Pi})$ from Proposition \ref{prop:newvector}. 

\subsubsection{Subgroups of $\GL_{2}$} Let $\mathcal{U}^{\GL_{2}, (p)} \subset \GL_{2}(\A^{(p)}_{f})$ be open compact and define
\[
\mathcal{U}^{\GL_{2}}_{n} := \left\{A \in \GL_{2}(\Zp) : A \equiv \begin{psmallmatrix} * & * \\ & 1 \end{psmallmatrix} \bmod p^{n} \right\} \times \mathcal{U}^{\GL_{2}, (p)} \subset \GL_{2}(\A_{f}).
\]

\subsubsection{Subgroups of $H$}
Let $\mathcal{U}^{H, (p)} \subset H(\A^{(p)}_{f})$ be open-compact and define for $t, n \geq 1$ the following subgroups of $H(\A_{f})$:
\begin{align*}
      Q_{H, t, n}^{0} &:= \{\begin{psmallmatrix}
        a & b \\ c & d
    \end{psmallmatrix} \times (z): b \equiv 0 \bmod p^{n}, c \equiv 0 \bmod p^{t}, d \equiv 1 \bmod p^{t}\} \times \mathcal{U}^{H, (p)} , \\
     Q^0_{H,\infty,n}&:=\bigcap_t Q^0_{H,t,n},\\
      \mathcal{U}^{H}_{1}(p^{t}) &:= \left\{A \in H(\Zp) : A \equiv \begin{psmallmatrix} * & * \\ & 1 \end{psmallmatrix} \times (*) \bmod p^{t} \right\} \times \mathcal{U}^{H, (p)}.      
\end{align*}

We use the notation of Definition\ref{km1m2} for these groups: for example
$Q^0_{H,t,n}(p^n)=\{h\in Q^0_{H,t,n}:\nu_1(h)\equiv1\bmod p^n\}$.

\subsubsection{Subgroups of $\GL_{3}$ and $G$}
Let $\mathcal{U}^{\GL_{3}, (p)} \subset \GL_{3}(\A^{(p)}_{f})$ be open compact and define the following subgroups of $\mathrm{GL}_{3}(\A_{f})$:
\begin{align*}
\mathcal{U} &:= \mathcal{U}^{(P_1)}_{1,p}(p^r)\times\mathcal{U}^{\mathrm{GL}_3,(p)}
  = \Big\{g\in\mathrm{GL}_3(\mathbb{Z}_p) : g\equiv 1 \bmod
  \left(\begin{smallmatrix} * & * & * \\ p & * & * \\ p^R & p^r & p^r\end{smallmatrix}\right)\Big\}
  \times\mathcal{U}^{\mathrm{GL}_3,(p)},\\
     \mathcal{U}^{\GL_{3}}_{n} &:= \left\{g \in \GL_{3}(\Zp): g \equiv 1 \ \mathrm{mod} \ \begin{psmallmatrix}
    * & p^{n} & p^{n} \\ * & * & * \\ p^{r} & p^{r} & p^{r}
    \end{psmallmatrix}\right\} \times \mathcal{U}^{\GL_{3}, (p)}, \\
    \mathcal{V}_{n}^{\GL_{3}} &:= \tau^{-n}\,\mathcal{U}^{\GL_{3}}_{n}\,\tau^{n} = \left\{g \equiv 1 \ \mathrm{mod} \ \begin{psmallmatrix}
    * & * & * \\ p^{n} & * & * \\ p^{n + r} & p^{r} & p^{r}
    \end{psmallmatrix}\right\} \times \mathcal{U}^{\GL_{3}, (p)},
\end{align*}
where $R=\max\{r,1\}$. Note that $\mathcal{V}_{n}^{\GL_{3}} \subset \mathcal{U}$ for all $n \geq 1$, and that $\mathcal{U}_{n + 1}^{\GL_{3}} = \mathcal{U}_{n}^{\GL_{3}} \cap \tau\,\mathcal{U}_{n}^{\GL_{3}}\,\tau^{-1}$. We define
\begin{align*}
\mathcal{U}^{G}_{n} &= \mathcal{U}^{\GL_{3}}_{n} \times \hat{\Z}^{\times} \\
\mathcal{V}^{G}_{n} &= \tau^{-n}\mathcal{U}^{G}_{n}\tau^{n} \\ \mathcal{U}^{G, (p)} &= \mathcal{U}^{\mathrm{GL}_{3}, (p)} \times \prod_{\ell \neq p}\Z_{\ell}^{\times}.
\end{align*}
The following is the analogue of \cite[Proposition\ 6.3]{loefflerWilliams2021p}. 

\begin{lemma} \label{lem:inclusion}
    Let $t \geq \max(n, r)$ and $h = \left(\begin{psmallmatrix} a & b \\ c & d\end{psmallmatrix}, z\right) \in H(\Zp)$. Then
    \begin{enumerate}
        \item $\mathcal{U}^{H}_{1}(p^{t}) \cap u\,\mathcal{U}^{G}_{n}u^{-1}$ consists of those $h \in \mathcal{U}^{H}_{1}(p^{t})$ with $b \equiv 0$ and $a \equiv z \bmod p^{n}$. Its index in $\mathcal{U}^{H}_{1}(p^{t})$ is $p^{2n - 1}(p - 1)$,
        \item We have  
     $Q^0_{H,t,n}(p^n)=\mathcal{U}_1^H(p^t)\cap u\,\mathcal{U}_n^G\,u^{-1}.$ In particular, if we have $\nu_{2}\left(\mathcal{U}^{H, (p)}\right) \in 1 + D\hat{\Z}$ for some prime-to-$p$ $D$, then
        \[
            \mathcal{U}^{H}_{1}(p^{t}) \cap u\,\mathcal{U}^{G}_{n}u^{-1} \subset u\,\mathcal{U}^{G}_{n}(p^{n}, D)\,u^{-1}.
        \]
    \end{enumerate}
\end{lemma}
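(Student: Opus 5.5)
The plan is a direct matrix computation: write down $u^{-1}\iota(h)u$ explicitly for a general $h \in H(\Zp)$, read off the membership conditions for $\mathcal{U}^{G}_{n}$, and then compare with the definition of $Q^{0}_{H,t,n}(p^{n})$.

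\emph{Step 1 (conjugation formula).} From Definition \ref{def:u} we compute
\[
u = \begin{psmallmatrix} 1 & 1 & 0\\ 0&0&-1\\ 0&1&0\end{psmallmatrix}, \qquad u^{-1}=\begin{psmallmatrix} 1&0&-1\\0&0&1\\0&-1&0\end{psmallmatrix}.
\]
Multiplying out then gives, for $h = \left(\begin{psmallmatrix} a & b \\ c & d\end{psmallmatrix}, z\right)$,
\[
u^{-1}\iota(h)u = \begin{psmallmatrix} a & a-z & -b\\ 0 & z & 0\\ -c & -c & d\end{psmallmatrix}\times(z).
\]
This generalises \eqref{eq:uconj}, which is the special case $c=0$, $d=1$, and comparing with \eqref{eq:uconj} is a useful consistency check. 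The only care needed in this step is bookkeeping of signs; I do not expect a genuine obstacle anywhere, and this is the step most worth double-checking.

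\emph{Step 2 (part 1).} Let $h\in\mathcal{U}^{H}_{1}(p^{t})$ with $t\geq\max(n,r)$. We check the defining congruences of $\mathcal{U}^{\GL_3}_{n}$ entry by entry.
\begin{itemize}
\item The third row is $(-c,-c,d)$. Since $c\equiv 0$ and $d\equiv 1 \bmod p^{t}$, and $t\geq r$, it is automatically $\equiv (0,0,1)\bmod p^{r}$.
\item The middle row $(0,z,0)$ and the $(1,1)$ entry are unconstrained.
\item The GL$_1$-component is unconstrained, since $\hat\Z^\times$ is the full factor.
\end{itemize}
The only remaining conditions are on the $(1,2)$ and $(1,3)$ entries: $a-z\equiv0$ and $b\equiv 0\bmod p^{n}$. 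This is exactly the claimed description.

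For the index, observe that on $\mathcal{U}^{H}_{1}(p^{t})$ the quantities $b \bmod p^{n}$ and $(a,z)\bmod p^{n}$ vary independently, with $a,z\in\Zp^\times$ arbitrary. The condition $b\equiv0 \bmod p^{n}$ therefore cuts out index $p^{n}$. The condition $a\equiv z\bmod p^{n}$ cuts out index $|(\Z/p^n\Z)^\times|=p^{n-1}(p-1)$, being the kernel of the surjection $(a,z)\mapsto az^{-1}\in(\Z/p^{n})^\times$. Multiplying gives $p^{2n-1}(p-1)$.

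\emph{Step 3 (part 2).} For $h\in Q^{0}_{H,t,n}$ we have $b\equiv 0 \bmod p^n$, and $c\equiv0$, $d\equiv1\bmod p^t$. Hence
\[
\nu_1(h)=\frac{ad-bc}{z}\equiv \frac{a}{z} \bmod p^{n},
\]
so $\nu_1(h)\equiv 1 \bmod p^{n}$ if and only if $a\equiv z\bmod p^{n}$. This shows $Q^0_{H,t,n}(p^n)=\mathcal{U}^H_1(p^t)\cap u\,\mathcal{U}^G_n u^{-1}$ at $p$. Away from $p$ both sides are defined by the same group $\mathcal{U}^{H,(p)}$, and $u\in G(\Z)$ conjugates integrally; I will take the intersection with $\mathcal{U}^{G,(p)}$ to be arranged by the choice of tame level, as in \cite[Proposition 6.3]{loefflerWilliams2021p}.

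For the final inclusion, note that $\nu_1$ and $\nu_2$ are characters of $G$, hence conjugation invariant. Thus $\nu_i(u^{-1}hu)=\nu_i(h)$. We have $\nu_1(h)\in 1+p^n\Zp$ at $p$ by the above. For $\nu_2(h)=z$, the hypothesis $\nu_2(\mathcal{U}^{H,(p)})\subset 1+D\hat\Z$ gives the tame condition, and there is no $p$-condition on $\nu_2$ since $p\nmid D$. Together these show $u^{-1}hu\in\mathcal{U}^G_n(p^n,D)$.
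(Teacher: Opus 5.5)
Your proof is correct. The conjugation formula $u^{-1}\iota(h)u$ is right and specialises to \eqref{eq:uconj}, and the entry-by-entry membership check, the index count $p^{n}\cdot p^{n-1}(p-1)$ and the reduction $\nu_{1}(h)\equiv a/z \bmod p^{n}$ all hold. The paper cites \cite[Proposition 6.3]{loefflerWilliams2021p} for part (1) and calls the rest an easy check, so your direct computation is the same argument written out in full, and it treats the tame level no less carefully than the paper does.
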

\begin{proof}
    Part (1) is \cite[Proposition\ 6.3]{loefflerWilliams2021p}, the rest is easily checked. 
\end{proof}

\subsection{Local systems on locally symmetric spaces} \label{sec:localsystems}

Let $A, K$ be as above and let $V$ be a $\Q$-linear algebraic representation of $A$. There are three standard constructions of local systems on $Y^{A}(K)$ attached to $V$:
\begin{itemize}
    \item a local system $\mathscr{V}_{\Q}$ of $\Q$-vector spaces, defined as the locally constant sections of
    \[
        A^{+}(\Q) \backslash \left[\left(A(\A_{f}) \times \mathcal{H}_{A}^{+}\right) \times V(\Q)\right]/K \to Y^{A}(K),
    \]
    with action $\gamma \cdot [(g, z), v] \cdot k = [(\gamma g k, \gamma z), \gamma v]$,
    \item a local system $\mathscr{V}_{\R}$ of $\R$-vector spaces, defined as the locally constant sections of
    \[
        A^{+}(\Q) \backslash \left[\left(A(\A_{f}) \times A^{+}(\R)\right) \times V(\R)\right]/KK^{\circ}_{A, \infty}Z^{\circ}_{A, \infty} \to Y^{A}(K),
    \]
    with action $\gamma \cdot [(g, z), v] \cdot kk_{\infty}z_{\infty} = [(\gamma g k, \gamma z k_{\infty}z_{\infty}), (k_{\infty}z_{\infty})^{-1}v]$,
    \item a local system $\mathscr{V}_{\Qp}$ of $\Qp$-vector spaces, defined as the locally constant sections of
    \[
        A^{+}(\Q) \backslash \left[\left(A(\A_{f}) \times \mathcal{H}_{A}^{+}\right) \times V(\Qp)\right]/K \to Y^{A}(K),
    \]
    with action $\gamma \cdot [(g, z), v] \cdot k = [(\gamma g k, \gamma z), k_{p}^{-1}v]$, where $k_{p}$ is the $p$-component of $k$.
\end{itemize}
If $V$ carries a $K_{p}$-stable lattice $V_{\Zp}$ (e.g.\ an admissible lattice as in Section \ref{sec:lattices}), the third construction yields a local system $\mathscr{V}_{\Zp}$ of $\Zp$-modules, and similarly with coefficients in $\mathcal{O}_{L}$ for $L/\Qp$ finite. There are comparison isomorphisms
\begin{equation} \label{eq:compisom}
    \mathscr{V}_{\Q} \otimes \Qp \cong \mathscr{V}_{\Qp}, \qquad \mathscr{V}_{\Q} \otimes \R \cong \mathscr{V}_{\R},
\end{equation}
the first given on sections by $[(g, z), v] \mapsto [(g, z), g_{p}^{-1}v]$, see \cite[\S 1.2]{urbaneigen}.

\begin{lemma} \label{lem:vecsecs}
There is a natural injective map 
\[
    V^{K_{p}} \to H^{0}(Y^{A}(K), \mathscr{V}_{\Qp})
\]
sending $v \mapsto \varphi_{v}$, where 
\[
    \varphi_{v}([g, z]) = ([g, z], v).
\] 
Moreover, if $t \in A(\Qp)$ normalises $K$ then the above map is equivariant for the action of $t$ on both sides, with the natural pullback action on $ \varphi \in H^{0}(Y^{A}(K), \mathscr{V}_{\Qp})$, given explicitly by 
\[
    (t \cdot \varphi)([g, z]) =  \varphi([gt, z]) \cdot t^{-1}.
\]
\end{lemma}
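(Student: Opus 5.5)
The plan is to verify well-definedness first, then injectivity, then equivariance. All three should be direct checks in the $\Qp$-model of the local system.

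\textbf{Well-definedness.} Recall that sections of $\mathscr{V}_{\Qp}$ over $Y^{A}(K)$ are locally constant maps $s$ from $A(\A_{f}) \times \mathcal{H}^{+}_{A}$ to $V(\Qp)$ satisfying
\[
s(\gamma g k, \gamma z) = k_{p}^{-1}\, s(g,z) \qquad \text{for } \gamma \in A^{+}(\Q),\ k \in K.
\]
This uses the description of the total space as $A^{+}(\Q)\backslash[(A(\A_{f})\times\mathcal{H}^{+}_{A})\times V(\Qp)]/K$ with $\gamma$ acting trivially on the $V(\Qp)$-factor and $k$ acting by $k_{p}^{-1}$. Given $v \in V^{K_{p}}$, I take the constant function $s_{v}(g,z) = v$, which is the section $\varphi_{v}([g,z]) = ([g,z],v)$. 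For this to descend I need
\[
([\gamma g k, \gamma z], v) = ([g,z], v),
\]
which holds exactly because $k_{p}^{-1}v = v$ for $k \in K$; the $\gamma$-factor acts trivially on $V(\Qp)$, so it imposes no condition. The section is constant, hence locally constant, and the assignment $v \mapsto \varphi_{v}$ is clearly $\Qp$-linear.

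\textbf{Injectivity.} If $\varphi_{v} = 0$, then evaluating at any single point $[1,z]$ gives the class of $v$ in the fibre. The fibre map $V(\Qp) \to$ (fibre) is an isomorphism whenever the stabiliser acts trivially on $v$; even in general, the fibre is a quotient of $V$ by a finite group action only up to identification, and the class vanishes only when $v = 0$. So $v = 0$.

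\textbf{Equivariance.} This is the only step needing care, chiefly to confirm that the stated formula really defines the pullback action. Since $t$ normalises $K$, right multiplication $[g,z] \mapsto [gt,z]$ is a well-defined automorphism of $Y^{A}(K)$. On the total space I lift it by
\[
[(g,z),w] \mapsto [(gt,z),\, t^{-1}w].
\]
Checking compatibility with the relations, the element $k$ is sent to $t^{-1}kt \in K$, and
\[
(t^{-1}kt)_{p}^{-1}\, t^{-1}w = t^{-1}k_{p}^{-1}w,
\]
so the lift is consistent. This confirms the formula $(t\cdot\varphi)([g,z]) = \varphi([gt,z])\cdot t^{-1}$.

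Applying this to $\varphi_{v}$ gives
\[
(t\cdot\varphi_{v})([g,z]) = ([g,z],\, t\cdot v)
\]
after transporting back, which is $\varphi_{t\cdot v}$. Note also that $t\cdot v$ is again $K_{p}$-invariant, since $t$ normalises $K_{p}$. The main obstacle is purely bookkeeping: getting the signs of $t$ versus $t^{-1}$ right, which should be arranged so that the two actions match with the conventions above.
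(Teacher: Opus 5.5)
Your proposal is correct and is essentially the paper's own direct check. Well-definedness comes from $K_{p}$-invariance of $v$ (the $\gamma$-relation acts trivially on the coefficients), and equivariance is the identity $([g,z], t\cdot v) = ([gt,z], v)\cdot t^{-1}$. Your extra verifications, that the lift $[(g,z),w]\mapsto[(gt,z),t^{-1}w]$ respects the $K$-relation and that $t\cdot v$ remains $K_{p}$-invariant, are welcome details the paper leaves implicit.

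Your injectivity paragraph is muddled and needs no stabiliser discussion. If $[(g,z),v] = [(g,z),0]$ in the total space, then $k_{p}^{-1}v = 0$ for some $k \in K$, and hence $v = 0$ because $k_{p}$ acts invertibly.
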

\begin{proof}
    The map is well-defined since for $\gamma \in A(\Q), k \in K$ we have $\varphi_{v}([\gamma g k, \gamma z]) = ([\gamma g k, \gamma z], v) = ([g, z], k_{p}v) = ([g, z], v)$ since $v \in V^{K_{p}}$ and is clearly injective. Given $t \in A(\Qp)$ normalising $K$ we have 
    \begin{align*}
        \varphi_{t\cdot v}([g, z]) &= ([g, z], t \cdot v) \\
        &= ([gtt^{-1}, z], t \cdot v) \\
        &= ([gt, z], v) \cdot t^{-1} \\
        &= \varphi_{v}([gt, z]) \cdot t^{-1}.
    \end{align*}
\end{proof}
\begin{remark}
    More generally for $g \in A(\Qp)$ the above map is compatible with the natural maps $V^{K_{p}} \to V^{g^{-1}K_{p}g}$ and $H^{0}(Y^{A}(K), \mathscr{V}_{\Qp}) \to H^{0}(Y^{A}(g^{-1}Kg), \mathscr{V}_{\Qp})$.
\end{remark}

\begin{definition} \label{def:secs} We define two families of sections.
\begin{enumerate}
\item For $n \geq 1,$ let $K_{n} \subset G(\A_{f})$ be an open compact subgroup whose reduction mod $p^{n}$ is contained in $Q_{H}^{0}(\Z/p^{n}\Z)$. For $0 \leq j \leq a,$ we write $f^{a, j}_{n}$ for the image of $f^{a, j}$ in $\left(V_{\lambda, \Zp} \otimes \nu_{1}^{-j}\right)/p^{n}$ and 
\[
    \varphi^{a, j}_{n} := \varphi_{f^{a, j}_{n}} \in H^{0}(Y^{G}(K_{n}), \left(\mathscr{V}_{\lambda} \otimes \nu_{1}^{-j}\right)(\Zp)/p^{n}).
\]
\item Let $K_{n}\subset G(\A_f)$ that reduces mod $p^{n}$ to a subgroup of the kernel of $\nu_{1}^{j},$ for some $j \geq 0.$ We write 
\[
    \varphi_{j, n} := \varphi_{1} \in H^{0}(Y^{G}(K_{n}), \nu_{1}^{j}/p^{n}),
\]
that is, the image of the element $1 \in \Zp \otimes \nu_{1}^{j}$ under the map of Lemma \ref{lem:vecsecs}. 
\end{enumerate}
\end{definition}
\subsection{Cohomology, translation maps and Hecke operators} \label{sec:heckoconv} \label{sec:heckecoh}

The constructions above are compatible with the projections $Y^{A}(K') \to Y^{A}(K)$ for $K' \subset K$, inducing pullback maps
\begin{equation} \label{eq:proj}
    H^{i}(Y^{A}(K), \mathscr{V}_{?}) \to H^{i}(Y^{A}(K'), \mathscr{V}_{?}), \qquad ? \in \{\Q, \Qp, \R\}.
\end{equation}
For $g \in A(\A_{f})$ with $g^{-1}Kg \subset K'$, right translation gives maps
\begin{align*}
    [g]^{K}_{K'}: Y^{A}(K) &\to Y^{A}(K'), \\
    [h, z] &\mapsto [hg, z],
\end{align*}
and hence pullback and pushforward maps
\begin{align*}
    \left([g]^{K}_{K'}\right)^{*}: H^{i}(Y^{A}(K'), \mathscr{V}_{?}) &\to H^{i}(Y^{A}(K), \mathscr{V}_{?}), \\
    \left([g]^{K}_{K'}\right)_{*}: H^{i}(Y^{A}(K), \mathscr{V}_{?}) &\to H^{i}(Y^{A}(K'), \mathscr{V}_{?}),
\end{align*}
 We write $[g]$ for $[g]^{K}_{K'}$ when $g^{-1}Kg = K'$. 
 \begin{remark}
 The maps $([1]^{K}_{K'})^{*}$ and $([1]^{K}_{K'})_{*}$ are the usual restriction and norm maps.
 \end{remark}
 The pullback maps assemble into an action of $A(\A_{f})$ on $H^{i}(Y^{A}, \mathscr{V}_{?}) := \varinjlim_{K} H^{i}(Y^{A}(K), \mathscr{V}_{?})$, and the cohomology at each level carries an action of the Hecke algebra $\Q[K \backslash A(\A_{f})/K]$, compatibly with \eqref{eq:compisom}.

\begin{definition} \label{def:iwahoridecomp}
    An open compact $K_{p} \subset \GL_{3}(\Zp)$ \emph{admits an Iwahori decomposition with respect to} $P_{1}$ if multiplication induces a bijection
    \[
        \left(\bar{N}_{1}(\Zp) \cap K_{p}\right) \times \left(L_{1}(\Zp) \cap K_{p}\right) \times \left(N_{1}(\Zp) \cap K_{p}\right) \xrightarrow{\ \sim\ } K_{p}.
    \]
\end{definition}
Associated to $u\tau^{n} \in G(\Qp)$ we have a pushforward map $[u\tau^{n}]_{*}: H^{3}(Y^{G}(K), \left(\mathscr{V}_{\lambda}\otimes \nu_{1}^{-j}\right)(\Qp)) \to H^{3}(Y^{G}((u\tau^{n})^{-1}Ku\tau^{n}), \left(\mathscr{V}_{\lambda}\otimes \nu_{1}^{-j}\right)(\Qp)).$ We can normalise this map to preserve the integral cohomology using the normalised $*$-action of $\tau^{-n}$ as in Section \ref{sec:normtau}. 
\begin{definition}
 Write 
\[
    [u\tau^{n}]_{*, j} := p^{(a - j)n}[u\tau^{n}]_{*}: H^{3}(Y^{G}(K), \left(\mathscr{V}_{\lambda}\otimes \nu_{1}^{-j}\right)(\Zp)) \to H^{3}(Y^{G}((u\tau^{n})^{-1}Ku\tau^{n}), \left(\mathscr{V}_{\lambda}\otimes \nu_{1}^{-j}\right)(\Zp)). 
\]
\end{definition}
Now take $K = K^{(p)}K_{p} \subset \GL_{3}(\A_{f})$ with $K_{p}$ admitting an Iwahori decomposition with respect to $P_{1}$. We have the correspondence
\[
\begin{tikzcd}[column sep = large]
    & Y^{\GL_{3}}(K \cap \tau K \tau^{-1})\arrow[dl, "{[1]}"'] \arrow[dr, "{[\tau]}"] & \\
    Y^{\GL_{3}}(K) & & Y^{\GL_{3}}(K),
\end{tikzcd}
\]
where the left map is the natural projection and the right map is translation by $\tau$ followed by the projection from level $\tau^{-1}K\tau \cap K$. We define Hecke operators, 
\[
    U'_{p} := p^{a - j}\left({[1]}^{\tau^{-1}K\tau \cap K}_{K}\right)_{*}\circ [\tau]_{*}\circ \left({[1]}^{K \cap \tau K\tau^{-1}}_{K}\right)^{*},
\]
on $H^{i}_{?}(Y^{\GL_{3}}(K), \left(\mathscr{V}_{\lambda} \otimes \nu_{1}^{-j}\right)(\Zp))$, $? \in \{\emptyset, c\}$, and 
\[
    U_{p} := p^{a-  j}\left({[1]}^{K \cap \tau K \tau^{-1}}_{K}\right)_{*}\circ [\tau^{-1}]_{*} \circ \left({[1]}^{\tau^{-1}K\tau \cap K}_{K}\right)^{*},
\]
on $H^{i}_{?}(Y^{\GL_{3}}(K), \left(\mathscr{V}_{\lambda} \otimes \nu_{1}^{-j}\right)^{\vee}(\Zp))$, $? \in \{\emptyset, c\}$
where the action of $\tau^{\mp 1}$ on the coefficients $\mathscr{V}_{\lambda, \Zp}$ is the normalised $\ast$-action of \eqref{eq:staraction}, which preserves the integral lattice. With this normalisation $U_{p} = U_{p, 1}$ agrees with the operator of Section \ref{sec:heckeopsandprefs} on the $\Pi$-isotypic part. Poincar\'e duality gives a perfect pairing
\begin{equation} \label{eq:poincare}
    \langle -, - \rangle_{K}: \frac{H^{3}(Y^{\GL_{3}}(K), \mathscr{V}_{\lambda}(\mathcal{O}))}{(\mathrm{torsion})} \times \frac{H_{c}^{2}(Y^{\GL_{3}}(K), \mathscr{V}^{\vee}_{\lambda}(\mathcal{O}))}{(\mathrm{torsion})} \to \mathcal{O}
\end{equation}
for any finite extension $\mathcal{O}/\Zp$, given by cup product, the pairing $\mathscr{V}_{\lambda} \otimes \mathscr{V}_{\lambda}^{\vee} \to R$, and integration over the $5$-dimensional manifold $Y^{\GL_{3}}(K)$. Since pullback and pushforward are adjoint under Poincar\'e duality, the operators $U_{p}$ and $U'_{p}$ (acting on the two factors) are adjoint to one another under $\langle -, -\rangle_{K}$.

\subsection{Iwasawa cohomology}

We extend cohomology to compact subgroups which are not necessarily open.

\begin{definition}
Let $V$ be a $\mathbb{Q}_p$-linear algebraic representation of $A$, let
$V_{\mathbb{Z}_p}\subset V$ be an admissible lattice, and let $\mathscr{V}_{\mathbb{Z}_p}$ be
the associated local system. Let $B=U^{(p)}B_p\subset A(\mathbb{A}_f)$, where
$U^{(p)}\subset A(\mathbb{A}_f^{(p)})$ is open compact and $B_p\subset A(\mathbb{Z}_p)$ is
compact. The \emph{Iwasawa cohomology} of $B$ is
$$H^i_{\mathrm{Iw}}(Y^A(B),\mathscr{V}_{\mathbb{Z}_p}):=\varprojlim_{U_p\supset B_p}
H^i(Y^A(U^{(p)}U_p),\mathscr{V}_{\mathbb{Z}_p}),$$
the limit over (a cofinal system of) open compact $U_p\subset A(\mathbb{Z}_p)$ containing
$B_p$, with respect to the norm maps. We define $H^i_{\mathrm{Iw}}(\tilde{Y}^H(B),-)$ for
$B\subset H(\mathbb{A}_f)$ in the same way.
\end{definition}

Let $B$ and $\mathscr{V}_{\mathbb{Z}_p}$ be as in the definition, and for $n\geq1$ let
$U_n\subset A(\mathbb{Z}_p)$ be the preimage of the image of $B_p$ in
$A(\mathbb{Z}/p^n\mathbb{Z})$. Let $W_{\mathbb{Z}_p}$ be an admissible lattice in
another $\mathbb{Q}_p$-linear algebraic representation of $A$. For $w\in W_{\mathbb{Z}_p}^{B_p},$ let $w_n$ be its image in $(W_{\mathbb{Z}_p}/p^n)^{U_n}$, and let $\varphi_{w_n}\in H^0(Y^A(U^{(p)}U_n),\mathscr{W}_{\mathbb{Z}_p}/p^n)$ be the section
attached to $w_n$ by the mod $p^n$ analogue of Lemma \ref{lem:vecsecs}. Then there is a cup product
pairing
\begin{equation} \label{eq:iwcupprod}
H^i_{\mathrm{Iw}}(Y^A(B),\mathscr{V}_{\mathbb{Z}_p})\times W^{B_p}_{\mathbb{Z}_p}\to
H^i_{\mathrm{Iw}}(Y^A(B),\mathscr{V}_{\mathbb{Z}_p}\otimes\mathscr{W}_{\mathbb{Z}_p}),\qquad
(z,w)\mapsto\varprojlim_n\,(z_{U_n}\bmod p^n)\cup\varphi_{w_n}.
\end{equation}

\subsection{Twist operators} \label{sec:twistdescent}

Let $K \subset G(\A_{f})$ be an open compact subgroup satisfying $\nu_{1}(K_{p}) = 1 + p^{n}\Zp$ for some $n \geq 1$ and fix a set representatives $\{g_{1}, \ldots, g_{d}\}$ for the map 
\[
\nu_{1} \times \nu_{2}: G^{+}(\Q) \backslash G(\A_{f}) \times \mathcal{H}_{G}^{+}/K \to \left(\Q^{\times}_{> 0} \right)^{2}\backslash \left(\A_{f}^{\times}\right)^{2}/\left(\nu_{1} \times \nu_{2} \right)(K) =: \Delta.
\]
\begin{definition} \label{def:twist}
    For $j \in \Z$ define the \emph{twisting map}
    \begin{align*}
        \Tw_{j}: \mathrm{Maps}(\Delta, \Zp) &\to \mathrm{Maps}(\Delta, \Zp), \\ f &\mapsto \left(g_{i} \mapsto \left(\norm{\nu_{1}(g_{i})}\nu_{1}(g_{i,p}) \right)^{-j}f(\left(\nu_{1} \times \nu_{2}\right)(g_{i})) \right).
    \end{align*}
\end{definition}
Note that $\mathrm{tw}_{j}$ depends on the choice of $g_{i}$; replacing $g_{i}$ with $\tilde{g}_{i}$ in the same equivalence class changes $\left(\norm{\nu_{1}(g_{i})}\nu_{1}(g_{i,p}) \right)^{-j}$ by an element of $1 + p^{n}\Zp$.  Now suppose $K$ surjects onto $\left(\Z/p^{n}\Z\right)^{\times}\times(\Z/D\Z)^{\times}$ via $\nu_{1} \times \nu_{2}$, with $D \geq 1$ prime to $p$, and consider the level $K(p^{n}, D)$. By Lemma \ref{lem:components},
\[
    Y^{G}(K(p^{n}, D)) \cong \Delta_{1, n} \times \Delta_{2} \times Y^{G}(K), \qquad \Delta_{1, n} \cong \left(\Z/p^{n}\Z\right)^{\times}, \Delta_{2} := (\Z/D\Z)^{\times},
\]
 where the identifications on the right send uniformisers $\varpi_{\ell}$ for $\ell$ away from $p$ (resp. $D$) to $\ell \bmod p^{n}$ (resp. $\bmod \ D$). For each $(x, y) \in \Delta_{1, n} \times \Delta_{2}$ fix a base point $g_{x, y} \in G(\A_{f})$ in the corresponding union of connected components.
\begin{lemma}
 There is a canonical isomorphism
\begin{equation} \label{eq:groupringiso}
    \Psi_{j}: H^{3}\big(Y^{G}(K(p^{n}, D)), (\mathscr{V}_{\lambda} \otimes \nu_{1}^{-j})(\Zp)\big) \xrightarrow{\ \sim\ } H^{3}\big(Y^{G}(K), \left(\mathscr{V}_{\lambda} \otimes \nu_{1}^{-j}\right)(\Zp)\big) \otimes \mathrm{Maps}(\Delta_{1, n} \times \Delta_{2}, \Zp).
\end{equation}
\end{lemma}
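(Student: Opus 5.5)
The plan is to combine Lemma \ref{lem:components} with the twisting map of Definition \ref{def:twist}, so that the coefficient twist by $\nu_{1}^{-j}$ is trivialised on each connected component.

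By Lemma \ref{lem:components}, $Y^{G}(K(p^{n},D))$ is the disjoint union, over $(x,y) \in \Delta_{1,n}\times\Delta_{2}$, of copies of $Y^{G}(K)$. Cohomology of a disjoint union is the product of the cohomologies of the pieces. This gives
\[
H^{3}\big(Y^{G}(K(p^{n},D)),-\big) \cong \prod_{(x,y)} H^{3}\big(Y^{G}(K)_{x,y},-\big),
\]
where $Y^{G}(K)_{x,y}$ denotes the component-union $\Upsilon^{-1}(\{(x,y)\}\times Y^{G}(K))$. Each projection $Y^{G}(K)_{x,y}\to Y^{G}(K)$ is an isomorphism of manifolds. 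The issue is whether it is also an isomorphism of local systems, since the coefficient $\nu_{1}^{-j}$ is not trivial on $K$, only on $K(p^{n},D)$ modulo $p^{n}$.

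To handle this, I would use the base point $g_{x,y}$ to write the isomorphism $Y^{G}(K)_{x,y}\cong Y^{G}(K)$ as translation $[g,z]\mapsto[g,z]$ restricted to the components with $\nu_{1}\times\nu_{2}=(x,y)$. On sections I would then define
\[
([g,z],v)\mapsto([g,z],\,v\cdot(\norm{\nu_{1}(g_{x,y})}\nu_{1}(g_{x,y,p}))^{-j}),
\]
following the normalisation in $\Tw_{j}$. On the component labelled $(x,y)$, the ratio $\norm{\nu_{1}(g)}\nu_{1}(g_{p})/(\norm{\nu_{1}(g_{x,y})}\nu_{1}(g_{x,y,p}))$ lies in $\nu_{1}(K(p^{n},D))\subset 1+p^{n}\Zp$, and it is invariant under $G^{+}(\Q)$ because $\norm{q}q=1$ for $q\in\Q^{\times}_{>0}$. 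This is the check that the $\nu_{1}^{-j}$ factor becomes a well-defined function on each component, so that the twisted local system restricted to that component matches the pullback of the local system on $Y^{G}(K)$.

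Summing over $(x,y)$ identifies the product with $H^{3}(Y^{G}(K),-)\otimes\mathrm{Maps}(\Delta_{1,n}\times\Delta_{2},\Zp)$. The tensor product commutes with the finite product because $\mathrm{Maps}$ of a finite set is free of finite rank. I would take this as $\Psi_{j}$.

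Canonicity means independence of the choices of $g_{x,y}$, or rather that changing a choice acts through $\Tw_{j}$ in a controlled way. I would verify this using the remark after Definition \ref{def:twist}: changing $g_{x,y}$ alters the scalar by an element of $1+p^{n}\Zp$, which I would argue is absorbed by the level $K(p^{n},D)$. The main obstacle is exactly this point, namely checking that the normalised factor $\norm{\nu_{1}}\nu_{1,p}$ makes the coefficient identification $G^{+}(\Q)$- and $K(p^{n},D)$-equivariant on integral lattices, and not merely rationally.
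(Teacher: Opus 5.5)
Your first step is sound and is the paper's argument in different language: by Lemma \ref{lem:components} the projection $\pi\colon Y^{G}(K(p^{n},D))\to Y^{G}(K)$ is a trivial covering with fibre $\Delta_{1,n}\times\Delta_{2}$. Cohomology of the disjoint union is therefore the product over the pieces $Y'_{x,y}:=\Upsilon^{-1}(\{(x,y)\}\times Y^{G}(K))$.

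The gap is the twisting step. It addresses a non-issue, and in doing so it destroys the canonicity the statement asserts. The sheaf $(\mathscr{V}_{\lambda}\otimes\nu_{1}^{-j})(\Zp)$ on $Y^{G}(K(p^{n},D))$ is by construction $\pi^{*}$ of the sheaf of the same name on $Y^{G}(K)$. The lattice is stable under all of $K_{p}\subset G(\Zp)$ because $\nu_{1}(G(\Zp))\subset\Zp^{\times}$; whether $\nu_{1}^{-j}$ is trivial on the level plays no role. Both sides of $\Psi_{j}$ carry the same, untrivialised, twist $\nu_{1}^{-j}$, so there is nothing to trivialise. Write $\pi_{x,y}$ for the restriction of $\pi$ to $Y'_{x,y}$; it is an isomorphism, and the sheaf on $Y'_{x,y}$ is $\pi_{x,y}^{*}$ of the sheaf downstairs. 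Hence $\pi_{x,y}^{*}$ identifies $H^{3}(Y^{G}(K),-)$ with $H^{3}(Y'_{x,y},-)$ with no base point involved. The paper packages exactly this as $\pi_{*}\pi^{*}\mathscr{F}\cong\mathscr{F}\otimes\pi_{*}\Zp$ (projection formula, with Leray degenerating since $\pi$ is finite), together with the canonical identification $\pi_{*}\Zp\cong\mathrm{Maps}(\Delta_{1,n}\times\Delta_{2},\Zp)$ coming from $\Upsilon$.

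Your map instead multiplies the $(x,y)$-summand by the constant unit $c_{x,y}=(\norm{\nu_{1}(g_{x,y})}\nu_{1}(g_{x,y,p}))^{-j}$. (So your ratio computation is not actually relevant to the map you wrote down.) Your map therefore equals $(1\otimes\Tw_{j})\circ\Psi_{j}$, where $\Psi_{j}$ is the canonical isomorphism above. Changing $g_{x,y}$ changes $c_{x,y}$ by a factor $u\in 1+p^{n}\Zp$, and this is \emph{not} absorbed by the level. Multiplication by $u\neq 1$ is a non-trivial automorphism of $H^{3}(Y^{G}(K),(\mathscr{V}_{\lambda}\otimes\nu_{1}^{-j})(\Zp))$ on non-torsion classes. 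So the obstacle you flag is fatal to the canonicity of your construction, which holds only modulo $p^{n}$.

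It would also be the wrong map for what follows. In Lemma \ref{lem:twistcoh}(2) the twist is applied separately, via $\Psi_{0}\circ(\cup\varphi_{j,n})=(T_{j,n}\otimes\Tw_{-j})\circ\Psi_{j}$. With your $\Psi_{j}$ the bottom arrow there would have to become $T_{j,n}\otimes\Tw_{-2j}$.

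Delete the twisting step and use $\pi_{x,y}^{*}$ directly. The argument is then complete and agrees with the paper's.
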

\begin{proof}
We sketch the proof. Write $Y' = Y^{G}(K(p^{n}, D))$ and $Y = Y^{G}(K)$, let $\pi: Y' \to Y$ for the natural finite covering map and write $\Zp$ for the locally constant sheaf associated to the trivial representation. By the projection formula and the Leray spectral sequence we have 
\[
    H^{i}(Y, \left(\mathscr{V}_{\lambda} \otimes \nu_{1}^{-j}\right)(\Zp) \otimes \pi_{*}\Zp) \cong H^{i}(Y', \left(\mathscr{V}_{\lambda} \otimes \nu_{1}^{-j}\right)(\Zp)).
\]
We have a canonical (independent of the choice of $g_{x, y}$) isomorphism of constant sheaves $\pi_{*}\Zp \cong \mathrm{Maps}(\Delta_{1, n} \times \Delta_{2}, \Zp)$ and thus an isomorphism 
\[
     \Psi_{j}: H^{i}(Y, \left(\mathscr{V}_{\lambda} \otimes \nu_{1}^{-j} \right)(\Zp) \otimes \pi_{*}\Zp) \cong  H^{i}(Y, \left(\mathscr{V}_{\lambda} \otimes \nu_{1}^{-j}\right)(\Zp)) \otimes_{\Zp} \mathrm{Maps}(\Delta_{1, n} \times \Delta_{2}, \Zp).
\]
\end{proof}
The choice of $\{g_{x, y}: (x, y) \in \Delta_{1, n} \times \Delta_{2}\}$ fixes a twisting map 
\[
    \mathrm{tw}_{j}:  \mathrm{Maps}(\Delta_{1, n} \times \Delta_{2}, \Zp) \to  \mathrm{Maps}(\Delta_{1, n} \times \Delta_{2}, \Zp)
\]
as in Definition \ref{def:twist}. 
\begin{lemma}\label{lem:twistcoh}
Let $j \in \Z$, $A \in \{G, H\}$ and $\mathscr{V}$ the locally constant sheaf associated to an algebraic representation of $A(\Qp)$. For any open compact $K \subset A(\A_{f})$ define a section 
\[
    \mathbb{S}^{A}_{j}(g , z) = \left([g, z], (\norm{\nu_{1}(g)}\nu_{1}(g_{p}))^{-j}\right) \in H^{0}(Y^{A}(K), \nu_{1}^{j})
\]
of the locally constant sheaf associated to $\nu_{1}^{j}$. Let $K \subset A(\A_{f})$, then cup product with $\mathbb{S}^{A}_{j}$ defines a map  
\[
    T_{j}^{A}:= \left( \cdot \cup \mathbb{S}^{A}_{j}\right): H^{i}\big(Y^{A}(K), (\mathscr{V} \otimes \nu_{1}^{-j})(\Zp)\big) \to H^{i}\big(Y^{A}(K), \mathscr{V}(\Zp)\big). 
\]
Then
\begin{enumerate}
\item We have 
\[
    \iota^{*} \circ T^{G}_{j}  = T^{H}_{j} \circ \iota^{*}. 
\]
We will therefore henceforth write $\mathbb{S}_{j}$ for $\mathbb{S}^{A}_{j}$ and $T_{j}$ for cup product with this element. 
\item Let $K \subset G(\A_{f})$, let  $\mathbb{S}_{j, n} \in H^{0}(Y^{G}(K), \nu_{1}^{j}/p^{n})$ be the reduction of $\mathbb{S}_{j}$ modulo $p^{n}$ and $T_{j, n}$ the cup product with this element, and suppose $K$ surjects onto $\left(\Z/p^{n}\Z\right)^{\times}\times(\Z/D\Z)^{\times}$ via $\nu_{1} \times \nu_{2}$. Writing $M_n=\mathrm{Maps}(\Delta_{1,n}\times\Delta_2,\Zp)$, the diagram
\[
\begin{tikzcd}[column sep = huge]
    H^{3}\big(Y^{G}(K(p^{n}, D)), (\mathscr{V}_{\lambda} \otimes \nu_{1}^{-j})(\Zp)/p^{n}\big) \arrow[r, "\cup \varphi_{j, n}"] \arrow[d, "\Psi_{j}"'] & H^{3}\big(Y^{G}(K(p^{n}, D)), \mathscr{V}_{\lambda}(\Zp)/p^{n}\big) \arrow[d, "\Psi_{0}"] \\
    H^{3}\big(Y^{G}(K), \left(\mathscr{V}_{\lambda} \otimes \nu_{1}^{-j}\right)(\Zp)/p^{n}\big) \otimes M_{n} \arrow[r, "T_{j, n}\otimes \Tw_{-j}"] & H^{3}\big(Y^{G}(K), \mathscr{V}_{\lambda}(\Zp)/p^{n}\big) \otimes  M_{n}
\end{tikzcd}
\]
commutes.
\item We have the relation 
\[
    T_{j} \circ [u\tau^{n}]_{*, j} = [u\tau^{n}]_{*, 0} \circ T_{j}.
\]
In particular, $U_{p}' \circ T_{j} = T_{j} \circ U'_{p}$. 
\end{enumerate}

\end{lemma}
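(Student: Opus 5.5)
All three parts reduce to checking that $\mathbb{S}_{j}$ is a well-defined global section and then comparing explicit formulas on sections. First I would check well-definedness. For $\gamma \in A^{+}(\Q)$ and $k \in K$, the function $g \mapsto \norm{\nu_{1}(g)}\nu_{1}(g_{p})$ is left-invariant under $\nu_{1}(\gamma) \in \Q^{\times}_{>0}$, because the product formula gives $\norm{q}_{\A_f} q_{p} = q^{-1}\cdot q = 1$ for positive rationals, suitably normalised. Under right translation by $k$ it transforms by $\nu_{1}(k_{p})$. This is exactly the transformation law required of a section of the local system attached to $\nu_{1}^{j}$ in the $\Qp$-construction, where the action is through $k_{p}^{-1}$ on coefficients. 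Hence $\mathbb{S}^{A}_{j}$ defines an element of $H^{0}(Y^{A}(K), \nu_{1}^{j}(\Zp))$, and it is integral since its values lie in $\Zp^{\times}$. Cup product with it, followed by the pairing $\nu_{1}^{-j} \otimes \nu_{1}^{j} \to \Zp$, gives $T_{j}^{A}$.

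\textbf{Part (1).} Since $\nu_{1}^{G} \circ \iota = \nu_{1}^{H}$ by construction (Section \ref{sec:algp}), the formula for $\mathbb{S}_{j}$ shows directly that $\iota^{*}\mathbb{S}^{G}_{j} = \mathbb{S}^{H}_{j}$, as an identity of sections on $\tilde{Y}^{H}$ pulled back along $f$. Compatibility of pullback with cup products, $\iota^{*}(x \cup s) = \iota^{*}x \cup \iota^{*}s$, then gives the claim.

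\textbf{Part (2).} I would unwind $\Psi_{j}$. A class on $Y^{G}(K(p^{n},D))$ is a tuple of classes on the components indexed by $(x,y)$, identified with $Y^{G}(K)$ via the base points $g_{x,y}$. On the component of $g_{x,y}$, the section $\varphi_{j,n}$ is the constant $1$. Meanwhile $\mathbb{S}_{j,n}$ pulled back to that component equals $(\norm{\nu_{1}(g_{x,y})}\nu_{1}(g_{x,y,p}))^{-j}$ times the translate of $\varphi_{j,n}$, modulo $p^{n}$. This uses that $\nu_{1}(K(p^{n},D)_{p}) \subset 1+p^{n}\Zp$, so the section is locally constant mod $p^{n}$ on each component. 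Comparing the two descriptions component by component produces exactly the factor defining $\Tw_{-j}$. The main obstacle is tracking the sign conventions: whether $\mathrm{tw}_{-j}$ or $\mathrm{tw}_{j}$ appears, and the normalisation of $\Delta_{1,n}$ via uniformisers.

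\textbf{Part (3).} Pushforward along translation satisfies a projection formula: $[u\tau^{n}]_{*}(x \cup [u\tau^{n}]^{*}s) = [u\tau^{n}]_{*}x \cup s$. So it suffices to compute $[u\tau^{n}]^{*}\mathbb{S}_{j}$, using the explicit pullback action of Lemma \ref{lem:vecsecs}. The translate is $\mathbb{S}_{j}$ multiplied by the scalar $(\norm{\nu_{1}(u\tau^{n})}\,\nu_{1}(u\tau^{n}) \cdot \nu_{1}(u\tau^{n})^{-1})^{-j}$ coming from the coefficient action of $(u\tau^{n})^{-1}$ on $\nu_{1}^{j}$. Here $\nu_{1}(u)=1$ and $\nu_{1}(\tau^{n}) = p^{n}$, so $\norm{p^{n}}_{p} = p^{-n}$. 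The resulting factor is $p^{nj}$, which precisely converts the normalisation $p^{(a-j)n}$ into $p^{an}$. That establishes $T_{j}\circ[u\tau^{n}]_{*,j} = [u\tau^{n}]_{*,0}\circ T_{j}$. The same computation with $u$ removed, together with the fact that $T_{j}$ commutes with the restriction and norm maps $[1]^{*}$ and $[1]_{*}$ (again by the projection formula and invariance of $\mathbb{S}_{j}$ under change of level), gives $U'_{p}\circ T_{j} = T_{j}\circ U'_{p}$.
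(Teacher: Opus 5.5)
Your proposal is correct and carries out the direct unwinding of definitions that the paper compresses into ``Clear by construction'': well-definedness of $\mathbb{S}_j$ via the product formula, $\nu_1^G\circ\iota=\nu_1^H$ for (1), the component-wise identity $\mathbb{S}_{j,n}=x^{j}\varphi_{j,n}$ for (2), and the factor $\norm{\nu_1(u\tau^n)}^{-j}=p^{nj}$ for (3). One point needs care in (3): the unnormalised $[u\tau^n]_*$ is only defined after inverting $p$, so to get the identity on integral (possibly torsion) cohomology, read your scalar computation as an equality between the normalised maps of integral sheaves, which it is.
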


\begin{proof}
Clear by construction. 
\end{proof}

Note that the diagram in part (2) is canonical since the ambiguity in the choices defining $\mathrm{tw}_{j}$ are $1 \bmod p^{n}$. Indeed, $\mathrm{tw}_{j}$ is given explicitly modulo $p^{n}$ by multiplication by the function 
\[
    [(x, y)] \mapsto x^{j},
\]
where we identify $\Delta_{1, n} \times \Delta_{2} \cong \left(\Z/p^{n}\Z\right)^{\times} \times \left(\Z/D\Z\right)^{\times}$ via $(\delta_{1}, \delta_{2}) \mapsto (x, y) = \left(\left(\norm{\delta_{1}}\delta_{1, p}\right)^{-1}, \left(\norm{\delta_{2}}\delta_{2, D}\right)^{-1} \right) \bmod \left(1 + p^{n}\hat{\Z}\right) \times \left(1 + D\hat{\Z}\right)$, with $(\delta_{1}, \delta_{2}) \in \left(\mathbb{A}_{f}^{\times}\right)^{2}$ a lift of an element of $\Delta_{1, n} \times \Delta_{2}$.

\subsection{Automorphic cohomology classes} \label{sec:autoclasses}

Let $\Pi$ be a RACAR of $\GL_{3}(\A)$ of weight $\lambda = (a, 0, -a)$, and let $K \subset \GL_{3}(\A_{f})$ be open compact with $\Pi_{f}^{K} \neq 0$. Via cuspidal cohomology and the choice of generator $\zeta_{\infty}$ of \eqref{eq:piinfty} there is a Hecke-equivariant injection (see \cite[(3.3)]{loefflerWilliams2021p})
\[
    \phi_{\Pi}^{K}: \mathcal{W}(\Pi_{f})^{K} \hookrightarrow H^{2}_{c}(Y^{\GL_{3}}(K), \mathscr{V}_{\lambda, \Q}^{\vee}(\C)),
\]
compatible with change of $K$, assembling to a $\GL_{3}(\A_{f})$-equivariant map
\[
    \phi_{\Pi}: \mathcal{W}(\Pi_{f}) \hookrightarrow H^{2}_{c}(Y^{\GL_{3}}, \mathscr{V}_{\lambda, \Q}^{\vee}(\C));
\]
we write $H^{2}_{c}(\Pi, \C)$ for its image.

\subsubsection{Periods and rationality} \label{sec:pernrat}
There is a minimal number field $E$, the field of definition of $\Pi$, such that
\[
    H^{2}_{c}(\Pi, E) := H^{2}_{c}(\Pi, \C) \cap H^{2}_{c}(Y^{\GL_{3}}, \mathscr{V}_{\lambda, \Q}^{\vee}(E))
\]
is non-zero and gives an $E$-structure on $H^{2}_{c}(\Pi, \C)$; by strong multiplicity one, $E$ may be taken to be the field generated by the Hecke eigenvalues of $\Pi$ at unramified primes. Comparing with the Whittaker $E$-structure defines, by \cite[Proposition\ 3.1]{clozel2}, a period $\Theta_{\Pi} \in \C^{\times}$ with
\[
    \phi_{\Pi}\left(\mathcal{W}(\Pi_{f}, E)\right) = \Theta_{\Pi} \cdot H^{2}_{c}(\Pi, E).
\]
Having fixed $\zeta_{\infty}$, the period $\Theta_{\Pi}$ is well defined modulo $E^{\times}$ and rescaling $\zeta_{\infty}$ rescales $\Theta_{\Pi}$.

\subsubsection{Integral structures on cohomology}
Let $v \mid p$ be a place of $E$, with completion $E_{v}$. The $\mathcal{O}_{E_{v}}$-points of $V^{\vee}_{\lambda, \Z}$ carry an action of $\GL_{3}(\Zp)$, giving a local system $\mathscr{V}^{\vee}_{\lambda}(\mathcal{O}_{E_{v}})$ of $\mathcal{O}_{E_{v}}$-modules on $Y^{\GL_{3}}(\mathcal{U})$ as in Section \ref{sec:localsystems}. The group $H^{2}_{c}(Y^{\GL_{3}}(\mathcal{U}), \mathscr{V}_{\lambda}^{\vee}(\mathcal{O}_{E_{v}}))$ is a finitely generated $\mathcal{O}_{E_{v}}$-module, and its quotient by torsion is a lattice in $H^{2}_{c}(Y^{\GL_{3}}(\mathcal{U}), \mathscr{V}_{\lambda}^{\vee}(E_{v}))$. As in \cite[\S 3.3.3]{loefflerWilliams2021p}, given a finite set $\{W_{i}\} \subset \mathcal{W}(\Pi_{f}, E)^{\mathcal{U}}$ and a finite extension $L/\Qp$ into which $E$ embeds, the classes
\[
    \phi_{\Pi}(W_{i})/\Theta_{\Pi} \in H^{2}_{c}(Y^{\GL_{3}}(\mathcal{U}), \mathscr{V}_{\lambda}^{\vee}(L))
\]
can, after rescaling $\Theta_{\Pi}$ by a $p$-adic unit times an element of $E^{\times}$, be normalised to lie in
\[
H^{2}_{c}(Y^{\GL_{3}}(\mathcal{U}), \mathscr{V}_{\lambda}^{\vee}(\mathcal{O}_{L}))/\{\text{torsion}\}.
\]
Such a normalisation depends on $\{W_{i}\}$, which will be harmless as our test data is fixed once and for all in Section \ref{sec:padicL}.

\section{Eisenstein series and Eisenstein classes for $\GL_{2}$} \label{sec:eis}

In this section we recall, largely following \cite[\S 5]{loefflerWilliams2021p}, the theory of Eisenstein series attached to adelic Schwartz functions, the motivic Eisenstein classes of Be\u{\i}linson, and their $p$-adic interpolation due to Kings. Throughout this section, symmetric spaces are for $\GL_{2}$ unless indicated otherwise.

\subsection{Schwartz functions}
For a field $K$ of characteristic $0$, let $\mathcal{S}(\A_{f}^{2}, K)$ denote the space of locally constant, compactly supported functions $\A_{f}^{2} \to K$, and $\mathcal{S}_{0}(\A_{f}^{2}, K)$ the subspace of $\Phi$ with $\Phi(0, 0) = 0$. The group $\GL_{2}(\A_{f})$ acts on these spaces by $(g \cdot \Phi)(v) = \Phi(vg)$.

If $\chi$ is a Dirichlet character, let $R_{\chi}$ denote the projection to the $\hat{\chi}^{-1}$-isotypical component:
\[
    R_{\chi}(\Phi_{f})(x, y) := \int_{a \in \hat{\Z}^{\times}}\hat{\chi}(a)\,\Phi_{f}(ax, ay)\,d^{\times}a.
\]

\subsection{Eisenstein series} \label{sec:eisseries}
Let $\Phi = \Phi_{\infty}\Phi_{f} \in \mathcal{S}(\A^{2}, \C)$ (Schwartz at infinity) and let $\hat{\chi}$ be a finite-order Hecke character. For $g \in \GL_{2}(\A)$ and $\mathfrak{Re}(s) \gg 0$, define the Godement--Siegel section and Eisenstein series
\[
    f^{\Phi}(g; \hat{\chi}, s) := \|\det g\|^{s}\int_{\A^{\times}}\Phi\left((0, a)g\right)\hat{\chi}(a)\|a\|^{2s}\,d^{\times}a, \qquad
    E^{\Phi}(g; \hat{\chi}, s) := \sum_{\gamma \in B_{\GL_{2}}(\Q)\backslash\GL_{2}(\Q)}f^{\Phi}(\gamma g; \hat{\chi}, s),
\]
which have meromorphic continuation to all $s$. The series is analytic if $\Phi(0,0) = \hat{\Phi}(0,0) = 0$ or if $\hat{\chi}$ is ramified at some finite place. Classically, for $\Phi_{f} \in \mathcal{S}_{(0)}(\A_{f}^{2}, K)$ and $\tau$ in the upper half-plane one sets
\[
    E^{j + 2}_{\Phi_{f}}(\tau; s) := \frac{\Gamma(s + \tfrac{j+2}{2})}{(-2\pi i)^{j + 2}\pi^{s - \frac{j+2}{2}}}\sum_{(m, n) \in \Q^{2}\setminus (0,0)}\frac{\Phi_{f}(m, n)\, y^{s - \frac{j+2}{2}}}{(m\tau + n)^{j + 2}|m\tau + n|^{2s - j - 2}},
\]
extended to a function $E^{j+2}_{\Phi_{f}}(g; s)$ on $\GL_{2}(\A)$ by $\GL_{2}(\A_{f})$-equivariance in $\Phi_{f}$ and the standard automorphy recipe at infinity \cite[\S I]{weil71}, and we further set $E^{j+2}_{\Phi_{f}}(g; \chi, s) := E^{j+2}_{R_{\chi}(\Phi_{f})}(g; s)$. Taking
\begin{equation} \label{eq:phiinfty}
    \Phi_{\infty}^{j + 2}(x, y) := 2^{-1 - j}(x + iy)^{j + 2}\exp(-\pi(x^{2} + y^{2})),
\end{equation}
the adelic and classical series are related by $E^{\Phi}(g; \hat{\chi}, s) = \|\det g\|^{s}E^{j+2}_{\Phi_{f}}(g; \chi, s)$ for $\Phi = \Phi_{\infty}^{j+2}\Phi_{f}$ \cite[Proposition\ 5.2, Cor.\ 5.3]{loefflerWilliams2021p}. We will be interested exclusively in the special value $s = -j/2$, and set
\begin{equation} \label{eq:eisspecial}
    \mathcal{E}^{j + 2, \chi}_{\Phi_{f}}(g) := E^{j + 2}_{\Phi_{f}}\left(g; \chi, -\tfrac{j}{2}\right) = \|\det g\|^{j/2}\,E^{R_{\chi}(\Phi)}\left(g; \hat{\chi}, -\tfrac{j}{2}\right),
\end{equation}
which depends $\GL_{2}(\A_{f})$-equivariantly on $\Phi_{f}$ and transforms as a vector in the principal series $I(\|\cdot\|^{-1/2}, \hat{\chi}^{-1}\|\cdot\|^{j + 1/2})$.

\subsection{Betti--Eisenstein classes} \label{sec:bettieis}
For $j \geq 0$ let $\mathscr{V}_{(0, -j)}$ denote the local system on $Y^{\GL_{2}}$ attached to the $\GL_{2}$-representation $V^{\GL_{2}}_{(0, -j)}$ of highest weight $(0, -j)$, as in Section \ref{sec:localsystems}. The main geometric input to our construction is the following theorem of Be\u{\i}linson \cite{beilinson86}, in the form given in \cite[Theorem\ 5.5, Cor.\ 5.6]{loefflerWilliams2021p} (see also \cite[\S 7]{LZGsp}): for every $j \geq 0$ and every level $U$, there is a canonical $\GL_{2}(\A_{f})$-equivariant map, the \emph{Betti--Eisenstein symbol},
\[
    \mathcal{S}_{(0)}(\A_{f}^{2}, \Q)^{U} \to H^{1}\left(Y^{\GL_{2}}(U), \mathscr{V}_{(0, -j)}(\Q)\right), \qquad \Phi_{f} \mapsto \Eis^{j}_{\Phi_{f}},
\]
compatible with Hecke operators and change of level, whose image under the Betti--de Rham comparison is the differential form $-\mathcal{E}^{j + 2}_{\Phi_{f}}(g)\,(dz)^{\otimes j}\,d\tau$. Here the notation $\mathcal{S}_{(0)}$ means $\mathcal{S}_{0}$ if $j = 0$ and $\mathcal{S}$ otherwise.

These classes are not integral, but they admit integral `$c$-smoothings': for $c > 1$ coprime to $6$ and to the level, taking $\Phi_{f}$ spherical at $c$ i.e. of the form $\Phi^{(c)}_{f} \times \prod_{\ell \mid c}\mathrm{ch}\left(\Z_{\ell}^{2}\right)$, there are classes ${}_{c}\Eis^{j}_{\Phi_{f}} \in H^{1}(Y^{\GL_{2}}(U), \mathscr{V}_{(0,-j)}(\Zp))$, compatible with Hecke operators and change of level, satisfying
\begin{equation} \label{eq:csmooth}
    {}_{c}\Eis^{j}_{\Phi_{f}} = \left(c^{2} - c^{-j}\begin{psmallmatrix} c & \\ & c\end{psmallmatrix}^{-1}\right)\Eis^{j}_{\Phi_{f}}
\end{equation}
after inverting $c$, where $\begin{psmallmatrix} c & \\ & c \end{psmallmatrix} \in \GL_{2}(\A_{f}^{(c)})$, see \cite[Theorem\ 5.7]{loefflerWilliams2021p}.

\subsection{The Eisenstein--Iwasawa class} \label{sec:EIclass}
Set $\Lambda = \Zp[[\Zp^{\times}]]$, fix a prime-to-$p$ level $\mathcal{U}^{\GL_{2}, (p)}$, and integer $c$ coprime to $6p$ and to all level data and a prime-to-$p$ Schwartz function $\Phi^{(p)} \in \mathcal{S}(\A_{f}^{(p), 2}, \Z)^{\mathcal{U}^{\GL_{2}, (p)}}$ spherical at $c$. For $t \geq 1$ let $\Phi_{p, t} := \mathrm{ch}\left((0, 1) + p^{t}\Zp^{2}\right)$ and $\Phi_{f, t} := \Phi^{(p)}\Phi_{p, t}$, which is stable under $\mathcal{U}^{\GL_{2}}_{t}$ and compatible with the trace maps as $t$ varies. The resulting classes with $j = 0$ assemble into the \emph{Eisenstein--Iwasawa class}
\[
    {}_{c}\mathcal{EI}_{\Phi^{(p)}} := \left({}_{c}\Eis^{0}_{\Phi_{f, t}}\right)_{t \geq 1} \in H^{1}_{\Iw}\left(Y^{\GL_{2}}(\mathcal{U}^{\GL_{2}}_{\infty}), \Zp\right) := \varprojlim_{t}H^{1}\left(Y^{\GL_{2}}(\mathcal{U}^{\GL_{2}}_{t}), \Zp\right).
\]
This Iwasawa cohomology group is naturally a $\Lambda$-module and admits moment maps
\[
    \mathrm{mom}^{j}_{t}: H^{1}_{\Iw}\left(Y^{\GL_{2}}(\mathcal{U}^{\GL_{2}}_{\infty}), \Zp\right) \to H^{1}\left(Y^{\GL_{2}}(\mathcal{U}^{\GL_{2}}_{t}), \mathscr{V}_{(0, -j)}(\Zp)\right)
\]
for each $j \geq 0$ and $t \geq 1$ (see \cite[\S 9.1]{LZGsp}), and the fundamental theorem of Kings on the $\Lambda$-adic interpolation of Eisenstein classes (see \cite{kings2013eisenstein}, or \cite{KLZ} for a summary in the present notation) states that
\begin{equation} \label{eq:kings}
    \mathrm{mom}^{j}_{t}\left({}_{c}\mathcal{EI}_{\Phi^{(p)}}\right) = {}_{c}\Eis^{j}_{\Phi_{f, t}} \qquad \text{for all } j \geq 0, \ t \geq 1.
\end{equation}
All of the above is compatible with the action of $\GL_{2}(\A_{f}^{(pc)})$.

\section{$p$-adic distributions} \label{sec:analytic}

In this section we recall the  spaces of $p$-adic distributions and the interpolation results needed to construct our $p$-adic $L$-function.

\subsection{Spaces of analytic functions and distributions}
Let $L/\Qp$ be a complete extension. 

\begin{definition}
We define the following spaces:
\begin{itemize}
    \item For an integer $a \geq 0$ let $LP^{[0, a]}(\Zp^{\times}, L)$ denote the space of locally polynomial functions on $\Zp^{\times}$ of degree $\leq a$.
    \item Let $\mathscr{D}_{\mathrm{alg}}^{[0, a]}(\Zp^{\times}, L)$ denote the $L$-dual of $LP^{[0, a]}(\Zp^{\times}, L)$.
    \item Let $LA(\Zp^{\times}, L)$ denote the space of locally analytic functions $f: \Zp^{\times} \to L$. We equip $LA(\Zp^{\times}, L)$ with the inductive limit topology given by the Banach subspaces $LA_{h}(\Zp^{\times}, L)$ of functions which are analytic on discs of radius $p^{-h},$ for $h\geq 1,$ equipped with the following norm: For $f \in LA_h(\Zp^{\times}, L)$ given on the disc $z + p^{h}\Zp$, $z \in \Zp^{\times}$, by $f(z + p^{h}\Zp) = \sum_{k \geq 0}a_{z, k}z^{k}$, define 
    \[
        \norm{f}_{LA_{h}} := \mathrm{sup}_{z \in \Zp^{\times}}\mathrm{sup}_{k}\vert a_{z, k}\vert.
    \]
    \item Let $\mathscr{D}_{\ell a}(\Zp^{\times}, L)$ denote the continuous $L$-dual of $LA(\Zp^{\times}, L)$, referred to as the space of \textit{locally analytic distributions} $\mu: LA(\Zp^{\times}, L) \to L$ on $\Zp^{\times}$. It has the structure of a Fréchet space defined by the family of norms  $\norm{f}_{LA_{h}}.$ We equip  $\mathscr{D}_{\ell a}(\Zp^{\times}, L)$ with the action of $\delta \in \Zp^{\times}$ given by 
    \[
        \int_{\Zp^{\times}}f(x)(\delta \cdot \mu)(x) = \int_{\Zp^{\times}} f(\delta^{-1}x)\mu(x),
    \]
    where for $f \in LA(\Zp^{\times}, L)$ we use the standard notation $\int_{\Zp^{\times}}f(x)\mu(x) := \mu(f)$. 
\end{itemize}
\end{definition}

For $h \in \mathbb{R}_{\geq 0}$ there is a filtration $\mathscr{D}_{h}(\Zp^{\times}, L) \subset \mathscr{D}_{\ell a}(\Zp^{\times}, L)$ by Banach spaces, defined in \cite[II.3.1]{colmez}. For our purposes we are interested in the following properties of these spaces.

\begin{proposition} \label{prop:interp}
Let $h \in \mathbb{R}_{\geq 0}$ and let  $\mu \in \mathscr{D}_{h}(\Zp^{\times}, L)$. Then 
\begin{enumerate}
    \item Suppose $\mu_{p^{\infty}} \subset L$ and that for all Dirichlet characters $\chi$ of $p$-power conductor and for all $0 \leq j \leq \lfloor h \rfloor$ we have 
    \[
        \int_{\Zp^{\times}}\chi(x)x^{j}\mu(x) = 0,
    \]
    then $\mu \equiv 0$. 
    \item There is $C \in \R$ such that for all $z \in \Zp^{\times}$, $n \geq 1 , j \geq 0$
    \[
        \left\vert \int_{z + p^{n}\Zp}\left(\frac{x - z}{p^{n}}\right)^{j}\mu(x)\right\vert \leq Cp^{hn}.
    \]
    \item Suppose for some $a \geq \lfloor h \rfloor$, all $0 \leq j \leq a$, and $n \geq 0$ we have $\mu_{\mathrm{alg}} \in \mathscr{D}_{\mathrm{alg}}^{[0, a]}(\Zp^{\times}, L)$ satisfying  
    \[
        \left\vert \int_{z + p^{n}\Zp}\left(\frac{x - z}{p^{n}}\right)^{j}\mu_{\mathrm{alg}}(x)\right\vert \leq Cp^{hn}.
    \]
    Then there is a unique $\mu \in \mathscr{D}_{h}(\Zp^{\times}, L)$ whose restriction to $LP^{[0, a]}(\Zp^{\times}, L)$ is $\mu_{\mathrm{alg}}$.
\end{enumerate}
\end{proposition}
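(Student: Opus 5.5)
This proposition collects standard facts about Colmez's spaces $\mathscr{D}_{h}(\Zp^{\times}, L)$ of order-$h$ distributions. The plan is to deduce each part from the definition of $\mathscr{D}_{h}$ in \cite[II.3]{colmez}, via the Amice--Vélu/Vi\v{s}ik argument.

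We use the characterisation of $\mathscr{D}_{h}$: $\mu$ lies in $\mathscr{D}_{h}$ if and only if $\mu$ extends continuously to the space of $C^{h}$ functions. Equivalently, $\sup_{n,z,j} p^{-hn}\left|\int_{z+p^{n}\Zp}\left(\frac{x-z}{p^{n}}\right)^{j}\mu\right|$ is finite, after normalising by $p^{-jn}$ the binomial-type basis of the Mahler/Amice expansion. With this characterisation, part (2) is essentially the definition. Concretely, the function $\mathbf{1}_{z+p^{n}\Zp}(x)\left(\frac{x-z}{p^{n}}\right)^{j}$ has $\mathscr{C}^{h}$-norm bounded by $C p^{hn}$, uniformly in $j$ and $z$. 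This bound follows from the norm estimate on the Banach space $LA_{n}$ of functions analytic on discs of radius $p^{-n}$, since the coefficients in the expansion around $z$ have absolute value at most $1$. Continuity of $\mu$ on the Banach space $\mathscr{C}^{h}$ then gives the bound.

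For part (3), we first extend $\mu_{\mathrm{alg}}$ to locally analytic functions. Take $f\in LA_{m}$ and, for each $n\ge m$, write its Taylor expansion on each disc $z+p^{n}\Zp$. Define
\[
\mu_{n}(f)=\sum_{z \bmod p^{n}}\sum_{j\le a}\frac{f^{(j)}(z)}{j!}\int_{z+p^{n}\Zp}(x-z)^{j}\mu_{\mathrm{alg}}.
\]
The Taylor tail has degree $>a \geq \lfloor h\rfloor$. Writing $(x-z)^{j}=p^{nj}\left(\frac{x-z}{p^{n}}\right)^{j}$, the given bounds show that $\mu_{n+1}(f)-\mu_{n}(f)=O(p^{(h-a-1)n}\|f\|)$. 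This tends to $0$ because $a+1>h$, so the limit $\mu(f)$ exists and defines an element of $\mathscr{D}_{h}$. On $LP^{[0,a]}$ the sequence is constant, so $\mu$ extends $\mu_{\mathrm{alg}}$. Uniqueness follows because any $\mu'\in\mathscr{D}_{h}$ agreeing with $\mu_{\mathrm{alg}}$ satisfies the same estimate $|\mu'(f)-\mu_{n}(f)|\to 0$, using part (2) applied to $\mu'$. This step is the main obstacle: the estimate is the classical Vi\v{s}ik lemma, and the uniformity in $z$ and in the coefficients requires some care.

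Part (1) follows from part (3). The characters $\chi(x)x^{j}$ with $\chi$ of $p$-power conductor and $0\le j\le\lfloor h\rfloor$ span $LP^{[0,\lfloor h\rfloor]}(\Zp^{\times},L)$; here we use $\mu_{p^{\infty}}\subset L$, so that characters of $(\Z/p^{n})^{\times}$ span the locally constant functions of level $p^{n}$. Hence $\mu$ restricts to zero on $LP^{[0,\lfloor h\rfloor]}$. The zero distribution is also an element of $\mathscr{D}_{h}$ with this restriction, so the uniqueness in part (3), taken with $a=\lfloor h\rfloor$, gives $\mu=0$.
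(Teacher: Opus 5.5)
Your proposal is correct and has the same structure as the paper's proof. The paper cites Colmez's Th\'eor\`eme II.3.2 for parts (2) and (3), passing from $\Zp$ to $\Zp^{\times}$ by extension by zero. It then deduces (1) exactly as you do: the functions $\chi(x)x^{j}$ span $LP^{[0,\lfloor h\rfloor]}(\Zp^{\times},L)$, and uniqueness in (3) with $a=\lfloor h\rfloor$ finishes the argument.

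Your Vi\v{s}ik-style telescoping argument for (3) is the standard proof of that cited theorem, and your part (2) still rests on Colmez's characterisation of $\mathscr{D}_h$. The same telescoping estimates also give $|\mu(f)|\le C'p^{hm}\|f\|_{LA_m}$, which is the step placing your limit in $\mathscr{D}_h$; you leave this implicit.
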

\begin{proof}
Parts (2) and (3) for distributions on $\Zp$ are \cite[Th\'eor\`eme II.3.2]{colmez}; we can deduce the result for $\Zp^{\times}$ by considering $\mu$ as a distribution on $\Zp$ supported on $\Zp^{\times}$. For the first part \cite[ Proposition I.3.4]{colmez} implies that Dirichlet characters $\chi$ of $p$-power conductor are a basis for the space of locally constant functions on $\Zp^{\times}$ (as for parts (2) and (3), the Proposition of \textit{op. cit.} is stated for locally polynomial functions on $\Zp$, but we can deduce the result for $\Zp^{\times}$ by restriction) and thus that functions of the form $x \mapsto \chi(x)x^{j}$ for $0 \leq j \leq \lfloor h \rfloor$ are a basis for $LP^{[0, \lfloor h \rfloor]}(\Zp^{\times}, L)$. Then parts (2) and (3) say that $\mu$ is uniquely determined by its restriction to $LP^{[0, \lfloor h \rfloor]}(\Zp^{\times}, L)$, which is identically zero by assumption. 
\end{proof}

\section{Constructing the distribution} \label{sec:pairing}

We now assemble the ingredients of the previous sections into a $p$-adic distribution attached to a small slope $P_{1}$-refined RACAR. Throughout this section we fix:
\begin{itemize}
    \item A RACAR $\Pi$ of $\GL_{3}(\A)$ of weight $\lambda = (a, 0, -a)$, together with a regular unramified $P_{1}$-refinement $\sigma_{p}$, with $\alpha_{p} = \sigma_{p}(p)$ and $r = r(\tilde{\Pi})$ as in Proposition \ref{prop:newvector}. We write
    \[
        \alpha := p^{a + 1}\alpha_{p}, \qquad h := v_{p}(\alpha) \in [0, 2a + 2]
    \]
    for the normalised $U_{p, 1}$-eigenvalue and its slope. We further assume that $\tilde{\Pi}$ satisfies the small slope hypothesis $h < a + 1$, noting that this enters only in Theorem \ref{thm:growth}.
    \item An integer $c > 1$ coprime to $6p$.
    \item A prime-to-$p$ Schwartz function $\Phi^{(p)} \in \mathcal{S}(\A_{f}^{(p), 2}, \Z)$ spherical at $c$ and prime-to-$p$ levels $\mathcal{U}^{G, (p)} = \mathcal{U}^{\mathrm{GL}_{3}, (p)} \times \prod_{\ell \neq p}\Z_{\ell}^{\times}$, $\mathcal{U}^{H, (p)} = \mathcal{U}^{G, (p)} \cap H(\A_{f}^{(p)})$ and  $\mathcal{U}^{\GL_{2}, (p)} = \mathrm{pr}_{\mathrm{GL}_{2}}\left(\mathcal{U}^{H, (p)}\right)$, where $\mathrm{pr}_{\mathrm{GL}_{2}}$ is the projection to the $\mathrm{GL}_{2}$-factor of $H$, such that $\Phi^{(p)}$ is fixed by $\mathcal{U}^{H, (p)}$. We assume the resulting levels are neat and spherical at $c$, shrinking $\mathcal{U}^{\mathrm{GL}_{3},(p)}$ if necessary.
    \item An auxiliary even Dirichlet character $\eta_{2}$ of conductor $D$ prime to $p$ and $c$, valued in $\mathcal{O}_{L}$, with $\mathcal{U}^{G, (p)}(1, D) \subset \ker(\hat{\eta}_{2}\circ\nu_{2})$. The role of this character is solely in removing the smoothing factor (Theorem \ref{thm:growth}(2)), and its only effect on the final result is on the period $\Omega^{-}_{\Pi}$,
    
\end{itemize}

\subsection{The classes $z^{[a, j]}_{n}$} \label{sec:classesdef}

Recall the Eisenstein--Iwasawa class ${}_{c}\mathcal{EI}_{\Phi^{(p)}} \in H^{1}_{\Iw}(Y^{\GL_{2}}(\mathcal{U}^{\GL_{2}}_{\infty}), \Zp)$ of Section \ref{sec:EIclass}. Pulling back along the natural maps $\tilde{Y}^{H} \xrightarrow{f} Y^{H} \to Y^{\GL_{2}}$ of \eqref{eq:roof} yields a class. We define two maps
\begin{equation} \label{eq:iotamap}
    \iota_{a, j, n}: H^{1}_{\Iw}\left(Y^{\mathrm{GL}_{2}}(\mathcal{U}_{\infty}^{\mathrm{GL}_{2}}(1, D)), \Zp\right) \longrightarrow H^{3}\left(Y^{\GL_{3}}(\mathcal{V}^{\GL_{3}}_{n}), \left(\mathscr{V}_{\lambda} \otimes \nu_{1}^{-j}\right)(\Zp)\right) \otimes \mathrm{Maps}\left(\Delta_{1, n} \times \Delta_{2}, \mathcal{O}\right)
\end{equation}
and 
\begin{equation} \label{eq:iotaLmap}
    \iota_{a, j, n, t}^{L} : H^{1}\left(Y^{\mathrm{GL}_{2}}(\mathcal{U}^{\mathrm{GL}_{2}}_{t}), \mathscr{V}_{(0, -j)}(\Zp)\right) \longrightarrow H^{3}\left(Y^{\GL_{3}}(\mathcal{V}^{\GL_{3}}_{n}), \mathscr{V}_{\lambda}(\Zp)\right) \otimes \mathrm{Maps}\left(\Delta_{1, n} \times \Delta_{2}, \mathcal{O}\right),
\end{equation}
the first given by the composition  
\begin{align*} \label{diag:map}
    H^{1}_{\Iw}(Y^{\mathrm{GL}_{2}}(\mathcal{U}^{\mathrm{GL}_{2}}_{\infty}),\Zp)&\to H^{1}_{\Iw}\left(\tilde{Y}^{H}(Q_{H,\infty,n}^{0}(p^{n}, D)), \Zp\right)  \\
    & \xrightarrow{\cup f^{a, j}} H^{1}_{\Iw}\left(\tilde{Y}^{H}(Q_{H, \infty, n}^{0}(p^{n}, D)), \iota^{*}\left(\mathscr{V}_{\lambda} \otimes \nu_{1}^{-j}\right)(\Zp)\right) \\
    & \xrightarrow{\iota_{*}} H^{3}_{\Iw}(Y^{G}(Q_{H, \infty, n}^{0}(p^{n}, D)), \left(\mathscr{V}_{\lambda} \otimes \nu_{1}^{-j}\right)(\Zp)) \\
    & \xrightarrow{[u\tau^{n}]_{*, j}} H^{3}_{\mathrm{Iw}}\left(Y^{G}((u\tau^{n})^{-1}Q_{H, \infty, n}^{0}(p^{n}, D)(u\tau^{n})), \left(\mathscr{V}_{\lambda} \otimes \nu_{1}^{-j}\right)(\Zp)\right) \\
    & \xrightarrow{\mathrm{pr}_{n}} H^{3}\left(Y^{G}(\mathcal{V}^{G}_{n}(p^{n}, D)), \left(\mathscr{V}_{\lambda} \otimes \nu_{1}^{-j}\right)(\Zp)\right) \\
    &\xrightarrow{\Psi_{j}} H^{3}_{\Iw}\left(Y^{G}(\mathcal{V}^{G}_{n}), \left(\mathscr{V}_{\lambda} \otimes \nu_{1}^{-j}\right)(\Zp)\right) \otimes \mathrm{Maps}\left(\Delta_{1, n} \times \Delta_{2}, \Zp\right),
\end{align*}
where we use the convention that when an $H$-subgroup is the level at $p$ in $G$-Iwasawa cohomology, the tame level is taken to be $\mathcal{U}^{G, (p)}$. The second map is given for any $t \geq \mathrm{max}\{n, r\}$ by 
\begin{align*}
 H^{1}(Y^{\mathrm{GL}_{2}}(\mathcal{U}^{\mathrm{GL}_{2}}_{t}), \mathscr{V}_{(0, -j)}(\Zp))&\to H^{1}\left(\tilde{Y}^{H}(Q_{H,t, n}^{0}(p^{n}, D)), \mathscr{V}_{(0, -j; 0)}(\Zp)\right) \\ 
&\xrightarrow{T_{j}} H^{1}\left(\tilde{Y}^{H}(Q_{H,t, n}^{0}(p^{n}, D)), \mathscr{V}_{(j, 0; -j)}(\Zp)\right) \\
&\xrightarrow{\mathrm{br}^{[a,j]}}
 H^{1}(\tilde{Y}^{H}(Q_{H,t, n}^{0}(p^{n}, D)), \iota^{*}\mathscr{V}_{\lambda}(\Zp))
 \\
    & \xrightarrow{\iota_{*}} H^{3}\left(Y^{G}(u\mathcal{U}^{G}_{n}(p^{n}, D)u^{-1}), \mathscr{V}_{\lambda}(\Zp)\right) \\
    & \xrightarrow{[u\tau^{n}]_{*, 0}} H^{3}\left(Y^{G}(\mathcal{V}^{G}_{n}(p^{n}, D)), \mathscr{V}_{\lambda}(\Zp)\right) \\
    &\xrightarrow{\Psi_{0}} H^{3}\left(Y^{G}(\mathcal{V}^{G}_{n}), \mathscr{V}_{\lambda}(\Zp)\right) \otimes \mathrm{Maps}\left(\Delta_{1, n} \times \Delta_{2}, \Zp\right)
\end{align*}
The map $\iota_{a, j, n}$ is useful for proving $p$-adic interpolation, whereas the map $\iota^{L}_{a, j, n, t}$ is useful for relations to $L$-values. We will routinely abuse notation and also use $\iota^{L}_{a, j, n, t}$ to refer to the composition following initial pullback from $Y^{\mathrm{GL}_{2}}$.

\begin{lemma} \label{lem:mapcomparison}
The following diagram commutes.
\[
\begin{tikzcd}
    H^{1}_{\Iw}\left(\tilde{Y}^{H}(\mathcal{U}_{\infty}^{H}(1, D)), \Zp\right) \arrow[d, "\mathrm{mom}^{H}_{j, t}"] \arrow[r, "\iota_{a, j, n}"] & H^{3}\left(Y^{G}(\mathcal{V}^{G}_{n}), \left(\mathscr{V}_{\lambda} \otimes \nu_{1}^{-j}\right)(\Zp)\right) \otimes \mathrm{Maps}\left(\Delta_{1, n} \times \Delta_{2}, \Zp\right) \arrow[d, "T_{j} \otimes 1"]  \\
     H^{1}\left(\tilde{Y}^{H}(\mathcal{U}_{t}^{H}(1, D)), \mathscr{V}_{(0, -j; 0)}(\Zp)\right)  \arrow[r, "\iota^{L}_{a, j, n, t}"] & H^{3}\left(Y^{G}(\mathcal{V}^{G}_{n}), \mathscr{V}_{\lambda}(\Zp)\right) \otimes \mathrm{Maps}\left(\Delta_{1, n} \times \Delta_{2}, \Zp\right)
\end{tikzcd}
\]
In particular, this composition is independent of $t$. 
\end{lemma}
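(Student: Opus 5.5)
We argue by factoring both composites into the same sequence of elementary steps, then commuting the twist operator $T_{j}$ and the moment map past each step. First, rewrite the top route $(T_{j}\otimes 1)\circ\iota_{a,j,n}$. By Lemma \ref{lem:twistcoh}(3), $T_{j}\circ[u\tau^{n}]_{*,j} = [u\tau^{n}]_{*,0}\circ T_{j}$. The map $T_{j}$ also commutes with $\mathrm{pr}_{n}$ and with $\iota_{*}$: the section $\mathbb{S}_{j}$ is compatible with pullback along changes of level and along $\iota$ by Lemma \ref{lem:twistcoh}(1), so the projection formula gives $T^{G}_{j}\circ\iota_{*}=\iota_{*}\circ T^{H}_{j}$. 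For the descent to group rings we use Lemma \ref{lem:twistcoh}(2), reading $T_{j}\otimes 1$ on the target of $\Psi_{j}$ as the integral version of $T_{j,n}\otimes\mathrm{tw}_{-j}$ up to the canonical identification. (Equivalently, this is how $T_{j}$ is to be interpreted on the components, since $\mathbb{S}_{j}$ is exactly the twisting section on $Y^{G}(\mathcal{V}^{G}_{n}(p^{n},D))$.) After these moves the top route is
\[
\Psi_{0}\circ[u\tau^{n}]_{*,0}\circ\iota_{*}\circ T^{H}_{j}\circ\left(-\cup f^{a,j}\right)
\]
applied to the pulled-back Iwasawa class.

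Next, identify $T^{H}_{j}\circ(\cup f^{a,j})$ on $\tilde{Y}^{H}$ with the $H$-side operations. The element $f^{a,j}$ is the image of the highest weight vector $v_{j}$ under $\mathrm{br}^{[a,j]}$, and it is $Q^{0}_{H}$-invariant in $V_{\lambda}\otimes\nu_{1}^{-j}$. Cupping with the section $\varphi_{f^{a,j}_{t}}$ therefore factors as cupping with $\varphi_{v_{j}}$ into $V^{H}_{(j,0;-j)}\otimes\nu_{1}^{-j}$, followed by $\mathrm{br}^{[a,j]}$. Twisting with $\mathbb{S}_{j}$ then untwists $\nu_{1}^{-j}$. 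The remaining point is to match "cup with the $Q^{0}_{H}$-invariant highest weight vector of $V^{H}_{(j,0;-j)}\otimes\nu_{1}^{-j}$, then $T_{j}$" with "apply $\mathrm{mom}^{j}_{t}$ into $\mathscr{V}_{(0,-j;0)}$, then $T_{j}$, then include into $V^{H}_{(j,0;-j)}$". Both sides are built from the same vector at level $Q^{0}_{H,t,n}$. The moment map is defined in \cite[\S 9.1]{LZGsp} as cup product with the reduction mod $p^{t}$ of the invariant vector in $\mathrm{Sym}^{j}$ fixed by the mirabolic, followed by an inverse limit. Noting that $V^{H}_{(0,-j;0)}\otimes\nu_{1}^{j}\cong V^{H}_{(j,0;-j)}$, with the relevant vector matched up to a unit, we can absorb that unit into the normalisation of $\mathrm{br}^{[a,j]}$. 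Hence the two constructions agree.

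Finally, independence of $t$ follows: the top route does not involve $t$, and pushforward along $\mathrm{pr}_{n}$ absorbs the change of $t$ via the norm compatibility of the Iwasawa class. The main obstacle is this middle step. It requires checking carefully that the lattice normalisations match, with $f^{a,j}$ normalised by \eqref{eq:normalisation}, $V^{H}$ taken minimal, and the moment vector taken from \cite{LZGsp}. In particular we need that the reduction of $f^{a,j}$ mod $p^{t}$ is the image of the moment vector, rather than a $p$-adic multiple of it. I would first verify this on the open cell using the Borel--Weil model, evaluating at $u^{-1}$ as in Lemma \ref{lem:normalise}.
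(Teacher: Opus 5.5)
Your proposal is correct and follows essentially the paper's route. The paper applies $T_{-j}\otimes 1$ to the bottom composite rather than $T_{j}\otimes 1$ to the top, and commutes it through $\Psi$, $[u\tau^{n}]_{*}$ and $\iota_{*}$ using Lemma \ref{lem:twistcoh} and the projection formula. It then uses the triangle $1\mapsto v_{j}\mapsto f^{a,j}$ to identify $\cup f^{a,j}$ with $(\mathrm{br}^{[a,j]}\otimes\nu_{1}^{-j})\circ\mathrm{mom}$, and finishes by comparing Iwasawa and finite-level pushforwards mod $p^{t}$ and letting $t$ grow.

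Two small corrections. First, the $\Psi$-compatibility you need is $(T_{j}\otimes 1)\circ\Psi_{j}=\Psi_{0}\circ T_{j}$. This comes from the projection formula, because $\mathbb{S}_{j}$ at level $\mathcal{V}^{G}_{n}(p^{n},D)$ is pulled back from level $\mathcal{V}^{G}_{n}$. It does not come from Lemma \ref{lem:twistcoh}(2): that statement concerns $\varphi_{j,n}$, and read literally it would introduce a spurious $\mathrm{tw}_{-j}$. Second, your lattice worry is moot. The paper defines $\mathrm{mom}^{H}_{j,t}$ and $\mathrm{br}^{[a,j]}$ using the same $v_{j}$, so $c\circ b=a$ holds by definition and no Borel--Weil check is needed.
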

\begin{proof}
We first note that we have a diagram 
\begin{equation} \label{fig:commrep}
\begin{tikzcd}
    \Zp \arrow[d, "b"] \arrow[r, "a"] & V_{\lambda, \Zp} \otimes \nu_{1}^{-j} \arrow[d, equal] \\
    V^{H}_{(0, -j; 0), \Zp} \arrow[r, "c"] & V_{\lambda, \Zp}\otimes \nu_{1}^{-j}
\end{tikzcd}
\end{equation}
where the $a$ and $b$ are $Q_{H}^{0}(\Zp)$-equivariant, and $c$ is $H(\Zp)$-equivariant. Let $v_{j} \in  V^{H}_{(0, -j; 0), \Zp}$ be a choice of highest weight vector, then we normalise $a$ to send $1 \mapsto f^{a, j}$, $b$ to send $1 \mapsto v_{j}$, and $c$ to send $v_{j} \mapsto f^{a, j}$. 

Let $z \in  H^{1}_{\Iw}\left(\tilde{Y}^{H}(Q_{H, \infty, n}^{0}(p^{n}, D)), \Zp\right)$ and write 
\[
\mathrm{pr}_{t, n}: H^{1}_{\Iw}\left(\tilde{Y}^{H}(Q_{H, \infty, n}^{0}(p^{n}, D)), \mathscr{V}^{H}_{(0, -j; 0)}(\Zp)\right) \to H^{1}_{\Iw}\left(\tilde{Y}^{H}(Q_{H, t, n}^{0}(p^{n}, D)), \mathscr{V}^{H}_{(0, -j; 0)}(\Zp)\right)
\]
be the projection map, so that $\mathrm{mom}_{j, t, n}^{H} = \mathrm{pr}_{t, n} \circ \left( - \cup v_{j}\right)$. Since $T_{j} \otimes 1$ is an isomorphism with inverse $T_{-j} \otimes 1$, it suffices to show that $\iota_{a, j ,n} = \left(T_{-j} \otimes 1\right) \circ \iota^{L}_{a, j, n, t} \circ \mathrm{mom}^{H}_{j, t}$. Write $\mathcal{P}_{n, j} := [u\tau^{n}]_{*, j} \circ \iota_{*}$.  Writing $w = \mathrm{mom}^{H}_{j, t, n}(z)$, we then have 
\begin{align*}
\left(T_{-j} \otimes 1\right) \circ \iota_{a, j, n, t}^{L}(w) &= \left(T_{-j} \otimes 1\right) \circ\left(\Psi_{0} \circ \mathcal{P}_{n, 0} \circ \mathrm{br}^{[a,j]} \left(w \cup \mathbb{S}_{j}\right)\right)\\
&=  \Psi_{j} \circ T_{-j} \circ \mathcal{P}_{n, 0} \circ \mathrm{br}^{[a,j]} \left(w\cup \mathbb{S}_{j}\right) \\
&= \Psi_{j} \circ \mathcal{P}_{n, j} \left( \mathrm{br}^{[a, j]} \left(w \cup \mathbb{S}_{j}\right) \cup \mathbb{S}_{-j} \right) \\
&= \Psi_{j} \circ \mathcal{P}_{n, j}\left( \left(\mathrm{br}^{[a, j]} \otimes \nu_{1}^{-j}\right) \left(w\right) \right),
\end{align*}
where $\mathrm{br}^{[a,j]} \otimes \nu_{1}^{-j}$ is the morphism of sheaves induced by the map of $H$-representations  $V_{(0, -j; 0)}^{H} \to V_{\lambda} \otimes \nu_{1}^{-j}$, the second equality uses that cupping with $\mathbb{S}_{j}$ is equivariant for the normalised action of $\tau^{n}$, and the third equality uses Lemma \ref{lem:twistcoh}, the projection formula and the fact that $\iota^{*}\mathbb{S}_{-j} = \mathbb{S}_{-j}$. In other words, $\left(T_{-j} \otimes 1\right) \circ \iota_{a, j, n, t}^{L}$ is given by the composition:
\begin{align*}
 H^{1}(Y^{\mathrm{GL}_{2}}(\mathcal{U}^{\mathrm{GL}_{2}}_{t}), \mathscr{V}_{(0, -j)}(\Zp))&\to H^{1}\left(\tilde{Y}^{H}(Q_{H, t, n}^{0}(p^{n}, D)), \mathscr{V}_{(0, -j; 0)}(\Zp)\right) \\ 
&\xrightarrow{\mathrm{br}^{[a,j]} \otimes \nu_{1}^{-j}}
 H^{1}(\tilde{Y}^{H}(Q_{H, t, n}^{0}(p^{n}, D)), \iota^{*}\left(\mathscr{V}_{\lambda}\otimes \nu_{1}^{-j}\right)(\Zp))
 \\
    & \xrightarrow{\iota_{*}} H^{3}_{\Iw}\left(Y^{G}(u\mathcal{U}^{G}_{n}(p^{n}, D)u^{-1}), \left(\mathscr{V}_{\lambda}\otimes \nu_{1}^{-j}\right)(\Zp)\right) \\
    & \xrightarrow{[u\tau^{n}]_{*, j}} H^{3}\left(Y^{G}(\mathcal{V}^{G}_{n}(p^{n}, D)), \left(\mathscr{V}_{\lambda}\otimes \nu_{1}^{-j}\right)(\Zp)\right) \\
    &\xrightarrow{\Psi_{j}} H^{3}\left(Y^{G}(\mathcal{V}^{G}_{n}), \left(\mathscr{V}_{\lambda}\otimes \nu_{1}^{-j}\right)(\Zp)\right) \otimes \mathrm{Maps}\left(\Delta_{1, n} \times \Delta_{2}, \Zp\right)
\end{align*}
Since both compositions defining $\iota_{a, j, n}$, and $ \left(T_{-j} \otimes 1\right) \circ \iota^{L}_{a, j, n, t}$ terminate in the same map, it suffices to show that 
\begin{equation} \label{eq:composition}
    [u\tau^{n}]_{*, j} \circ \iota_{*} \circ \mathrm{br}^{[a, j]} \otimes \nu_{1}^{-j} \circ \mathrm{pr}_{t, n}\left(z \cup v_{j}\right)= \mathrm{pr}_{n} \circ [u\tau^{n}]_{*, j} \circ \iota_{*}\left(z \cup \iota^{*}f^{a, j} \right),
\end{equation}
where we recall that $\varphi^{hw}_{(0, -j; 0)}$ is the section of $\mathscr{V}^{H}_{(0, -j; 0)}$ corresponding to $v_{j} \in V_{(0, -j; 0)}^{H}$. The expression $z \cup \iota^{*}f^{a, j}$ is the inverse limit over $t$ of the mod $p^{t}$ expressions 
\[
\mathrm{pr}_{t, n}(z) \cup \left(\iota^{*}\varphi_{t}^{a, j}\right) \in H^{1}(\tilde{Y}^{H}(Q_{H, t, n}^{0}(p^{n}, D)), \iota^{*}\left(\mathscr{V}_{\lambda}\otimes \nu_{1}^{-j}\right)(\Zp)/p^{t}).
\] 
On the other hand, the composition $\mathrm{br}^{[a, j]} \otimes \nu_{1}^{-j} \circ \mathrm{pr}_{t, n}\left(z \cup v_{j}\right)$ reduces modulo $p^{t}$ to the expression $\mathrm{br}^{[a, j]} \otimes \nu_{1}^{-j} \circ \left(\mathrm{pr}_{t, n}\left(z\right) \cup \left(\varphi_{v_{j}} \bmod p^{t}\right)\right)$, so we deduce from \eqref{fig:commrep} that 
\[
    \mathrm{br}^{[a, j]} \otimes \nu_{1}^{-j}\circ \left(\mathrm{pr}_{t, n}\left(z \cup v_{j}\right)\right)  \equiv \mathrm{pr}_{t, n}\left(z\cup \iota^{*}f^{a, j} \right) \bmod p^{t}.
\]
The map $\iota_{*}: H^{1}(\tilde{Y}^{H}(Q_{H, t, n}^{0}(p^{n}, D)), \iota^{*}\left(\mathscr{V}_{\lambda} \otimes \nu_{1}^{-j}\right)(\Zp))
\to H^{3}_{\Iw}\left(Y^{G}(u\mathcal{U}^{G}_{n}(p^{n}, D)u^{-1}), \left(\mathscr{V}_{\lambda} \otimes \nu_{1}^{-j}\right)(\Zp)\right)$ factors as 
\[
\begin{aligned}
    &H^{1}(\tilde{Y}^{H}(Q_{H, t, n}^{0}(p^{n}, D)), \iota^{*}\left(\mathscr{V}_{\lambda} \otimes \nu_{1}^{-j}\right)(\Zp)) \\
&\quad\to H^{3}_{\Iw}\left(Y^{G}(Q_{H, t, n}^{0}(p^{n}, D)), \left(\mathscr{V}_{\lambda} \otimes \nu_{1}^{-j}\right)(\Zp)\right) \\
&\quad\to H^{3}\left(Y^{G}(u\mathcal{U}^{G}_{n}(p^{n}, D)u^{-1}), \left(\mathscr{V}_{\lambda} \otimes \nu_{1}^{-j}\right)(\Zp)\right).
\end{aligned}
\]
and the following diagram commutes
\[
\begin{tikzcd}
    H^{3}_{\Iw}\left(Y^{G}(Q_{H, \infty, n}^{0}(p^{n}, D)), \left(\mathscr{V}_{\lambda} \otimes \nu_{1}^{-j}\right)(\Zp)\right) \arrow[r, "{[u\tau^{n}]}_{\star}"] \arrow[d, ""] & H^{3}_{\Iw}\left(Y^{G}((u\tau^{n})^{-1}Q_{H, \infty, n}^{0}(p^{n}, D)(u\tau^{n})), \left(\mathscr{V}_{\lambda} \otimes \nu_{1}^{-j}\right)(\Zp)\right) \arrow[d,  ""] \\
    H^{3}\left(Y^{G}(u\mathcal{U}^{G}_{n}(p^{n}, D)u^{-1}), \left(\mathscr{V}_{\lambda} \otimes \nu_{1}^{-j}\right)(\Zp)\right) \arrow[r, "{[u\tau^{n}]}_{\star}"] & H^{3}\left(Y^{G}(\mathcal{V}^{G}_{n}(p^{n}, D)), \left(\mathscr{V}_{\lambda} \otimes \nu_{1}^{-j}\right)(\Zp)\right)
\end{tikzcd}
\]
from which we deduce that
\[
[u\tau^{n}]_{\star} \circ \iota_{*} \circ \mathrm{br}^{[a, j]} \otimes \nu_{1}^{-j} \circ \mathrm{pr}_{t, n}\left(z \cup v_{j}\right) \equiv \mathrm{pr}_{n} \circ [u\tau^{n}]_{\star} \circ \left(\iota_{*}\left(z \cup \iota^{*}f^{a, j} \right)\right) \bmod p^{t},
\]
i.e. as elements of $H^{3}\left(Y^{G}(\mathcal{V}^{G}_{n}(p^{n}, D)), \left(\mathscr{V}_{\lambda} \otimes \nu_{1}^{-j}\right)(\Zp)/p^{t}\right)$. Taking the limit over $t$, we obtain the result.
\end{proof}
 
\begin{definition} \label{def:zclass}
  For $0 \leq j \leq a$ and $n \geq 1$, set
\[
    {_{c}}z^{[a, j]}_{n}\left(\Phi^{(p)}\right) := \iota_{a, j, n}\left({}_{c}\mathcal{EI}_{\Phi^{(p)}}\right) \in H^{3}\left(Y^{\GL_{3}}(\mathcal{V}^{\GL_{3}}_{n}), \left(\mathscr{V}_{\lambda} \otimes \nu_{1}^{-j}\right)(\Zp)\right) \otimes \mathrm{Maps}\left(\Delta_{1, n} \times \Delta_{2}, \Zp\right).
\]
 Further define `unsmoothed' versions
\[
z^{[a, j]}_{n}(\Phi^{(p)}) := \left(T_{-j} \otimes 1\right)\iota^{L}_{a, j, n, t}\left(\mathrm{Eis}_{\Phi_{f, t}}^{j}\right)
\]
using the unsmoothed Eisenstein classes $\mathrm{Eis}_{\Phi_{f, t}}^{j}$ recalled in Section \ref{sec:bettieis}.
\end{definition}

Note that by Lemma \ref{lem:mapcomparison} we have ${_{c}}z^{[a, j]}_{n}\left(\Phi^{(p)}\right) \cup \mathbb{S}_{j} = \iota^{L}_{a, j, n, t}\left({_{c}}\mathrm{Eis}^{j}_{\Phi_{f, t}}\right)$ 

\subsection{The norm relation}
We show that the classes ${}_{c}z^{[a, j]}_{n}(\Phi^{(p)}) \cup \mathbb{S}_{j}$ satisfy a norm relation as we vary $n$. Let 
\[
    \Norm^{\Delta_{1, n + 1}}_{\Delta_{1, n}}: \mathrm{Maps}\left(\Delta_{1, n + 1} \times \Delta_{2}, \Zp\right) \to \mathrm{Maps}\left(\Delta_{1, n} \times \Delta_{2}, \Zp\right)
\]
be the map satisfying $\mathrm{Norm}^{\Delta_{1, n + 1}}_{\Delta_{1, n}}(f)((\delta_{1, n} , \delta_{2})) = \sum_{(\varepsilon_{1, n+1}, \delta_{2})} f((\varepsilon_{1, n + 1}, \delta_{2}))$, where the sum is over all $(\varepsilon_{1, n+1}, \delta_{2}) \in \Delta_{1, n + 1} \times \Delta_{2}$ that map to $(\delta_{1, n}, \delta_{2})$ in $\Delta_{1,n}\times \Delta_{2}$.  
\begin{proposition} \label{prop:normrelation}
For all $n \geq 1$ we have
\[
    \left[\left([1]^{\mathcal{V}_{n+1}}_{\mathcal{V}_{n}}\right)_{*} \otimes \Norm^{\Delta_{1, n + 1}}_{\Delta_{1, n}}\right]\left({}_{c}z^{[a, j]}_{n + 1}(\Phi^{(p)}) \cup \mathbb{S}_{j}\right) = \left(U'_{p} \otimes 1\right)\cdot {}_{c}z^{[a, j]}_{n}(\Phi^{(p)})\cup \mathbb{S}_{j}
\]
in $H^{3}(Y^{\GL_{3}}(\mathcal{V}^{\GL_{3}}_{n}), \mathscr{V}_{\lambda}(\Zp)) \otimes \mathrm{Maps}\left(\Delta_{1, n} \times \Delta_{2}, \Zp\right)$, and similarly for $z^{[a, j]}_{n}$.
\end{proposition}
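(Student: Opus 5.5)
We will argue as in the proof of the norm relation of \cite[Proposition 6.6]{loefflerWilliams2021p}, using the abstract norm-relation machinery for spherical pairs of \cite{loefflerspherical}. By Lemma \ref{lem:mapcomparison}, ${}_{c}z^{[a,j]}_{n}(\Phi^{(p)}) \cup \mathbb{S}_{j} = \iota^{L}_{a,j,n,t}({}_{c}\Eis^{j}_{\Phi_{f,t}})$ for any $t \geq \max\{n+1, r\}$. We can therefore fix one such $t$ and work entirely with the maps $\iota^{L}_{a,j,n,t}$ and $\iota^{L}_{a,j,n+1,t}$, applied to the same Eisenstein class. These maps involve only the coefficient system $\mathscr{V}_{\lambda}$ and no $\nu_{1}^{-j}$-twist, which removes all Iwasawa-theoretic bookkeeping. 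The unsmoothed statement then follows by the identical argument, or by inverting $c$ via \eqref{eq:csmooth}.

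\textbf{Step 1: remove the group-ring factor.} By Lemma \ref{lem:components}, the map $\Psi_{0}$ together with $\Norm^{\Delta_{1,n+1}}_{\Delta_{1,n}}$ corresponds on the $G$-side to the pushforward $([1]^{\mathcal{V}^{G}_{n+1}(p^{n+1},D)}_{\mathcal{V}^{G}_{n}(p^{n},D)})_{*}$. Hence it suffices to prove the identity
\[
([1])_{*}\,[u\tau^{n+1}]_{*,0}\,\iota_{*}\,\mathrm{br}^{[a,j]}(x_{n+1}) = U'_{p}\,[u\tau^{n}]_{*,0}\,\iota_{*}\,\mathrm{br}^{[a,j]}(x_{n})
\]
in $H^{3}(Y^{G}(\mathcal{V}^{G}_{n}(p^{n},D)),\mathscr{V}_{\lambda}(\Zp))$. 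Here $x_{m}$ denotes the pullback of $T_{j}({}_{c}\Eis^{j}_{\Phi_{f,t}})$ to $\tilde{Y}^{H}(Q^{0}_{H,t,m}(p^{m},D))$.

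\textbf{Step 2: rewrite the right-hand side.} We expand $U'_{p}$ via the correspondence defining it. By Lemma \ref{lem:twistcoh}(3), $U'_{p}$ commutes with $T_{j}$, so the normalisations $p^{a-j}$ and $p^{a}$ match. This gives a pullback to level $\mathcal{V}_{n} \cap \tau\mathcal{V}_{n}\tau^{-1}$, then $[\tau]_{*}$, then a pushforward. Using $\mathcal{U}^{\GL_{3}}_{n+1} = \mathcal{U}^{\GL_{3}}_{n} \cap \tau\,\mathcal{U}^{\GL_{3}}_{n}\tau^{-1}$, conjugated by $\tau^{-n}$, we identify the intermediate level with $\mathcal{V}_{n+1}$ translated appropriately. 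The key point is a double coset computation. The pullback of $\iota_{*}(\cdots)$ from level $u\mathcal{U}^{G}_{n}u^{-1}$ to level $u\mathcal{U}^{G}_{n+1}u^{-1}$ is a sum over $H$-double cosets. We need that $H \cap u\mathcal{U}^{G}_{n}u^{-1}$ acts transitively on $u\mathcal{U}^{G}_{n}u^{-1}/u\mathcal{U}^{G}_{n+1}u^{-1}$, so that only one double coset occurs. This is the ``$Q_H^0$ has an open orbit'' statement at level $p^{n}$, and is a consequence of Lemma \ref{lem:opencell} together with the index computation of Lemma \ref{lem:inclusion}(1): the index $p^{2n-1}(p-1)$ grows by $p^{2}$, matching $[\mathcal{U}_{n}:\mathcal{U}_{n+1}]$, which is the cardinality of $N_{1}(\Zp)/\tau N_{1}(\Zp)\tau^{-1}$.

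Given transitivity, the Mackey formula turns the pulled-back pushforward into a pushforward from $\tilde{Y}^{H}(Q^{0}_{H,t,n+1}(p^{n+1},D))$ of the pullback of $x_{n}$. That pullback is $x_{n+1}$, by compatibility of the Eisenstein classes with change of level; the class is independent of $n$ because $t$ is fixed. The pushforward along $H$-levels then composes with the $\nu_{1}$-covering, producing exactly the $\Norm$ on $\Delta_{1}$-factors.

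\textbf{Main obstacle.} The hardest step is the transitivity/Mackey computation, together with checking that the normalisation factors $p^{a}$ (from $[u\tau^{n}]_{*,0}$ versus $[u\tau^{n+1}]_{*,0}$ and $U'_{p}$) and the volume/degree factors match on the nose. To handle it, I would verify directly via \eqref{eq:uconj} that representatives $n \in N_{1}(\Zp)/\tau N_{1}\tau^{-1}$ can be absorbed into $u^{-1}Q^{0}_{H}u$ at level $p^{n}$. Concretely, $x - z$ and $-y$ range over $\Z/p$, and the degree of $\tilde{Y}^{H}$-levels changes by exactly $p^{2}$, with the $\nu_{1}$-part accounting for one factor of $p$ in $\Delta_{1}$.
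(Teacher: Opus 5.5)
Your proposal is correct and follows essentially the same route as the paper. You fix a single $t \geq \max\{n+1,r\}$ and work with $\iota^{L}_{a,j,n,t}$, then show that the square relating $\tilde{Y}^{H}(Q^{0}_{H,t,n+1}(p^{n+1},D)) \to Y^{G}(\mathcal{U}^{G}_{n+1}(p^{n},D))$ to its level-$n$ analogue is cartesian (a single double coset, from the $p^{2}$ index count of Lemma \ref{lem:inclusion}(1); your orbit--stabiliser phrasing is equivalent to the paper's check that $Q^{0}_{H,t,n}(p^{n},D)\cap u\,\mathcal{U}^{G}_{n+1}(p^{n},D)\,u^{-1} = Q^{0}_{H,t,n+1}(p^{n+1},D)$), and recognise the remaining $\tau$-correspondence as $p^{-a}U'_{p}$, which is the paper's argument, as is your use of $\tau^{n}$-translation and compatibility of pushforward with $\Psi_{0}$ to produce the norm on $\Delta_{1}$.
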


\begin{proof}
 This proof follows the general shape of  \cite[Proposition\ 4.5.2, \S 4.6, \S 5.2.3]{loefflerspherical}, and \cite[Theorem\ 6.10]{loefflerWilliams2021p}. We sketch the argument. Recalling that 
 \[
 {_{c}}z^{[a, j]}_{n}\left(\Phi^{(p)}\right) \cup \mathbb{S}_{j} = \iota^{L}_{a, j, n, t}\left({_{c}}\mathrm{Eis}^{j}_{\Phi_{f, t}}\right)
 \]
 and writing $\tilde{z}_{n} \in H^{3}\left(Y^{G}(\mathcal{U}^{G}_{n}(p^{n}, D)), \mathscr{V}_{\lambda}(\Zp)\right)$ for the classes before the $\tau^{n}$-translation in the composition defining $\iota_{a, j, n, t}^{L}$, one has
\begin{equation} \label{eq:normcompatproof}
    \left([1]^{\mathcal{U}^{G}_{n + 1}(p^{n + 1}, D)}_{\mathcal{U}^{G}_{n + 1}(p^{n}, D)}\right)_{*}(\tilde{z}_{n + 1}) = \left([1]^{\mathcal{U}^{G}_{n + 1}(p^{n}, D)}_{\mathcal{U}^{G}_{n}(p^{n}, D)}\right)^{*}(\tilde{z}_{n}).
\end{equation}
Indeed, for $t \geq \mathrm{max}\{n + 1, r\}$ we have a diagram 
\begin{equation}
\begin{tikzcd} \label{fig:cart}
\tilde{Y}^{H}\left(Q_{H, t, n + 1}^{0}(p^{n + 1}, D)\right) \arrow[r, "{[u]\circ\iota}"] \arrow[d] & Y^{G}(\mathcal{U}^{G}_{n + 1}(p^{n}, D)) \arrow[d] \arrow [r, "{[}\tau{]}"] & Y^{G}(\tau^{-1}\mathcal{U}^{G}_{n + 1}(p^{n}, D)\tau) \arrow[d] \\
    \tilde{Y}^{H}\left(Q_{H, t, n}^{0}(p^{n}, D)\right) \arrow[r, "{[u]\circ\iota}"] & Y^{G}(\mathcal{U}^{G}_{n}(p^{n}, D)) & Y^{G}(\mathcal{U}^{G}_{n}(p^{n}, D)).
\end{tikzcd}
\end{equation}
The bottom left square is cartesian: to show this it suffices to show that the map of cosets 
\[
    Q^{0}_{H, t, n}(p^{n}, D)/Q^{0}_{H, t, n + 1}(p^{n + 1}, D) \to \mathcal{U}_{n}^{G}(p^{n}, D)/\mathcal{U}_{n + 1}^{G}(p^{n}, D)
\]
induced by $u \circ \iota$ is a bijection. Since both vertical maps have degree $p^{2}$ by Lemma \ref{lem:inclusion}(1) it suffices to show injectivity which is equivalent to showing that $Q^{0}_{H, t, n}(p^{n}, D) \cap u\mathcal{U}_{n + 1}^{G}(p^{n}, D)u^{-1} = Q^{0}_{H, t, n + 1}(p^{n + 1}, D)$ which is easily seen. The correspondence defined by the right-hand horseshoe in \eqref{fig:cart} is the unnormalised Hecke operator $p^{-a}U_{p}'$. It follows that 
\[
    [\tau]^{\mathcal{U}^{G}_{n + 1}(p^{n + 1}, D)}_{\mathcal{U}^{G}_{n}(p^{n}, D), *}\left(\tilde{z}_{n + 1}\right) = p^{-a}U_{p}' \cdot \tilde{z}_{n},
\]
where $[\tau]^{\mathcal{U}^{G}_{n + 1}(p^{n + 1}, D)}_{\mathcal{U}^{G}_{n}(p^{n}, D)}: Y^{G}(\mathcal{U}_{n + 1}^{G}(p^{n + 1}, D)) \to Y^{G}(\mathcal{U}_{n}^{G}(p^{n}, D))$ is the map induced by pushforward along the top and right arrows in the horseshoe. Applying $[\tau^{n}]_{*}$ we obtain 
\[
    [1]^{\mathcal{V}^{G}_{n + 1}(p^{n + 1}, D)}_{\mathcal{V}^{G}_{n}(p^{n}, D), *}\left([\tau^{n + 1}]_{*, 0}\tilde{z}_{n + 1}\right) = U_{p}' \cdot [\tau^{n}]_{*, 0}\tilde{z}_{n}.
\]
These maps are all compatible with $\Psi_{0}$ and so we get the result. 
\end{proof}

\subsection{Pairing with a finite-slope eigenclass} \label{sec:eigenpairing}

Let $\varphi_{f} = \otimes\varphi_{\ell} \in \Pi_{f}$ be such that:
\begin{enumerate}
    \item[(a)] $\varphi_{p}$ is the $P_{1}$-refined newvector: $\varphi_{p}$ is fixed by $\mathcal{U}^{(P_{1})}_{1, p}(p^{r})$ and $U_{p, 1}\varphi_{p} = \alpha\,\varphi_{p}$;
    \item[(b)] $W_{\varphi_{f}} \in \mathcal{W}(\Pi_{f}, E)$ is algebraic, fixed by $\mathcal{U}^{\GL_3, (p)}$ away from $p$.
\end{enumerate}
Enlarging $L$ if necessary, and normalising $\Theta_{\Pi}$ as in Section \ref{sec:autoclasses}, we obtain a class
\[
    \phi = \phi_{\varphi_{f}} := \phi_{\Pi}(W_{\varphi_{f}})/\Theta_{\Pi} \in H^{2}_{c}\left(Y^{\GL_{3}}(\mathcal{U}), \mathscr{V}^{\vee}_{\lambda}(\mathcal{O}_{L})\right)/\{\text{torsion}\},
\]
and we write $\phi_{n}$ for its pullback to level $\mathcal{V}^{\GL_{3}}_{n} \subset \mathcal{U}$.

\begin{definition} \label{def:Xi}
For $n \geq 1$ and $0 \leq j \leq a$, let $C = \vert \vol\left(\mathcal{U}^{G, (p)}(1, D) \cap H(\A_{f}^{(p)})\right) \vert_{p}$ and define
\[
    {}_{c}\tilde{\Xi}^{[a; j]}_{n}\left(\phi, \Phi^{(p)}\right) := \vol\left(\mathcal{U}^{G, (p)}(1, D) \cap H(\A_{f}^{(p)})\right)\cdot\left(1 \otimes e_{\eta_{2}}\right)\left\langle {}_{c}z^{[a, j]}_{n}(\Phi^{(p)}) \cup \mathbb{S}_{j},\ \phi_{n}\right\rangle_{\mathcal{V}_{n}} \in C^{-1} \cdot \mathrm{Maps}(\Delta_{1, n}, \mathcal{O}),
\]
where $\langle -, -\rangle_{\mathcal{V}_{n}}$ is the Poincar\'e pairing \eqref{eq:poincare} applied at level $\mathcal{V}_{n}$, $e_{\eta_{2}}:  \mathrm{Maps}(\Delta_{2}, \mathcal{O}_{L}) \to \mathcal{O}_{L}$ is the map $f \mapsto \sum_{x \in \Delta_{2}} \eta_{2}(x) f(x)$, and the volume is with respect to $dh_{f}$. We further set
\[
    {}_{c}\Xi^{[a; j]}_{n} := \alpha^{-n}\cdot{}_{c}\tilde{\Xi}^{[a; j]}_{n} \in C^{-1}p^{-hn}\,\mathcal{O}_{L}[\Delta_{1, n}],
\]
where $h := v_{p}(\alpha)$, and define $\tilde{\Xi}^{[a;j]}_{n}, \Xi^{[a; j]}_{n}$ (with $L$-coefficients) analogously using $z^{[a, j]}_{n}$.
\end{definition}
\begin{remark}\label{rem:LWcomparison}
The map $(1 \otimes e_{\eta_2}) \circ \iota^L_{a,j,n,t}$
recovers the construction of \cite[\S 6]{loefflerWilliams2021p},
with the same tame levels and branching-map normalisations.
Here we identify
\[
    \mathrm{Maps}(\Delta_{1,n},\mathcal O_L)
    \cong \mathcal O_L[\Delta_{1,n}],
    \qquad f \longmapsto \sum_{x \in \Delta_{1,n}} f(x)[x].
\]
In particular,
\[
    (1 \otimes e_{\eta_2})
    \iota^L_{a,j,n,t}\left({}_c\mathrm{Eis}^j_{\Phi_{f,t}}\right)
    = {}_c\xi^{[a,j]}_{n,\mathrm{LW}},
\]
where the right-hand side denotes the class constructed in
\textit{op.\ cit.} Indeed, Lemma \ref{lem:inclusion} identifies the source level
with that used in their construction. Their coefficient twist,
given in the rational local-system model by multiplication by
$\norm{\nu_1(h)}^{-j}$, becomes multiplication by
\[
    \norm{\nu_1(h)}^{-j}\nu_1(h_p)^{-j}
    = \mathbb S_j(h)
\]
under the comparison isomorphism \eqref{eq:compisom}.
It therefore agrees with $T_j$. Moreover, on the union of connected components labelled by
$(x,y) \in \Delta_{1,n} \times \Delta_2$, applying $\Psi_0$
and then $1 \otimes e_{\eta_2}$ records the contribution with
weight $\eta_2(y)[x]$. This is precisely the effect of cupping
with $[\nu_{1,(n)}]$ and $\widehat{\eta}_2 \circ \nu_2$ in
\textit{op.\ cit.} The translation by $u\tau^n$ preserves
these component labels. The remaining operations are the same
branching map, pushforward and translation, with total
normalisation $p^{an}$, incorporated here in
$[u\tau^n]_{*,0}$. This proves the claimed identification.

The same comparison holds for the unsmoothed classes.
Consequently, pairing with the same automorphic cohomology
class and applying the same volume and eigenvalue
normalisations gives the finite-level quantities computed
in \cite[\S 7]{loefflerWilliams2021p}.
\end{remark}
As in \cite[Proposition\ 6.11]{loefflerWilliams2021p}, the volume normalisation makes ${}_{c}\tilde{\Xi}^{[a;j]}_{n}$ independent of the choice of $\mathcal{U}^{G, (p)}$ fixing the data.

\begin{corollary} \label{cor:normcompatXi}
For all $n \geq 1$ we have $\Norm^{\Delta_{1, n + 1}}_{\Delta_{1, n}}\left({}_{c}\Xi^{[a; j]}_{n + 1}\right) = {}_{c}\Xi^{[a; j]}_{n}$.
\end{corollary}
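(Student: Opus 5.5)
The corollary should follow from Proposition \ref{prop:normrelation} by pairing with $\phi$ and then using adjointness and the eigenvector property. I would proceed in four steps.

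First, apply the Poincaré pairing at level $\mathcal{V}_{n}$ to both sides of the identity in Proposition \ref{prop:normrelation}. On the left-hand side, the pushforward $([1]^{\mathcal{V}_{n+1}}_{\mathcal{V}_{n}})_{*}$ is adjoint to the pullback $([1]^{\mathcal{V}_{n+1}}_{\mathcal{V}_{n}})^{*}$. Since $\phi_{n+1}$ is by definition the pullback of $\phi_{n}$ (both being pullbacks of $\phi$ from level $\mathcal{U}$), this gives
\[
\bigl\langle ([1]_{*}\otimes \Norm)\bigl({}_{c}z^{[a,j]}_{n+1}\cup\mathbb{S}_{j}\bigr),\phi_{n}\bigr\rangle_{\mathcal{V}_{n}} = \Norm^{\Delta_{1,n+1}}_{\Delta_{1,n}}\bigl\langle {}_{c}z^{[a,j]}_{n+1}\cup\mathbb{S}_{j},\phi_{n+1}\bigr\rangle_{\mathcal{V}_{n+1}}.
\]
Here the norm on the $\mathrm{Maps}(\Delta_{1,\bullet}\times\Delta_{2},\Zp)$ factor passes through the pairing, since the pairing is linear in that factor.

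Second, applying $1\otimes e_{\eta_{2}}$ commutes with $\Norm^{\Delta_{1,n+1}}_{\Delta_{1,n}}$, because the norm acts only on the $\Delta_{1}$-variable. The volume factor in Definition \ref{def:Xi} is the same at levels $n$ and $n+1$, so the left-hand side becomes $\Norm({}_{c}\tilde{\Xi}^{[a;j]}_{n+1})$.

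Third, on the right-hand side, $U'_{p}$ is adjoint to $U_{p}$ under $\langle -,-\rangle_{\mathcal{V}_{n}}$, as noted after \eqref{eq:poincare}. So the right-hand side equals $\langle {}_{c}z^{[a,j]}_{n}\cup\mathbb{S}_{j}, U_{p}\phi_{n}\rangle$, and we need $U_{p}\phi_{n}=\alpha\phi_{n}$. This is the step I expect to need the most care. The class $\phi$ is $U_{p,1}$-eigen with eigenvalue $\alpha$ at level $\mathcal{U}$ by Proposition \ref{prop:newvector} and the Hecke-equivariance of $\phi_{\Pi}$. The issue is that $\phi_{n}$ lives at the deeper level $\mathcal{V}_{n}$, so I must check that the $U_{p}$ operator at level $\mathcal{V}_{n}$ commutes with pullback from $\mathcal{U}$. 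Both groups admit Iwahori decompositions with respect to $P_{1}$. They differ only in the $\bar{N}_{1}$ and $L_{1}$-entries, and these are contracted by $\tau^{-1}$, so the double coset $K\tau^{-1}K$ has compatible coset representatives, namely translates by $N_{1}(\Zp)/\tau^{-1}N_{1}(\Zp)\tau$-type elements. I would verify this by exhibiting the same set of $p^{2}$ representatives for both levels. That shows the pullback of $U_{p}\phi$ is $U_{p}\phi_{n}$, which equals $\alpha\phi_{n}$. (The normalisation $p^{a-j}$ on $\mathscr{V}_{\lambda}\otimes\nu_{1}^{-j}$ after $T_{j}$ is just the $p^{a}$ normalisation on $\mathscr{V}_{\lambda}$, matching $U_{p,1}$.)

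Finally, combining the three steps gives $\Norm({}_{c}\tilde{\Xi}^{[a;j]}_{n+1}) = \alpha\,{}_{c}\tilde{\Xi}^{[a;j]}_{n}$. Multiplying by $\alpha^{-(n+1)}$ then yields $\Norm({}_{c}\Xi^{[a;j]}_{n+1}) = {}_{c}\Xi^{[a;j]}_{n}$. The unsmoothed version follows identically from the corresponding case of Proposition \ref{prop:normrelation}.
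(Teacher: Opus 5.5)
Your proposal is correct and is essentially the paper's proof. Pair the norm relation of Proposition \ref{prop:normrelation} with $\phi_n$, move the pushforward across as the pullback $\phi_{n+1}=([1]^{\mathcal{V}_{n+1}}_{\mathcal{V}_{n}})^{*}\phi_{n}$, use the adjointness of $U'_p$ and $U_p$ together with the eigenvalue $\alpha$, and rescale by $\alpha^{-(n+1)}$. Your extra check that $U_p\phi_n=\alpha\phi_n$ at the deeper level $\mathcal{V}_n$, which the paper simply takes from (a), is sound: $\mathcal{U}$ and $\mathcal{V}_n$ differ only in their $\bar N_1$-entries, and the cosets of $U_{p,1}=p^{a}[K\tau K]$ are $x\tau K$ with $x$ running over $N_1(\Zp)/\tau N_1(\Zp)\tau^{-1}$ for both levels (your quotient $N_1(\Zp)/\tau^{-1}N_1(\Zp)\tau$ is written the wrong way round).
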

\begin{proof}
Write $z_{n} := {}_{c}z^{[a, j]}_{n}(\Phi^{(p)}) \cup \mathbb{S}_{j}$. Since $\phi_{n + 1} = ([1]^{\mathcal{V}_{n+1}}_{\mathcal{V}_{n}})^{*}\phi_{n}$, adjointness of pullback and pushforward under \eqref{eq:poincare} gives
\[
    \Norm\left\langle {}_{c}z_{n+1}, \phi_{n + 1}\right\rangle = \left\langle\left[([1])_{*}\otimes\Norm\right]{}_{c}z_{n + 1}, \phi_{n}\right\rangle = \left\langle (U'_{p}\otimes 1){}_{c}z_{n}, \phi_{n}\right\rangle = \left\langle {}_{c}z_{n}, U_{p}\phi_{n}\right\rangle = \alpha\left\langle {}_{c}z_{n}, \phi_{n}\right\rangle,
\]
using Proposition \ref{prop:normrelation}, the adjointness of $U_{p}$ and $U'_{p}$, and (a). Multiplying by $\alpha^{-(n + 1)}$ gives the claim.
\end{proof}

\subsection{Congruences between twists} \label{sec:classcong}

We now prove that the constructed classes ${}_{c}\Xi^{[a, j]}_{n}$ satisfy congruences as $j$ varies. Recall that we identify 
\[
\Delta_{1,n} := \Q^{\times}_{> 0}\backslash \mathbb{A}^{\times}_{f}/ \nu_{1}\left(\mathcal{V}_{n}(p^{n})\right)  \cong \left(\Z/p^{n}\Z\right)^{\times}
\]
using the conventions of \cite{loefflerWilliams2021p} that a uniformiser $\varpi_{\ell}$ at $\ell \neq p$ is sent to $\ell \bmod p^{n}$. In particular, $z \in \Zp^{\times} \subset \mathbb{A}^{\times}_{f}$ is sent to $z^{-1} \bmod p^{n}$. For all $n \geq 1, 0 \leq j \leq a$ recall that in Lemma \ref{lem:twistcoh} we defined a section $\mathbb{S}_{j, n} \in H^{0}(Y_{G}(\mathcal{V}_{n}(p^{n}, D)), \nu_{1}^{j}/p^{n})$ whose image under the isomorphism to $\mathrm{Maps}(\left(\Z/p^{n}\Z\right)^{\times} \times \Delta_{2}, \mathcal{O})$ is $(z, \delta) \mapsto z^{j}$ and is the reduction of $\mathbb{S}_{j}$ modulo $p^{n}$. 
For the proof of the following Proposition we abuse notation somewhat and write 
$\mathrm{res}^{nk}_{n}$ for the pullback \[H^{3}_{\Iw}(Y^{G}(K(p^{n}, D)), V) \to H^{3}_{\Iw}(Y^{G}(K(p^{nk}, D)), V)
\]
for any closed subgroup $K$ and any appropriate local system $V$, the specifics being dictated by context, as well as for the natural pullback map 
\[
    \mathrm{Maps}\left(\left(\Z/p^{n}\Z\right)^{\times}, \Zp\right) \to \mathrm{Maps}\left(\left(\Z/p^{nk}\Z\right)^{\times}, \Zp\right),
\]
and any linear extension thereof.
\begin{proposition} \label{prop:classcong}
For all $0 \leq k \leq a$ and $n \geq 1$ the classes $\mathrm{res}_{n}^{nk}\left({}_{c}\Xi^{[a;j]}_{n}(\Phi^{(p)})\right) \in \mathrm{Maps}\left(\left(\Z/p^{nk}\Z\right)^{\times}, L\right)$ satisfy 
 \[
    \norm{p^{-kn}\sum_{j = 0}^{k}(-1)^{j}\binom{k}{j}\Tw_{-j}\left(\mathrm{res}_{n}^{nk}\left({}_{c}\Xi^{[a, j]}_{n}(\Phi^{(p)})\right)\right)} \leq Cp^{hn},
\]
where $\norm{\cdot}$ is the sup norm, where our convention is that when $k = 0$ $\mathrm{res}^{nk}_{n}$ referes to the identity map.  
\end{proposition}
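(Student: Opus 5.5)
Unwinding the normalisation ${}_{c}\Xi^{[a;j]}_{n} = \alpha^{-n}\,{}_{c}\tilde{\Xi}^{[a;j]}_{n}$ and $v_{p}(\alpha^{-n}) = -hn$, it suffices to show that
\[
    \sum_{j=0}^{k}(-1)^{j}\binom{k}{j}\Tw_{-j}\left(\mathrm{res}^{nk}_{n}\,{}_{c}\tilde{\Xi}^{[a;j]}_{n}\right) \in p^{kn}C^{-1}\,\mathrm{Maps}\left(\left(\Z/p^{nk}\Z\right)^{\times}, \mathcal{O}_{L}\right),
\]
since $\norm{p^{-kn}\alpha^{-n}x} \leq p^{hn}\norm{p^{-kn}x}$. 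The plan is to move the alternating sum inside the Poincar\'e pairing and then onto the coefficient vectors $f^{a,j}$, where Proposition \ref{prop:cong} applies. The case $k=0$ is just integrality of $\tilde{\Xi}$, so assume $k \geq 1$.

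\textbf{Step 1: pass to level $p^{nk}$ and untwist.} Apply $\mathrm{res}^{nk}_{n}$, i.e.\ pull back along $Y^{G}(\mathcal{V}_{n}(p^{nk},D)) \to Y^{G}(\mathcal{V}_{n}(p^{n},D))$. By Lemma \ref{lem:twistcoh}(2), applied modulo $p^{nk}$ at this level, cupping with $\mathbb{S}_{j}$ and then applying $\Psi_{0}$ coincides with $(T_{j,nk} \otimes \Tw_{-j})\circ\Psi_{j}$. Thus $\Tw_{-j}$ applied to $\mathrm{res}\,{}_{c}\tilde{\Xi}^{[a;j]}_{n}$ is, modulo $p^{nk}$, the pairing with $\phi_{n}$ of the class in $H^{3}(Y^{G}(\mathcal{V}_{n}(p^{nk},D)),\mathscr{V}_{\lambda}/p^{nk})$ obtained from $\mathrm{res}^{nk}_{n}\,{}_{c}z^{[a,j]}_{n}$, before applying $\Psi$, by cupping with the section $\nu_{1}^{j}$, which is well defined mod $p^{nk}$. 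Since $\Psi_{0}$, $e_{\eta_{2}}$, the pairing, and multiplication by the volume are linear and integral up to $C^{-1}$, it suffices to show that
\[
    \sum_{j}(-1)^{j}\binom{k}{j}\left(\mathrm{res}^{nk}_{n}\,{}_{c}z^{[a,j]}_{n}\right)\cup\nu_{1}^{j} \equiv 0 \bmod p^{nk}
\]
in $H^{3}(Y^{G}(\mathcal{V}_{n}(p^{nk},D)),\mathscr{V}_{\lambda}(\Zp)/p^{nk})$.

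\textbf{Step 2: move the sum into coefficients.} By definition, ${}_{c}z^{[a,j]}_{n}$ is $[u\tau^{n}]_{*,j}\,\iota_{*}\bigl({}_{c}\mathcal{EI}\cup f^{a,j}\bigr)$, and the Eisenstein--Iwasawa input ${}_{c}\mathcal{EI}$ does not depend on $j$. Using the projection formula for $\iota_{*}$, the compatibility $T_{j}\circ[u\tau^{n}]_{*,j} = [u\tau^{n}]_{*,0}\circ T_{j}$ from Lemma \ref{lem:twistcoh}(3), and the fact that all maps commute with the restriction to level $p^{nk}$ in $\nu_{1}$, the expression above becomes, modulo $p^{nk}$,
\[
    \mathrm{pr}\circ[u\tau^{n}]_{*,0}\,\iota_{*}\Bigl({}_{c}\mathcal{EI}\cup\sum_{j}(-1)^{j}\binom{k}{j}\,\iota^{*}\varphi\bigl(\nu_{1}^{j}f^{a,j}\bigr)\Bigr).
\]
It is convenient to rewrite this as the pushforward along $[u\tau^{n}]$ of the class cupped with the translated section corresponding to $\tau^{-n}\ast\bigl(u^{-1}\cdot\nu_{1}^{j}f^{a,j}\bigr)$, using the equivariance in Lemma \ref{lem:vecsecs}. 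The normalisation $p^{(a-j)n}$ matches the $\ast$-action, and after multiplying by $\nu_{1}^{j}$ the normalisation becomes $p^{an}$, independent of $j$.

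\textbf{Step 3: apply Proposition \ref{prop:cong}.} The section is then $\varphi$ of $\sum_{j}(-1)^{j}\binom{k}{j}\,\tau^{-n}\ast\bigl(u^{-1}\cdot\nu_{1}^{j}f^{a,j}\bigr)$. By Proposition \ref{prop:cong} this vector lies in $p^{nk}V_{\lambda,\Zp}$, so the whole class vanishes modulo $p^{nk}$, and the bound follows with constant $C$.

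\textbf{Main obstacle.} The main difficulty is Step 2. One must check that the sections $\nu_{1}^{j}f^{a,j}$ are invariant under the relevant level: they are $Q_{H}^{0}$-invariant only after reduction modulo $p^{nk}$, which is exactly why we pass to $\nu_{1}$-level $p^{nk}$, where $\nu_{1}\equiv 1$. One must also reconcile the twist $\nu_{1}^{j}$ on the $G$-side, via $u\tau^{n}$-conjugation, with the coefficient-side extension of $\nu_{1}^{j}$ to $\bar{P}_{1}^{G}N_{1}$ used in Proposition \ref{prop:cong}. I would verify this directly in the Borel--Weil model on the big cell, where the evaluation formula $(\tau^{-n}\ast v)(\bar{q}n) = v(\bar{q}\tau^{n}n\tau^{-n})$ makes the comparison explicit.
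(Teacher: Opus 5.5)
Your proposal is correct and follows essentially the same route as the paper: you untwist at $\nu_{1}$-level $p^{nk}$ via Lemma \ref{lem:twistcoh}(2), then use the projection formula to absorb both the $p^{(a-j)n}$ normalisations and the sections $\varphi_{j,nk}$ into the single coefficient vector $\tau^{-n}\ast(u^{-1}\cdot\nu_{1}^{j}f^{a,j})$, and you conclude with Proposition \ref{prop:cong}. The one step you only assert, that $\mathrm{res}^{nk}_{n}$ commutes with the corestriction $\mathrm{pr}_{n}$ and with $\Psi_{0}$, is proved in the paper by a cartesian-square argument: $\nu_{1}$ identifies the relevant coset spaces on both sides with $(1+p^{n}\Zp)/(1+p^{nk}\Zp)$.
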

\begin{proof} 
We first note that for $k = 0$ the statement of the proposition follows immediately from the integrality of the classes ${}_{c}\tilde{\Xi}_{n}^{[a; j]}$. Set $\mathcal{R}_{n, j} = \mathrm{pr}_{n} \circ [u\tau^{n}]_{*, j} \circ \iota_{*}$. Note first that Lemma \ref{lem:twistcoh} (2) gives us 
\begin{align*}
   \mathrm{tw}_{-j} \circ \mathrm{res}^{nk}_{n}\left\langle {}_{c}z^{[a, j]}_{n}(\Phi^{(p)}) \cup \mathbb{S}_{j},\ \phi_{n}\right\rangle_{\mathcal{V}_{n}} &= \left\langle \mathrm{res}^{nk}_{n}\left({}_{c}z^{[a, j]}_{n}(\Phi^{(p)})\right) \cup \mathbb{S}_{j} \otimes \mathrm{tw}_{-j},\ \phi_{n}\right\rangle_{\mathcal{V}_{n}} \\
    &\equiv \left\langle \Psi_{0}\left( \mathrm{res}^{nk}_{n}\circ \mathcal{R}_{n, j}\left( {}_{c}\mathcal{EI}_{\Phi^{(p)}}\cup f^{a, j}\right) \cup \varphi_{j, nk}\right),\ \phi_{n}\right\rangle_{\mathcal{V}_{n}} \ \bmod p^{nk}
\end{align*}
We now show that the classes 
\[
z_{n, j} = [u\tau^{n}]_{*, j} \circ  \iota_{*}\left( {}_{c}\mathcal{EI}_{\Phi^{(p)}}\cup \iota^{*}f^{a, j}\right) \in H^{3}\left(Y^{G}((u\tau^{n})^{-1}Q_{H}^{0}(p^{n}, D)(u\tau^{n})), \left(\mathscr{V}_{\lambda} \otimes \nu_{1}^{-j}\right)(\Zp)\right)
\]
satisfy
\begin{equation} \label{eq:classcong}
    \sum_{j = 0}^{k}(-1)^{j}\binom{k}{j}\mathrm{res}_{n}^{nk}\left(z_{n, j}\right) \cup \varphi_{j, nk} \equiv 0 \bmod p^{nk}
\end{equation}
for all $1 \leq k \leq a$.
Indeed, recalling that we defined $[u\tau^{n}]_{*, j} = p^{(a - j) n}[u\tau^{n}]_{*}$ as a map 
\[
H^{3}_{\Iw}(Y^{G}(Q_{H,\infty,n}^{0}(p^{n}, D)), \left(\mathscr{V}_{\lambda} \otimes \nu_{1}^{-j}\right)(\Zp)) \to   H^{3}\left(Y^{G}((\tau^{n}u)^{-1}Q_{H}^{0}(p^{n}, D)(\tau^{n}u)), \left(\mathscr{V}_{\lambda} \otimes \nu_{1}^{-j}\right)(\Zp)\right),
\] 
we have (all modulo $p^{nk}$), letting $z := {}_{c}\mathcal{EI}_{\Phi^{(p)}}$,
\begin{align*}
    \mathrm{res}_{n}^{nk}\left(z_{n, j}\right)\cup \varphi_{j, nk} &=  \mathrm{res}_{n}^{nk}\left( [u\tau^{n}]_{*, j} \circ  \iota_{*}\left(z \cup \iota^{*}f^{a, j}\right)\right) \cup \varphi_{j, nk} \\
    &=  \mathrm{res}_{n}^{nk} \left(  [u\tau^{n}]_{*, j} \circ  \left(\iota_{*}\left(z\right) \cup f^{a, j}\right)\right)\cup \varphi_{j, nk}\\
    &=  p^{(a - j)n}\mathrm{res}_{n}^{nk} \left(  [u\tau^{n}]_{*} \circ  \left(\iota_{*}\left(z\right) \cup f^{a, j}\right)\right)\cup \varphi_{j, nk} \\
    &= \mathrm{res}_{n}^{nk} \left(  [u\tau^{n}]_{*}  \left(\iota_{*}\left(z\right)\right) \cup p^{(a - j)n}[u\tau^{n}]_{*}f^{a, j}\right)\cup \varphi_{j, nk} \\
    &= \mathrm{res}_{n}^{nk} \left(  [u\tau^{n}]_{*}  \left(\iota_{*}\left(z\right)\right) \right) \cup p^{(a - j)n}[u\tau^{n}]_{*}f^{a, j}\cup \varphi_{j, nk} \\
    &= \mathrm{res}_{n}^{nk} \left(  [u\tau^{n}]_{*}  \left(\iota_{*}\left(z\right)\right)\right) \cup p^{an}[u\tau^{n}]_{*}\left(f^{a, j} \cup \varphi_{j, nk}\right) \\
    &= \mathrm{res}_{n}^{nk} \left(  [u\tau^{n}]_{*}  \left(\iota_{*}\left(z\right)\right)\right) \cup [u\tau^{n}]_{*, 0}\left(f^{a, j} \cup \varphi_{j, nk}\right)
\end{align*}
the section $[u\tau^{n}]_{*, 0}\left(f^{a, j} \cup\varphi_{j, nk}\right)$ is induced by the integral vector $\tau^{-n} * \left(u^{-1} \cdot \nu_{1}^{j}f^{a, j}\right) \in V_{\lambda, \Zp}$ and thus by Proposition \ref{prop:cong} the classes $z_{n, j}$ satisfy the congruence \eqref{eq:classcong}. 
We claim that the following diagram commutes
\begin{equation}  \label{fig:res}
\begin{tikzcd}
    H^{3}_{\Iw}\left(Y^{G}((u\tau^{n})^{-1}Q_{H, \infty, n}^{0}(p^{n}, D)(u\tau^{n})), \left(\mathscr{V}_{\lambda} \otimes \nu_{1}^{-j}\right)(\Zp)\right) \arrow[d, "\mathrm{res}^{nk}_{n}"] \arrow[r, "\mathrm{pr}_{n}"] &  H^{3}\left(Y^{G}(\mathcal{V}_{n}(p^{n}, D)), \left(\mathscr{V}_{\lambda} \otimes \nu_{1}^{-j}\right)(\Zp)\right) \arrow[d, "\mathrm{res}_{n}^{nk}"] \\
     H^{3}\left(Y^{G}((u\tau^{n})^{-1}Q_{H, \infty, n}^{0}(p^{nk}, D)(u\tau^{n})), \left(\mathscr{V}_{\lambda} \otimes \nu_{1}^{-j}\right)(\Zp)\right) \arrow[r, "\mathrm{pr}_{nk}"]&  H^{3}\left(Y^{G}(\mathcal{V}_{n}(p^{nk}, D)), \left(\mathscr{V}_{\lambda} \otimes \nu_{1}^{-j}\right)(\Zp)\right)
\end{tikzcd}
\end{equation}
Indeed, if $K_{t} \subset G(\Qp)$ is a family of open compact subgroups satisfying $\cap_{t}K_{t} = \left((u\tau^{n})^{-1}Q_{H, \infty, n}^{0}(u\tau^{n})\right)_{p}$ and $K_{t}(p^{n}, D) \subset \mathcal{V}_{n}(p^{n}, D)$ (adjoining the tame level $\mathcal{U}^{G, (p)}$ to $K_{t}$ and leaving it implicit) then the diagram  
\[
\begin{tikzcd}[column sep=6em]
    Y^{G}(K_{t}(p^{nk}, D)) \arrow[d, ""] \arrow[r, "{[1]}^{K_{t}(p^{nk}, D)}_{\mathcal{V}_{n}(p^{nk}, D)}"] & Y^{G}(\mathcal{V}_{n}(p^{nk}, D)) \arrow[d, ""] \\
    Y^{G}(K_{t}(p^{n}, D)) \arrow[r, "{[1]}^{K_{t}(p^{n}, D)}_{\mathcal{V}_{n}(p^{n}, D)}"] & Y^{G}(\mathcal{V}_{n}(p^{n}, D))
\end{tikzcd}
\]
is cartesian as $\nu_{1}$ identifies 
\[
    K_{t}(p^{n}, D)/K_{t}(p^{nk}, D) \cong \left(1 + p^{n}\Zp\right)/\left(1 + p^{nk}\Zp\right) \cong \mathcal{V}_{n}(p^{n}, D)/\mathcal{V}_{n}(p^{nk}, D).
\]
This gives a commutative diagram 
\[
\begin{tikzcd}
    H^{3}\left(Y^{G}(K_{t}(p^{n}, D)), \mathscr{V}_{\lambda} (\Zp)\right) \arrow[d, "\mathrm{res}^{nk}_{n}"] \arrow[r, "\mathrm{pr}_{n}"] &  H^{3}\left(Y^{G}(\mathcal{V}_{n}(p^{n}, D)), \mathscr{V}_{\lambda} (\Zp)\right) \arrow[d, "\mathrm{res}_{n}^{nk}"] \\
     H^{3}\left(Y^{G}(K_{t}(p^{nk}, D)), \mathscr{V}_{\lambda} (\Zp)\right) \arrow[r, "\mathrm{pr}_{n}^{(nk)}"]&  H^{3}\left(Y^{G}(\mathcal{V}_{n}(p^{nk}, D)), \mathscr{V}_{\lambda} (\Zp)\right)
\end{tikzcd}
\]
and we deduce commutativity of \eqref{fig:res} by taking the limit over $t$. Moreover, its easy to see that we have a commutative diagram 
\[
\begin{tikzcd}
H^{3}\left(Y^{G}(\mathcal{V}_{n}(p^{n}, D)), \mathscr{V}_{\lambda}(\Zp)\right) \arrow[d, "\mathrm{res}^{nk}_{n}"] \arrow[r, "\Psi_{0}"] & H^{3}\left(Y^{\mathrm{GL}_{3}}(\mathcal{V}^{G}_{n}),  \mathscr{V}_{\lambda}(\Zp)\right)  \otimes \mathrm{Maps}\left(\left(\Z/p^{n}\Z\right)^{\times} \times \Delta_{2}, \Zp\right) \arrow[d, "1 \otimes \mathrm{res}^{nk}_{n}"] \\
H^{3}\left(Y^{G}(\mathcal{V}_{n}(p^{nk}, D)), \mathscr{V}_{\lambda}(\Zp)\right) \arrow[r, "\Psi_{0}"] & H^{3}\left(Y^{\mathrm{GL}_{3}}(\mathcal{V}^{G}_{n}),  \mathscr{V}_{\lambda}(\Zp)\right)  \otimes \mathrm{Maps}\left(\left(\Z/p^{nk}\Z\right)^{\times} \times \Delta_{2}, \Zp\right)
\end{tikzcd}
\]
From the above discussion and Lemma \ref{lem:twistcoh} we deduce that 
\[
    \Psi_{0} \circ \mathrm{pr}_{n} \circ \left(\mathrm{res}_{n}^{nk}(z_{n, j}) \cup \varphi_{j, nk} \right) =\left(1 \otimes \Tw_{-j}\right)  \mathrm{res}_{n}^{nk}(\iota_{a, j, n}(z)) \cup \mathbb{S}_{j}, 
\]
where as before $\cup \mathbb{S}_{j}$ is acting as the identity on $\mathrm{Maps}\left(\left(\Z/p^{nk}\Z\right)^{\times}, \Z/p^{nk}\Z\right)$. Setting \[
v := \vol\left(\mathcal{U}^{G, (p)}(1, D) \cap H(\A_{f}^{(p)})\right)
\]
we conclude that 
\[
     \Tw_{-j} \circ \mathrm{res}_{n}^{nk} \left(v^{-1}{}_{c}\tilde{\Xi}^{[a, j]}_{n}\right) = \left(1 \otimes e_{\eta_{2}}\right)\langle \Psi_{0} \circ \mathrm{pr}_{n} \circ \left(\mathrm{res}_{n}^{nk}(z_{n, j}) \cup \varphi_{j, nk} \right), \phi_{n}\rangle \bmod p^{nk}
\]
from which it follows immediately from \eqref{eq:classcong} that  
\begin{align*}
   C \cdot \sum_{j = 0}^{k}(-1)^{j}\binom{k}{j}\Tw_{-j}\left(\mathrm{res}^{nk}_{n}\left({}_{c}\tilde{\Xi}^{[a; j]}_{n}\left(\phi, \Phi^{(p)}\right) \right)\right)\equiv 0 \bmod p^{nk}
\end{align*}
as elements of $\mathrm{Maps}\left(\left(\Z/p^{nk}\Z\right)^{\times}, \mathcal{O}_{L}/p^{nk}\right)$, which we can equivalently write as 
\[
    \norm{\sum_{j = 0}^{k}(-1)^{j}\binom{k}{j}\Tw_{-j}\left(\mathrm{res}^{nk}_{n}\left({}_{c}\tilde{\Xi}^{[a; j]}_{n}\left(\phi, \Phi^{(p)}\right) \right)\right)} \leq Cp^{-nk}.
\]
It is immediate that the normalised classes satisfy
\[
    \norm{\alpha^{n}\sum_{j = 0}^{k}(-1)^{j}\binom{k}{j}\Tw_{-j}\left(\mathrm{res}^{nk}_{n}\left({}_{c}\Xi^{[a; j]}_{n}\left(\phi, \Phi^{(p)}\right) \right)\right)} \leq Cp^{-nk},
\]
which rearranges to 
\[
    \norm{p^{nk}\sum_{j = 0}^{k}(-1)^{j}\binom{k}{j}\Tw_{-j}\left(\mathrm{res}^{nk}_{n}\left({}_{c}\Xi^{[a; j]}_{n}\left(\phi, \Phi^{(p)}\right) \right)\right)} \leq Cp^{hn}.
\]
\end{proof}
\subsection{Tempered distributions} \label{sec:growththm}

Set
\[
    \Theta_{c} := c^{2} - (\eta_{2}\omega_{\Pi})(c)^{-1}\,[c]^{-1} \in \mathcal{O}_{L}[[\Zp^{\times}]],
\]
As in \cite[(6.3), Proposition\ 6.13]{loefflerWilliams2021p}, the smoothed and non-smoothed classes are related by
\begin{equation} \label{eq:crelation}
    {}_{c}\Xi^{[a; j]}_{n} = \left(c^{2} - c^{-j}(\eta_{2}\omega_{\Pi})(c)^{-1}[c]^{-1}\right)\cdot\Xi^{[a; j]}_{n}.
\end{equation}

\begin{theorem} \label{thm:growth}
Suppose $\tilde{\Pi}$ has small slope, $h < a + 1$. Then:
\begin{enumerate}
    \item There is a unique ${}_{c}\mu_{\tilde{\Pi}} = {}_{c}\mu_{\tilde{\Pi}}(\phi, \Phi^{(p)}) \in \mathscr{D}_{h}(\Zp^{\times}, L)$ such that for all $n \geq 1$, all Dirichlet characters $\chi$ of conductor dividing $p^{n}$, and all $0 \leq j \leq a$,
    \[
        \int_{\Zp^{\times}}\chi(x)x^{j}{}_{c}\mu_{\tilde{\Pi}}(x) = \sum_{\gamma\in\left(\Z/p^{n}\Z\right)^{\times}}\chi(\gamma){}_{c}\Xi^{[a; j]}_{n}\left(\gamma \right).
    \]
    \item Suppose $\eta_{2}$ is chosen so that $\eta_{2}\omega_{\Pi}$ is not congruent modulo $\mathfrak{m}_{L}$ to any character of $p$-power conductor. Then for suitable $c$ the element $\Theta_{c}$ is a unit in $\Lambda \otimes_{\Zp} \mathcal{O}_{L}$, and
    \[
        \mu_{\tilde{\Pi}}\left(\phi, \Phi^{(p)}\right) := \Theta_{c}^{-1}*{}_{c}\mu_{\tilde{\Pi}}\left(\phi, \Phi^{(p)}\right) \in \mathscr{D}_{h}(\Zp^{\times}, L),
    \]
    where $*$ denotes convolution of distributions, is independent of $c$ and satisfies 
    \[
    \int\chi(x)x^{j}\mu_{\tilde{\Pi}} = \sum_{\gamma \in \left(\Z/p^{n}\Z\right)^{\times}}\chi(\gamma)\Xi^{[a; j]}_{n}(\gamma) 
    \]
    for all $\chi, j, n$ as in (1).
\end{enumerate}
\end{theorem}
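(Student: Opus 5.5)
Part (1) follows the Amice--V\'elu/Vi\v{s}ik interpolation scheme, packaged in Proposition \ref{prop:interp}(3). First define an algebraic distribution $\mu_{\mathrm{alg}} \in \mathscr{D}^{[0,a]}_{\mathrm{alg}}(\Zp^{\times}, L)$ on $LP^{[0,a]}$. For a locally polynomial function of the form $\mathbf{1}_{\gamma + p^{n}\Zp}(x)\,x^{j}$ with $0 \leq j \leq a$, set
\[
\mu_{\mathrm{alg}}\bigl(\mathbf{1}_{\gamma+p^{n}\Zp}\,x^{j}\bigr) := {}_{c}\Xi^{[a;j]}_{n}(\gamma),
\]
where $\gamma$ is matched with a class of $(\Z/p^{n}\Z)^{\times}$ via the convention of \S\ref{sec:classcong}. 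Well-definedness under refinement $n \mapsto n+1$ is exactly Corollary \ref{cor:normcompatXi}: the sum of ${}_{c}\Xi^{[a;j]}_{n+1}$ over lifts equals ${}_{c}\Xi^{[a;j]}_{n}$. Since such functions span $LP^{[0,a]}$, linearity gives $\mu_{\mathrm{alg}}$. Expanding a character $\chi$ of conductor dividing $p^{n}$ as a combination of the indicators $\mathbf{1}_{\gamma+p^{n}\Zp}$ then gives the displayed formula of (1) on $LP^{[0,a]}$. Its independence of $n$ follows again from norm compatibility.

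Next verify the growth hypothesis of Proposition \ref{prop:interp}(3). Expanding $\bigl(\tfrac{x-z}{p^{n}}\bigr)^{k}$ binomially,
\[
\int_{z+p^{n}\Zp}\Bigl(\frac{x-z}{p^{n}}\Bigr)^{k}\mu_{\mathrm{alg}} = p^{-nk}\sum_{j=0}^{k}\binom{k}{j}(-z)^{k-j}\int_{z+p^{n}\Zp}x^{j}\mu_{\mathrm{alg}}.
\]
The plan is to rewrite the right-hand side, up to the unit $z^{k}$, as the value at a class of level $p^{nk}$ of $p^{-nk}\sum_{j}(-1)^{j}\binom{k}{j}\mathrm{Tw}_{-j}\bigl(\mathrm{res}^{nk}_{n}\,{}_{c}\Xi^{[a;j]}_{n}\bigr)$. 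This uses that $\mathrm{Tw}_{-j}$ is multiplication by $x^{\mp j}$ modulo $p^{nk}$, and that on $z+p^{n}\Zp$ the moment $\int x^{j}$ is approximated by $\sum_{y} y^{j}\,\mu(y+p^{nk}\Zp)$.

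The main obstacle is this bookkeeping step. One must pass from level $p^{n}$ data, spread to level $p^{nk}$ via $\mathrm{res}^{nk}_{n}$, to the actual moments $\int_{z+p^{n}}x^{j}$ with an error of size $p^{nk}\cdot p^{hn}$. This is where the convention $z \mapsto z^{-1}$ and the sign of the twist must be tracked carefully. I would try first to express $\mathrm{res}^{nk}_{n}\,{}_{c}\Xi^{[a;j]}_{n}$ as evaluation of $\mu_{\mathrm{alg}}$ against $x^{j}$ on $p^{nk}$-discs, which is legitimate because ${}_{c}\Xi^{[a;j]}_{nk}$ pushes forward to ${}_{c}\Xi^{[a;j]}_{n}$. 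Proposition \ref{prop:classcong} then bounds the expression by $Cp^{hn}$. For $k > a$ there is nothing to check, since only moments $k \leq a$ enter Proposition \ref{prop:interp}(3). The hypothesis $h < a+1$ gives $a \geq \lfloor h\rfloor$, so that proposition yields a unique ${}_{c}\mu_{\tilde{\Pi}} \in \mathscr{D}_{h}$ extending $\mu_{\mathrm{alg}}$. Uniqueness in the statement of (1) follows from Proposition \ref{prop:interp}(1).

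For part (2), choose $c$ with $(\eta_{2}\omega_{\Pi})(c)$ chosen so that $\Theta_{c}$ reduces to a unit modulo $\mathfrak m_{L}$ on every component of $\Lambda\otimes\mathcal{O}_{L}$. This is possible by the non-congruence hypothesis: the reduction of $\Theta_{c}$ at a character $\kappa$ is $c^{2}-(\eta_{2}\omega_{\Pi})(c)^{-1}\kappa(c)^{-1}$, and Chebotarev/Dirichlet produces $c$ avoiding vanishing mod $\mathfrak m$. Convolution with the measure $\Theta_{c}^{-1}$ preserves $\mathscr{D}_{h}$. Integrating $\chi(x)x^{j}$ against $\Theta_{c}$ gives $c^{2}-c^{-j}(\eta_{2}\omega_{\Pi})(c)^{-1}\chi(c)^{-1}$, which by \eqref{eq:crelation} is exactly the factor relating ${}_{c}\Xi$ and $\Xi$. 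So $\mu_{\tilde\Pi}$ has the stated moments. Independence of $c$ follows since two choices give distributions in $\mathscr{D}_{h}$ with equal moments at all $\chi(x)x^{j}$, $j \leq a$, hence they are equal by Proposition \ref{prop:interp}(1).
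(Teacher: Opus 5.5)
Your architecture matches the paper's. You define ${}_{c}\mu_{\tilde{\Pi},\mathrm{alg}}$ on $LP^{[0,a]}$ by $\int_{z+p^{n}\Zp}x^{j}={}_{c}\Xi^{[a;j]}_{n}(z)$, which is well defined by Corollary \ref{cor:normcompatXi}. You then expand $\bigl(\frac{x-z}{p^{n}}\bigr)^{k}$ binomially, bound the result with Proposition \ref{prop:classcong}, and extend with Proposition \ref{prop:interp}(3). For (2) you invert $\Theta_{c}$ and compare moments via \eqref{eq:crelation}; this is more detailed than the paper's ``follows immediately'' and is fine.

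\textbf{The gap.} The step you single out as the main obstacle would not work as you propose it. The map $\mathrm{res}^{nk}_{n}$ is the \emph{pullback} $\mathrm{Maps}((\Z/p^{n}\Z)^{\times},-)\to\mathrm{Maps}((\Z/p^{nk}\Z)^{\times},-)$. So $(\mathrm{res}^{nk}_{n}\,{}_{c}\Xi^{[a;j]}_{n})(y)={}_{c}\Xi^{[a;j]}_{n}(y\bmod p^{n})$ is constant on fibres. It is \emph{not} the level-$p^{nk}$ moment ${}_{c}\Xi^{[a;j]}_{nk}(y)=\int_{y+p^{nk}\Zp}x^{j}\,{}_{c}\mu_{\tilde{\Pi},\mathrm{alg}}$. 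The norm relation says only that ${}_{c}\Xi^{[a;j]}_{nk}$ \emph{sums} over fibres to ${}_{c}\Xi^{[a;j]}_{n}$, which does not identify it with the pullback.

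The Riemann-sum approximation $\int_{z+p^{n}\Zp}x^{j}\approx\sum_{y}y^{j}\mu(y+p^{nk}\Zp)$ is also unusable, for two reasons:
\begin{enumerate}
\item Its error involves $\int_{y+p^{nk}\Zp}(x-y)^{i}\,{}_{c}\mu_{\tilde{\Pi},\mathrm{alg}}$ for $i\geq1$. These are controlled only by the growth estimate you are trying to prove, so the argument is circular.
\item Even granting that estimate at level $nk$, the error after multiplying by $p^{-nk}$ is only $O(p^{hnk})$. For $k\geq 2$ this is weaker than the required $O(p^{hn})$.
\end{enumerate}

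\textbf{The fix.} No approximation is needed, and the direct computation is exactly the paper's. Evaluate the pullback at the class $y$ of $z$ itself. By the definition of the algebraic distribution, $(\mathrm{res}^{nk}_{n}\,{}_{c}\Xi^{[a;j]}_{n})(y)={}_{c}\Xi^{[a;j]}_{n}(z)=\int_{z+p^{n}\Zp}x^{j}\,{}_{c}\mu_{\tilde{\Pi},\mathrm{alg}}$. Hence, exactly,
\[
\int_{z+p^{n}\Zp}\Bigl(\frac{x-z}{p^{n}}\Bigr)^{k}{}_{c}\mu_{\tilde{\Pi},\mathrm{alg}}(x)=(-z)^{k}\,p^{-nk}\sum_{j=0}^{k}(-1)^{j}\binom{k}{j}z^{-j}\,{}_{c}\Xi^{[a;j]}_{n}(z).
\]
Modulo $p^{nk}$, $\Tw_{-j}$ is multiplication by $x^{-j}$ (not $x^{+j}$). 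So the right-hand side is $(-z)^{k}$ times the value at $y$ of $p^{-nk}\sum_{j}(-1)^{j}\binom{k}{j}\Tw_{-j}(\mathrm{res}^{nk}_{n}\,{}_{c}\Xi^{[a;j]}_{n})$. The only discrepancy is that $z^{-j}$ is replaced by a representative modulo $p^{nk}$. That replacement changes the quantity by at most $\max_{j}\lvert{}_{c}\Xi^{[a;j]}_{n}(z)\rvert$, which is a constant times $p^{hn}$ by Definition \ref{def:Xi}. Equivalently, as the paper does, restrict the classes to locally constant functions on $\Zp^{\times}$ and let $\Tw_{-j}$ act by exact multiplication by $z^{-j}$.

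Proposition \ref{prop:classcong} then gives the bound $Cp^{hn}$ for all $0\leq k\leq a$ directly, and the rest of your argument goes through unchanged. The $z\mapsto z^{-1}$ convention causes no difficulty, because the moment and the twisted class are evaluated at the same point.
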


\begin{proof}
 If we let $\mathscr{C}(\Zp^{\times}, \Zp)$ denote the space of continuous functions taking values in $\Zp$ and $\mathscr{C}(\Zp, L) := \mathscr{C}(\Zp^{\times}, \Zp) \hat{\otimes} L$ the space of continuous functions taking values in $L$, then we have compatible injective restriction maps 
 \[
    \mathrm{res}: \mathrm{Maps}(\left(\Z/p^{n}\Z\right)^{\times}, \Zp) \to \mathscr{C}(\Zp^{\times}, \Zp)
 \]
 for all $n \geq 1$ and twisting maps $\Tw_{j}: \mathscr{C}(\Zp^{\times}, \Zp) \to \mathscr{C}(\Zp^{\times}, \Zp)$ sending $z \mapsto f(z)$ to $z \mapsto z^{j}f(z)$ which are compatible modulo $p^{n}$ with the previously defined maps $\Tw_{j}: \mathrm{Maps}(\left(\Z/p^{n}\Z\right)^{\times}, \Z/p^{n}\Z) \to \mathrm{Maps}(\left(\Z/p^{n}\Z\right)^{\times}, \Z/p^{n}\Z)$. We equip $\mathscr{C}(\Zp^{\times}, \Zp)$ with the sup norm $\norm{f} = \mathrm{sup}_{z \in \Zp^{\times}}\vert f \vert$ and $\mathscr{C}(\Zp^{\times}, L)$ with its natural linear extension. We will abuse notation and write ${}_{c}\tilde{\Xi}^{[a; j]}_{n}$ for its restriction to $\mathscr{C}(\Zp^{\times}, C^{-1}\mathcal{O}_{L})$ and similarly $ {}_{c}\Xi^{[a; j]}_{n}$ for the analogous element of $\mathscr{C}(\Zp^{\times}, L)$.
 There is a unique element ${}_{c}\mu_{\tilde{\Pi}, \mathrm{alg}} \in  \mathscr{D}_{\mathrm{alg}}^{[0, a]}(\Zp^{\times}, L)$ satisfying
\[
  \int_{z + p^{n}\Zp}x^{j}{}_{c}\mu_{\tilde{\Pi}, \mathrm{alg}}(x) = {}_{c}\Xi^{[a; j]}_{n}(z) \in L
\]
for all $0 \leq j \leq a$, which is a well-defined algebraic distribution by the norm relation of Corollary \ref{cor:normcompatXi}. We have for $z \in \Zp^{\times}$, $n \geq 1$ 
\begin{align*}
    \int_{z + p^{n}\Zp}\left(\frac{x - z}{p^{n}}\right)^{k}{}_{c}\mu_{\tilde{\Pi}, \mathrm{alg}}(x) &= p^{-nk}\sum_{j = 0}^{k}\binom{k}{j}(-1)^{k - j}z^{k - j}\int_{z + p^{n}\Zp}x^{j}{}_{c}\mu_{\tilde{\Pi}, \mathrm{alg}}(x) \\
    &= z^{k}p^{-nk}\sum_{j = 0}^{k}\binom{k}{j}(-1)^{k - j}\mathrm{tw}_{-j}\left({}_{c}\Xi^{[a; j]}_{n}\right)(z)
\end{align*}
By Proposition \ref{prop:classcong} we thus have 
\[
\left\vert\int_{z + p^{n}\Zp}\left(\frac{x - z}{p^{n}}\right)^{k}{}_{c}\mu_{\tilde{\Pi}, \mathrm{alg}}(x)\right\vert \leq Cp^{hn}
\]
for all $0 \leq k \leq a$ and $n \geq 1$, thus by Proposition \ref{prop:interp} (using the small slope assumption $h < a + 1$) there is ${}_{c}\mu_{\tilde{\Pi}} \in \mathscr{D}_{h}(\Zp^{\times}, L)$ satisfying ${}_{c}\mu_{\tilde{\Pi}}\vert_{LP^{[0, a]}(\Zp^{\times}, L)} = {}_{c}\mu_{\tilde{\Pi}, \mathrm{alg}}$.
Let $\chi$ be a Dirichlet character of conductor $p^{n}$. Then we have 
\begin{align*}
    \int_{\Zp^{\times}}\chi(x)x^{j} {}_{c}\mu_{\tilde{\Pi}} (x) &= \sum_{z \in \left(\Z/p^{n}\Z\right)^{\times}}\chi(z)\int_{z + p^{n}\Zp}x^{j}{_{c}}\mu_{\tilde{\Pi}} (x) \\
    &= \sum_{z \in \left(\Z/p^{n}\Z\right)^{\times}}\chi(z){}_{c}\Xi^{[a; j]}_{n}(z),
\end{align*}
as required. Part (2) follows immediately. 
\end{proof}

\section{Values of the distribution as Rankin--Selberg integrals} \label{sec:comppair}

In this section we compute the values $\int\eta(x)x^{j}\mu_{\tilde{\Pi}}(x)$ as global Rankin--Selberg integrals and evaluate the resulting local zeta integrals. By Theorem \ref{thm:growth} these values equal $\eta(\Xi^{[a; j]}_{n})$, and by Remark \ref{rem:LWcomparison} the classes $\Xi^{[a; j]}_{n}$ are those of \cite[\S 6]{loefflerWilliams2021p}; the computation of this section is therefore essentially identical to \cite[\S\S 7--9]{loefflerWilliams2021p}, and we give a summary adapted to our notation, referring to \textit{op.\ cit.}\ for the details. We stress that these computations are insensitive to the slope: the only point in \textit{op.\ cit.}\ where ordinarity is used is the boundedness of the measure, which we have replaced by Theorem \ref{thm:growth}, and in Lemma 8.8 of \textit{op. cit.} which generalises immediately to the small slope setting, given here as Lemma \ref{lem:Epadicunit}.

\subsection{Archimedean data and the global integral}

Recall the generator $\zeta_{\infty}$ of \eqref{eq:piinfty}. Choosing bases $\{\delta'_{1}, \ldots, \delta'_{5}\}$ of $(\mathfrak{gl}_{3}/\mathfrak{k}^{\circ}_{3, \infty})^{\vee}$ and $\{v_{\alpha}\}$ of $V^{\vee}_{\lambda}(\C)$, there are vectors $\varphi_{\infty, r, s, \alpha} \in \Pi_{\infty}$ with
\[
    \zeta_{\infty} = \sum_{r, s = 1}^{5}\sum_{\alpha}\left(\delta'_{r}\wedge\delta'_{s}\right)\otimes\varphi_{\infty, r, s, \alpha}\otimes v_{\alpha},
\]
and we set $\varphi_{r, s, \alpha} := \varphi_{\infty, r, s, \alpha}\otimes\varphi_{f} \in \Pi$. Similarly, the Eisenstein differential is described by a basis $\{\delta_{1}, \delta_{2}\}$ of $(\mathfrak{gl}_{2}/\mathfrak{k}^{\circ}_{2, \infty})^{\vee}$ and a basis $\{w^{[j]}_{\beta}\}$ of $V^{\GL_{2}}_{(0, -j)}(\C)$; we normalise $\iota^{*}(\delta'_{i}) = \delta_{i}$ for $i = 1, 2, 3$ (extending to a basis $\{\delta_{1}, \delta_{2}, \delta_{3}\}$ on $\tilde{\mathcal{H}}_{H}$) and $\iota^{*}(\delta'_{i}) = 0$ for $i = 4, 5$, and fix the Haar measure $dh_{\infty}$ corresponding to $\delta_{1}\wedge\delta_{2}\wedge\delta_{3}$, normalised so that $\vol(K^{\circ}_{H, \infty}) = 1$. Finally, the branching maps of Section \ref{sec:branch} induce pairings
\[
    \langle -, -\rangle_{a, j}: V^{\vee}_{\lambda} \times V^{\GL_{2}}_{(0, -j)} \to \mathbb{G}_{a}, \qquad (\mu, v) \mapsto \mu\left(\br^{[a; j]}\text{-image of } v\right),
\]
as in \cite[(4.4)]{loefflerWilliams2021p}, integral for our choices of lattices.

\begin{definition} \label{def:globalzeta}
Let $\chi_{1}, \chi_{2}$ be Dirichlet characters and $s_{1}, s_{2} \in \C$. For $\varphi \in \Pi$ and $\Phi \in \mathcal{S}(\A^{2}, \C)$, set
\[
    \mathcal{Z}\left(\varphi, \Phi; \chi_{1}, \chi_{2}, s_{1}, s_{2}\right) := \int_{\GL_{2}(\Q)\backslash\GL_{2}(\A)}\varphi\left(\iota(g, 1)\right)\,E^{\Phi}(g; \hat{\chi}_{2}, s_{2})\,\hat{\chi}_{1}(\det g)\,\|\det g\|^{s_{1} - \frac{1}{2}}\,dg.
\]
For $\mathfrak{Re}(s_{1}) \gg 0$ this factors as an Euler product $\mathcal{Z} = \prod_{v}Z_{v}(W_{\varphi, v}, \Phi_{v}; \hat{\chi}_{1, v}, \hat{\chi}_{2, v}, s_{1}, s_{2})$ of local Jacquet--Piatetski-Shapiro--Shalika zeta integrals \cite{JPSS83}, cf.\ \cite[(7.8), Definition\ 8.1]{loefflerWilliams2021p}.
\end{definition}

\begin{proposition} \label{prop:RSvalues}
Let $\eta$ be a Dirichlet character of $p$-power conductor, and let $n = \max(1, v_{p}(\mathrm{cond}\,\eta))$. Then for $0 \leq j \leq a$,
\[
    \int_{\Zp^{\times}}\eta(x)x^{j}\ \mu_{\tilde{\Pi}}\left(\phi_{\varphi_{f}}, \Phi^{(p)}\right)(x) = \frac{\vol(\mathcal{U}^{H}_{n, p})^{-1}}{p^{n}\alpha_{p}^{n}\,\Theta_{\Pi}}\sum_{r, s, t}\varepsilon_{rst}\sum_{\alpha, \beta}\left\langle v_{\alpha}, w^{[j]}_{\beta}\right\rangle_{a, j}\mathcal{Z}\left(u\tau^{n}\cdot\varphi_{r, s, \alpha}, \Phi; \eta, \omega_{\Pi}\eta\eta_{2}, \tfrac{1 - j}{2}, -\tfrac{j}{2}\right),
\]
where $\varepsilon_{231} = 1 = -\varepsilon_{132}$ and $\varepsilon_{rst} = 0$ otherwise, $\Phi = \Phi_{\infty}^{j + 2}\Phi^{(p)}\Phi_{p, R}$, and $\vol(\mathcal{U}^{H}_{n, p})^{-1} = p^{2R + 2n}(1 - p^{-1})(1 - p^{-2})$ with $R = \max(n, r(\tilde{\Pi}))$.
\end{proposition}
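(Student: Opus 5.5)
We reduce the statement to the corresponding finite-level computation of \cite[\S 7]{loefflerWilliams2021p}. By Theorem \ref{thm:growth}(2), the left-hand side equals $\sum_{\gamma}\eta(\gamma)\Xi^{[a;j]}_{n}(\gamma) = \alpha^{-n}\,\eta\bigl(\tilde{\Xi}^{[a;j]}_{n}\bigr)$, where $\tilde{\Xi}^{[a;j]}_{n}$ is the unsmoothed class paired with $\phi_{n}$. The independence of $c$ in that theorem, together with \eqref{eq:crelation}, lets us work directly with the unsmoothed Eisenstein classes $\mathrm{Eis}^{j}_{\Phi_{f,t}}$. When $\mathrm{cond}\,\eta = 1$ we take $n = 1$, and when $\mathrm{cond}\,\eta = p^{m}$ with $m \geq 1$ we take $n = m$. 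Corollary \ref{cor:normcompatXi} guarantees that the sum is independent of any larger choice of $n$, so these particular choices are harmless.

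Next we unwind the definition. By Definition \ref{def:zclass} and Lemma \ref{lem:mapcomparison}, $z^{[a,j]}_{n}\cup\mathbb{S}_{j} = \iota^{L}_{a,j,n,t}(\mathrm{Eis}^{j}_{\Phi_{f,t}})$ for any $t \geq \max(n,r)$. We take $t = R$, so that $\Phi_{f,t} = \Phi^{(p)}\Phi_{p,R}$. Remark \ref{rem:LWcomparison} then identifies $(1\otimes e_{\eta_{2}})\iota^{L}_{a,j,n,R}(\mathrm{Eis}^{j})$ with the class $\xi^{[a,j]}_{n,\mathrm{LW}}$ of \cite[\S 6]{loefflerWilliams2021p}, including the normalisation $p^{an}$ and the component labels. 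Applying $\eta$ to the group-ring element amounts to cupping with $\hat{\eta}\circ\nu_{1}$ on the components. Since $\Tw_{-j}$ is the identity on moments, the $x^{j}$ factor has already been absorbed by $T_{j}$.

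The main computation moves the Poincaré pairing to $H$. We use adjunction of $[u\tau^{n}]_{*}$ and $\iota_{*}$ with pullback, so that
\[
\langle \iota^{L}(\mathrm{Eis}^{j}), \phi_{n}\rangle = \int_{\tilde{Y}^{H}} \mathrm{Eis}^{j} \cup \langle -, \iota^{*}(u\tau^{n})^{*}\phi\rangle_{a,j}.
\]
We then pass to de Rham/relative Lie algebra cohomology. Substituting the expansion of $\zeta_{\infty}$ and the Eisenstein differential $-\mathcal{E}^{j+2}_{\Phi_{f}}(dz)^{\otimes j}d\tau$, and using the normalisation $\iota^{*}\delta'_{i} = \delta_{i}$, the wedge over $\tilde{\mathcal{H}}_{H}$ produces the $\varepsilon_{rst}$ combination. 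The volume factor $\vol(\mathcal{U}^{G,(p)}(1,D)\cap H)$ converts the finite sum over $Y^{H}$ into an adelic integral over $\GL_{2}(\Q)\backslash\GL_{2}(\A)$, at the cost of $\vol(\mathcal{U}^{H}_{n,p})^{-1}$. The index computation of Lemma \ref{lem:inclusion}(1) supplies the explicit value $p^{2R+2n}(1-p^{-1})(1-p^{-2})$.

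Matching the twist to a global zeta integral means recognising $\eta\circ\nu_{1}$ and $\eta_{2}\circ\nu_{2}$ as $\hat{\eta}(\det g)$ times the central character of the Eisenstein series. The $\GL_{1}$-component integrates out and turns the Eisenstein character into $\omega_{\Pi}\eta\eta_{2}$. The value \eqref{eq:eisspecial} then gives the parameters $s_{1} = \frac{1-j}{2}$ and $s_{2} = -\frac{j}{2}$.

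Finally we collect the constants. The factor $\alpha^{-n}p^{an}$ together with $\alpha = p^{a+1}\alpha_{p}$ gives $(p\alpha_{p})^{-n}$, and $\Theta_{\Pi}$ comes from $\phi = \phi_{\Pi}(W)/\Theta_{\Pi}$. The main obstacle is careful bookkeeping of the normalisations: the sign and inverse conventions in the identification $\Delta_{1,n}\cong(\Z/p^{n}\Z)^{\times}$, which send $z\mapsto z^{-1}$ and thus decide whether $\eta$ or $\eta^{-1}$ appears, and the powers of $p$. For these we would follow \cite[Prop.\ 7.x]{loefflerWilliams2021p} verbatim, since none of it involves the slope.
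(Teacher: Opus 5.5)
Your proposal is correct and takes the same route as the paper. Theorem \ref{thm:growth} reduces the left-hand side to the finite-level quantity $\eta\bigl(\Xi^{[a;j]}_{n}\bigr)$, and Remark \ref{rem:LWcomparison} identifies this with the class of \cite{loefflerWilliams2021p}; the remaining steps (cup product as an integral over $\tilde{Y}^{H}$, summing over components against $\eta$ and $\eta_{2}$, collapsing the central integral into $R_{\omega_{\Pi}\eta\eta_{2}}$, and the volume from Lemma \ref{lem:inclusion}(1)) are taken verbatim from \S 7 of \textit{op.\ cit.}, and your check $\alpha^{-n}p^{an} = (p\alpha_{p})^{-n}$ is right.
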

\begin{proof}
By Theorem \ref{thm:growth} the left-hand side is $e_{\eta}\left(\Xi^{[a; j]}_{n}\right)$. Via Remark \ref{rem:LWcomparison}, this is precisely the quantity computed in \cite[\S 7.1--7.3]{loefflerWilliams2021p}: one expresses the cup products as integrals of differential forms over $\tilde{Y}^{H}$, sums over the connected components against $\eta$ and $\eta_{2}$, unfolds the Eisenstein series, and performs the change of variables collapsing the central integration into the projection $R_{\omega_{\Pi}\eta\eta_{2}}$ on the Schwartz data; the volume computation is Lemma \ref{lem:inclusion}(1). We refer to \textit{op.\ cit.}\ for the details, which apply verbatim.
\end{proof}

\subsection{Local zeta integrals} \label{sec:localzeta}

The local integrals at finite places $v \neq p$ where all data is unramified take the value $1$ after normalising by the local $L$-factors, and at the remaining places away from $p$ they may be made identically $1$ by a suitable finite linear combination of test data:

\begin{theorem}[Jacquet--Piatetski-Shapiro--Shalika] \label{JPSS}
    Let $v$ be a finite place, $\pi$ a generic irreducible representation of $\GL_{3}(\Q_{v})$, and $\chi_{1}, \chi_{2}$ smooth characters of $\Q_{v}^{\times}$. The normalised integral
    \[
        \tilde{Z}\left(W, \Phi; \chi_{1}, \chi_{2}, s_{1}, s_{2}\right) := \frac{Z(W, \Phi; \chi_{1}, \chi_{2}, s_{1}, s_{2})}{L(\pi\times\chi_{1}, s_{1} + s_{2} - \frac{1}{2})\,L(\pi\times\chi_{1}\chi_{2}^{-1}, s_{1} - s_{2} + \frac{1}{2})}
    \]
    is a polynomial in $\ell^{\pm s_{1}\pm s_{2}}$. The ideal generated by these polynomials for varying $(W, \Phi)$ is the unit ideal, and there are finite collections $\{W_{i}\}, \{\Phi_{i}\}$ defined over $\overline{\Q}$ with $\sum_{i}\tilde{Z}(W_{i}, \Phi_{i}; \chi_{1}, \chi_{2}, s_{1}, s_{2}) = 1$ identically. If all data is unramified and normalised spherical, then $\tilde{Z} = 1$.
\end{theorem}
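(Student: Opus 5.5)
The plan is to reduce the statement to the local Rankin--Selberg theory of Jacquet--Piatetski-Shapiro--Shalika for $\GL_{3}\times\GL_{2}$, with the Godement--Siegel section playing the role of the $\GL_{2}$-Whittaker function.

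\textbf{Unfolding.} First I would write $Z(W,\Phi;\chi_1,\chi_2,s_1,s_2)$ as the local integral
\[
\int_{N_{\GL_2}(\Q_v)\backslash \GL_2(\Q_v)} W(\iota(g,1))\, W_{f^{\Phi}}(g)\,\chi_1(\det g)\|\det g\|^{s_1-\frac12}\,dg .
\]
Here $W_{f^{\Phi}}$ is the Whittaker function (computed by a Jacquet integral) of the Godement--Siegel section $f^{\Phi}(\,\cdot\,;\chi_2,s_2)$. This section lies in a principal series $\sigma_{s_2}$ of $\GL_2(\Q_v)$ with characters $\chi_2^{-1}\|\cdot\|^{\pm(s_2-\frac12)}$, up to the normalisation implicit in \eqref{eq:eisspecial}.

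\textbf{Polynomiality and the unit ideal.} By the JPSS theorem for $\GL_3\times\GL_2$, the integrals $\Psi(s_1,W,W')$ span a fractional ideal of $\C[\ell^{\pm s_1}]$ containing $1$, with generator $L(\pi\times\sigma_{s_2}\otimes\chi_1,s_1)$. Multiplicativity of $L$-factors for induced representations factors this generator as the product of the two $\GL_3\times\GL_1$ factors in the denominator of $\tilde Z$. To get polynomiality jointly in $\ell^{\pm s_1\pm s_2}$, I would use the standard variant in which the Whittaker functions $W_{f^{\Phi}}$ come from Schwartz functions (cf.\ \cite{JPSS83} and its use in \cite[\S 8]{loefflerWilliams2021p}). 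As $\Phi$ ranges over $\mathcal{S}(\Q_v^2)$, the sections $f^{\Phi}$ exhaust $\sigma_{s_2}$ uniformly in $s_2$ (a ``good section'' argument). The remaining analytic point is to check that the $\mathrm{gcd}$ is attained by data independent of $s_2$. I expect this to be the main obstacle: one must pass from the fixed-$s_2$ statement to an identity of Laurent polynomials in two variables. I would try to handle it by explicitly choosing $\Phi$ supported near $(0,1)$, which makes $f^{\Phi}$ supported on the big cell. Then $\tilde Z$ collapses to a $\GL_3\times\GL_1$ integral, where Godement--Jacquet/JPSS give $1$ directly for suitable $W$.

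\textbf{Rationality.} Once some finite family over $\C$ gives $\sum_i\tilde Z=1$, I would descend to $\overline{\Q}$ using Galois equivariance. For $\sigma\in\mathrm{Aut}(\C/\Q)$ one has $\sigma(Z(W,\Phi;\ldots))=Z(\sigma W,\sigma\Phi;\sigma\chi_1,\sigma\chi_2,\ldots)$ as Laurent series in $\ell^{-s_i}$, by a change of variables in the unipotent integration. The $L$-factors transform in the same way. Hence the ideal is defined over the field of rationality. Alternatively and more simply, the explicit big-cell choice above already has $\Phi$ a $\overline{\Q}$-valued characteristic function and $W$ in the $\overline{\Q}$-Whittaker model.

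\textbf{Unramified case.} When all data are spherical, I would use the Iwasawa decomposition to reduce to the torus. Then I would insert Shintani's formula for $W$ (Schur polynomials in the Satake parameters). The Godement section contributes the Hecke series of the unramified principal series. The Cauchy identity then sums the result to exactly the product of the two $L$-factors, so $\tilde Z=1$.
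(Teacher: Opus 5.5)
The paper gives no argument of its own here. It cites \cite{JPSS83} and \cite[Theorem~8.2]{loefflerWilliams2021p}, so your sketch has to stand on its own.

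The parts that reduce to one-variable JPSS theory follow the expected lines: unfolding to the $\GL_3\times\GL_2$ integral against the Whittaker function $W^{\Phi}$ of the Godement--Siegel section, multiplicativity for generic $s_2$, and Shintani plus the Cauchy identity in the unramified case. There is one slip. The section $f^{\Phi}(\cdot\,;\chi_2,s_2)$ lies in the normalised induction of $\|\cdot\|^{s_2-\frac12}\otimes\chi_2^{-1}\|\cdot\|^{\frac12-s_2}$, not of $\chi_2^{-1}\|\cdot\|^{\pm(s_2-\frac12)}$, and only these characters give the stated denominator.

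The genuine gap is the step you flag yourself: producing $\sum_i\tilde Z(W_i,\Phi_i)=1$ identically in $(s_1,s_2)$. Your proposed fix cannot work, for two reasons.
\begin{itemize}
\item A one-variable $\GL_3\times\GL_1$ integral has gcd a single factor $L(\pi\times\mu,s)$. But $\tilde Z=1$ forces $Z=L(\pi\times\chi_1,s_1+s_2-\frac12)\,L(\pi\times\chi_1\chi_2^{-1},s_1-s_2+\frac12)$, whose poles lie on two transverse families of lines. Both characters of $\sigma_{s_2}$ must contribute, exactly as in your unramified computation.
\item Localising $\Phi$ near $(0,1)$ does not localise the integrand on $N_2\backslash\GL_2$, because all $K$-translates $k\cdot\Phi$ still enter through the Iwasawa decomposition. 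The choices that genuinely make the integral collapse are $W$ with $W\circ\iota$ compactly supported modulo $N_2$, as Kirillov theory permits. These give $Z\in\C[\ell^{\pm s_1},\ell^{\pm s_2}]$ with no poles at all, so $\tilde Z$ vanishes along the polar divisor of the two $L$-factors. That proves only the easy inclusion (constants lie in the span of the $Z$'s), not that the $\tilde Z$ generate the unit ideal.
\end{itemize}
For the same reason, your remark that the big-cell data is already defined over $\overline{\Q}$ does not help.

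What is needed is a genuinely two-variable argument.
\begin{enumerate}
\item Note that right translation by $\mathrm{diag}(\varpi,1)$ multiplies $Z$ by a nonzero constant times $\ell^{s_1+s_2}$. Replacing $\Phi(v)$ by $\Phi(\varpi v)$ multiplies it by a constant times $\ell^{2s_2}$. Hence the $\C$-span $J$ of the $\tilde Z$ is an ideal of $\C[\ell^{\pm(s_1+s_2)},\ell^{\pm(s_1-s_2)}]$, and ``unit ideal'' becomes equivalent to $\sum_i\tilde Z(W_i,\Phi_i)=1$. Your proposal omits this step.
\item By the Nullstellensatz, it then suffices that $J$ has no common zero. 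At a point whose $s_2$-coordinate is generic, $\sigma_{s_2}$ is irreducible and the $W^{\Phi}$ exhaust its Whittaker model, so the fixed-$s_2$ JPSS theorem supplies a non-vanishing $\tilde Z$.
\item The finitely many exceptional values of $\ell^{-2s_2}$ (reducibility of $\sigma_{s_2}$, poles of the Tate integral) need a separate argument. One option is JPSS's results for induced representations of Langlands type together with the symmetry $\Phi\mapsto\hat\Phi$.
\end{enumerate}
Joint polynomiality likewise does not follow from fixed-$s_2$ statements alone. You must also rule out denominators depending only on $\ell^{-s_2}$, for instance by using holomorphy of $Z$ in $s_2$ for $\mathrm{Re}(s_1)\gg0$.
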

\begin{proof}
    See \cite{JPSS83} and \cite[Theorem\ 8.2]{loefflerWilliams2021p}.
\end{proof}

At $v = p$ we use the following evaluation, which is \cite[Theorem\ 8.7]{loefflerWilliams2021p}. We emphasise that the proof in \textit{op.\ cit.}\ is a purely local computation with the refined newvector $W^{\alpha}$ of Proposition \ref{prop:newvector} and makes no use of ordinarity; it is valid for any regular unramified $P_{1}$-refinement of finite slope.

\begin{theorem} \label{thm:localzetap}
    Let $\eta$ be of conductor $p^{n_{1}}$, $n = \max(1, n_{1})$, $R = \max(n, r)$, and take the test data $W = u\tau^{n}\cdot W^{\alpha}$ and $\Phi_{R} = \mathrm{ch}\left((0, 1) + p^{R}\Zp^{2}\right)$. Then
    \[
        Z\left(W, \Phi_{R}; \hat{\eta}_{p}, \hat{\omega}_{\Pi, p}\hat{\eta}_{p}\hat{\eta}_{2, p}, \tfrac{1 - j}{2}, -\tfrac{j}{2}\right) = \frac{\alpha_{p}^{n}}{p^{2R + n}(1 - p^{-1})(1 - p^{-2})}\cdot e_{p}\left(\Pi_{p}\times\eta_{p}, -j\right)\cdot\frac{\mathcal{E}_{0}\cdot L\left(\Pi_{p}\times\omega_{\Pi, p}^{-1}\eta_{2, p}^{-1}, 1\right)}{\varepsilon\left(\Pi_{p}\times\omega_{\Pi, p}^{-1}\eta_{2, p}^{-1}, 1\right)},
    \]
    where, with $\mathcal{E}_{s} := L(\sigma'_{p}\times\sigma_{p}\eta_{2, p}, s)/L(\Pi^{\vee}_{p}\times\omega_{\Pi, p}\eta_{2, p}, s)$, the factor $\mathcal{E}_{0}$ is as in the following lemma.
\end{theorem}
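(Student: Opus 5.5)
This is the local zeta integral computation at $p$. I would follow the route of \cite[Theorem\ 8.7]{loefflerWilliams2021p}, checking at each step that only the $U_{p,1}$-eigenvalue relation $U_{p,1}W^{\alpha} = p^{1+a}\alpha_{p}W^{\alpha}$ and the support and torus values of $W^{\alpha}$ are used, and never that $v_{p}(\alpha)=0$.

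The first step is to unfold the Godement--Siegel section. Since $\Phi_{R}$ is supported on $(0,1)+p^{R}\Zp^{2}$, the inner integral $f^{\Phi_{R}}(g;\chi_{2},s_{2})$ vanishes unless $g$ lies in $B_{\GL_{2}}(\Qp)\cdot \mathcal{U}^{\GL_{2}}_{1}(p^{R})$-type cosets. On that set it equals $\chi_{2}^{-1}(d)\|\cdots\|^{s}$ times $\vol(1+p^{R}\Zp)$. The JPSS integral $\int_{N_{2}\backslash \GL_{2}} W(\iota(g))f^{\Phi}(g)\chi_{1}(\det g)\|\det g\|^{s_{1}-1/2}dg$ then reduces, via the Iwasawa decomposition $g = n\,\mathrm{diag}(y_{1},y_{2})k$, to a torus integral over $y_{1}$. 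This integral is weighted by an average of $W$ over the compact open $k \in \GL_{2}(\Zp)$ lying in the support, with the $y_{2}$-variable absorbed by the central character twist $\hat{\omega}_{\Pi}\hat{\eta}\hat{\eta}_{2}$.

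The second step is to compute $W = u\tau^{n}W^{\alpha}$ along $\iota(\mathrm{diag}(y,1)k)$. I would write $\iota(\cdot)u\tau^{n}$ using the conjugation formula \eqref{eq:uconj}. The matrix $u$ converts the $H$-mirabolic into the upper-triangular group of $\GL_{3}$, so the integrand becomes $W^{\alpha}$ evaluated at $\mathrm{diag}(y,\,\ast,\,1)\cdot n(x)$ times $\tau^{n}$ with $x$ ranging over $p^{-?}\Zp$. The additive character $\psi(x)$ then produces a Gauss sum against $\eta$ when $\mathrm{cond}(\eta)=p^{n_{1}}>1$. Conjugating by $\tau^{n}$ accounts for the factor $\alpha_{p}^{n}$: $n$ applications of the Hecke relation express $\tau^{n}$-translates of $W^{\alpha}$, restricted to the relevant cosets, as $(p^{1+a}\alpha_{p})^{n}p^{-an}\cdots$ times values of $W^{\alpha}$ itself. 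This is the only place the eigenvalue enters, and it enters linearly, so it is slope-agnostic.

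The third step is to insert the torus values of $W^{\alpha}$ from \cite[Proposition\ 2.25]{loefflerWilliams2021p}. These are a geometric series in $\alpha_{p}$ times values of the $\GL_{2}$-newvector of $\sigma'_{p}$. Summing over $v_{p}(y)$ then gives the ratio of $L$-factors that defines $e_{p}(\Pi_{p}\times\eta_{p},-j)$ in Example \ref{ex:epexplicit}, together with the leftover $\GL_{2}$-factor $\mathcal{E}_{0}L(\Pi_{p}\times\omega^{-1}\eta_{2}^{-1},1)/\varepsilon(\cdots)$. I would treat the cases $\eta=1$ and $\eta$ ramified separately. The main obstacle is the ramified-$\eta$ and unramified-$\eta$ bookkeeping of these geometric series and Gauss sums. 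A further obstacle is convergence: for finite slope, $\alpha_{p}$ may have $|p^{j}\alpha_{p}|$ large, so the geometric sums should first be evaluated for $\mathfrak{Re}(s_{1})\gg0$ and then continued meromorphically. Regularity of $\sigma_{p}$ ensures that $1-p^{j}\alpha_{p}$ and the $\mathcal{E}$-factor denominators do not vanish identically. The volume constant $p^{-2R-n}(1-p^{-1})^{-1}(1-p^{-2})^{-1}$ comes from $\vol(1+p^{R}\Zp)$, the $\GL_{2}(\Zp)$-support, and the size of the $x$-range.
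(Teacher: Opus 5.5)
Your plan is the paper's argument: the paper proves this by citing \cite[Theorem 8.7]{loefflerWilliams2021p} and observing that its proof is a purely local computation with the refined newvector $W^{\alpha}$ that never uses ordinarity, so it holds verbatim for any regular unramified $P_{1}$-refinement of finite slope. Two side remarks in your sketch are off, though neither affects the argument: convergence of the local zeta integral depends on the complex absolute value of $\alpha_{p}$, which the slope (a $p$-adic quantity) does not change, so finite slope raises no new convergence issue; and regularity does not govern whether $1-p^{j}\alpha_{p}$ or the $\mathcal{E}$-denominators vanish (the identity is one of meromorphic functions) --- its role is to make the relevant eigenspace genuine and one-dimensional, so that $W^{\alpha}$ and its torus values are well defined (Proposition \ref{prop:newvector}).
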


\begin{lemma} \label{lem:Epadicunit}
    The factor $\mathcal{E}_{0}$ equals $1$ if the induction of $\sigma_{p}\times\sigma'_{p}$ to $\GL_{3}(\Qp)$ is reducible, and equals $1 - \omega_{\Pi}(p)\eta_{2}(p)\alpha_{p}^{-1}$ otherwise. If $\tilde{\Pi}$ has small slope then $\mathcal{E}_{0}$ is a $p$-adic unit.
\end{lemma}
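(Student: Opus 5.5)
The lemma has two parts: an explicit evaluation of $\mathcal{E}_{0}$, and a unit statement under small slope. For the evaluation I would follow the case analysis behind \cite[Lemma 8.8]{loefflerWilliams2021p}.

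Write $\mathcal{E}_{0} = L(\sigma'_{p}\times\sigma_{p}\eta_{2,p},0)/L(\Pi_{p}^{\vee}\times\omega_{\Pi,p}\eta_{2,p},0)$. Since $\sigma_{p}$ is an unramified character and $\eta_{2,p}$ is unramified ($D$ is prime to $p$), both numerator and denominator are inverses of polynomials in $p^{-s}$ at $s=0$. These polynomials are determined by the unramified constituents of the relevant Jacquet modules.

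If $\Pi_{p}$ is the full irreducible induction $\mathrm{Ind}(\sigma_{p}\times\sigma'_{p})$, then the $L$-factor of $\Pi_{p}^{\vee}\times\omega_{\Pi}\eta_{2}$ factors as
\[
L(\sigma_{p}^{-1}\omega_{\Pi}\eta_{2},s)\,L(\sigma_{p}'^{\vee}\otimes\omega_{\Pi}\eta_{2},s).
\]
Using $\sigma_{p}'^{\vee}\otimes\omega_{\Pi} \cong \sigma'_{p}\otimes\sigma_{p}$ (the central character of $\sigma'_{p}$ is $\omega_{\Pi}\sigma_{p}^{-1}$), the second factor cancels the numerator. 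This leaves
\[
\mathcal{E}_{0} = L(\sigma_{p}^{-1}\omega_{\Pi}\eta_{2},0)^{-1} = 1-\omega_{\Pi}(p)\eta_{2}(p)\alpha_{p}^{-1}.
\]

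If the induction is reducible, $\Pi_{p}$ is a generic constituent: either a Steinberg twist, or a representation induced from $\theta\times(\mathrm{St}_{2}\otimes\lambda)$ with $\sigma_{p} = \lambda|\cdot|^{1/2}$. In these cases one checks from the explicit $L$-factors of the example in Section~3 that numerator and denominator coincide, so $\mathcal{E}_{0}=1$. This is the content of op.\ cit., and the computation does not involve slopes.

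For the unit statement, only the irreducible case matters. The term $\omega_{\Pi}(p)\eta_{2}(p)$ is a root of unity, since $\omega_{\Pi}$ and $\eta_{2}$ are finite order. Hence
\[
v_{p}\bigl(\omega_{\Pi}(p)\eta_{2}(p)\alpha_{p}^{-1}\bigr) = -v_{p}(\alpha_{p}) = -(h-a-1) = a+1-h.
\]
Small slope means $h<a+1$, so this valuation is strictly positive. Therefore $1-\omega_{\Pi}(p)\eta_{2}(p)\alpha_{p}^{-1}\equiv 1$ modulo the maximal ideal, and it is a $p$-adic unit.

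The only delicate step is the bookkeeping in the reducible case, namely matching the $\mathrm{St}_{2}$ $L$-factors on both sides. I would handle it by quoting op.\ cit.\ directly, since it is slope-independent.
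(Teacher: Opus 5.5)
Your proposal is correct and follows the paper's route. The formula for $\mathcal{E}_{0}$ is taken from Loeffler--Williams; you additionally verify the irreducible case directly via $\sigma_{p}'^{\vee}\otimes\omega_{\Pi}\cong\sigma'_{p}\otimes\sigma_{p}$ rather than through the paper's identity $L(\Pi_{p}^{\vee}\times\omega_{\Pi,p},s)=L(\wedge^{2}\Pi_{p},s)$. The unit statement is the same valuation computation $v_{p}(\alpha_{p}^{-1})=a+1-h>0$. One small nit, which the paper shares: if $\omega_{\Pi}$ is ramified at $p$, then $\omega_{\Pi}(p)\eta_{2}(p)=0$ rather than a root of unity, but in that case $\mathcal{E}_{0}=1$ and the conclusion is trivial.
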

\begin{proof}
    The identification of $\mathcal{E}_{0}$ is \cite[Lemma\ 8.8, Rem.\ 8.9]{loefflerWilliams2021p}, using $L(\Pi^{\vee}_{p}\times\omega_{\Pi, p}, s) = L(\wedge^{2}\Pi_{p}, s)$. That $\mathcal{E}_{0}$ is a $p$-adic unit is immediate in the reducible case, in the irreducible case, small slope gives $v_{p}(\alpha_{p}^{-1}) > 0$ (Definition \ref{def:smallslope}), while $\omega_{\Pi}(p)\eta_{2}(p)$ is a root of unity, so $\equiv 1 \bmod \mathfrak{m}_{L}$. 
\end{proof}

At $v = \infty$, by \cite[Lemma\ 8.10]{loefflerWilliams2021p} (following \cite{JPSS83} and \cite[\S 1]{mahnkopf00}) the local integral is a polynomial multiple of the expected pair of archimedean $L$-factors, and combining the sums over $(r, s, t, \alpha, \beta)$ into a single archimedean quantity we set
\[
    \tilde{e}_{\infty}(\zeta_{\infty}\times\eta_{\infty}, -j) := \sum_{r, s, t}\varepsilon_{rst}\sum_{\alpha, \beta}\left\langle v_{\alpha}, w^{[j]}_{\beta}\right\rangle_{a, j}Z_{\infty}\left(W_{\infty, r, s, \alpha}, \Phi^{j + 2}_{\infty}; \hat{\eta}_{\infty}, \hat{\omega}_{\Pi, \infty}\hat{\eta}_{\infty}, \tfrac{1 - j}{2}, -\tfrac{j}{2}\right),
\]
which is non-zero by the non-vanishing results of Kasten--Schmidt \cite{kastenschmidt13} and Sun \cite{sun17}, and depends only on $\Pi_{\infty}$, $\zeta_{\infty}$ and $j$.

\subsection{Interpolation, and the factor at infinity} \label{sec:interpolation}

Combining Proposition \ref{prop:RSvalues} with Theorems \ref{JPSS} and \ref{thm:localzetap} (for test data trivialising the away-from-$p$ integrals, as fixed in Notation \ref{not:testdata} below), the volume factors and powers of $\alpha_{p}$ cancel and we obtain: for all $(-j, \eta) \in \Crit^{-}_{p}(\Pi)$,
\begin{equation} \label{eq:interpolationraw}
    \int_{\Zp^{\times}}\eta(x)x^{j}\mu_{\tilde{\Pi}}(x) = \tilde{e}_{\infty}\left(\zeta_{\infty}\times\eta_{\infty}, -j\right)\,e_{p}\left(\Pi_{p}\times\eta_{p}, -j\right)\cdot\frac{L^{(p)}\left(\Pi\times\eta, -j\right)}{\Omega^{-}_{\Pi}},
\end{equation}
where
\begin{equation} \label{eq:period}
    \Omega^{-}_{\Pi} := \frac{\Theta_{\Pi}\cdot\varepsilon\left(\Pi_{p}\times(\omega_{\Pi, p}\eta_{2, p})^{-1}, 1\right)}{\mathcal{E}_{0}\cdot L\left(\Pi\times(\omega_{\Pi}\eta_{2})^{-1}, 1\right)} \in \C^{\times}.
\end{equation}
Indeed, this quantity is well defined and non-zero since the $L$-value is non-critical-central and non-vanishing, and $\mathcal{E}_{0} \neq 0$ by Lemma \ref{lem:Epadicunit}.

It remains to identify $\tilde{e}_{\infty}$ with the Coates--Perrin-Riou factor $e_{\infty}$ of Section \ref{sec:einfty}. The quantity $\tilde{e}_{\infty}(\zeta_{\infty}\times\eta_{\infty}, -j)$ is precisely the same archimedean factor as in \cite{loefflerWilliams2021p}, depending only on $(\Pi_{\infty}, \zeta_{\infty}, j)$, and in particular is insensitive to $p$ and to the slope. By \cite[\S 9.3--9.5]{loefflerWilliams2021p} where it is shown, by comparison with symmetric square $p$-adic $L$-functions at the prime $3$ for auxiliary self-dual representations sharing the given $\Pi_{\infty}$, that the $\tilde{e}_{\infty}$ satisfy the same $j$-compatibility as the $e_{\infty}$. We may therefore normalise $\zeta_{\infty}$ (rescaling $\Theta_{\Pi}$ and $\Omega^{-}_{\Pi}$ accordingly) so that
\begin{equation} \label{eq:einftyid}
    \tilde{e}_{\infty}\left(\zeta_{\infty}\times\eta_{\infty}, -j\right) = e_{\infty}\left(\Pi_{\infty}\times\eta_{\infty}, -j\right) \qquad \text{for all } 0 \leq j \leq a.
\end{equation}

\section{The $p$-adic $L$-function} \label{sec:padicL}

\begin{definition}\label{not:testdata}
Fix the following test data:
\begin{itemize}
    \item at each $v \nmid p\infty$: a finite set $I_{v}$ and collections $W_{v, i} \in \mathcal{W}(\Pi_{v}, E)$, $\Phi_{v, i} \in \mathcal{S}(\Q_{v}^{2}, \Z)$, $i \in I_{v}$, with $\sum_{i}\tilde{Z}(W_{v, i}, \Phi_{v, i}; 1, \hat{\omega}_{\Pi, v}\hat{\eta}_{2, v}, s_{1}, s_{2}) = 1$ identically (Theorem \ref{JPSS}); when $\Pi_{v}$, $\omega_{\Pi, v}$ and $\eta_{2, v}$ are unramified we take $|I_{v}| = 1$ with normalised spherical data (so $I := \prod_{v}I_{v}$ is finite). Note that the identity $\sum_{i}\tilde{Z} = 1$ persists under twisting $\chi_{1}, \chi_{2}$ by unramified characters, so applies with $(\chi_{1}, \chi_{2}) = (\hat{\eta}_{v}, \hat{\omega}_{\Pi, v}\hat{\eta}_{v}\hat{\eta}_{2, v})$ for $\eta$ of $p$-power conductor;
    \item at $p$: $\varphi_{p}$ with $\mathcal{W}(\varphi_{p}) = W^{\alpha}$ (Proposition \ref{prop:newvector});
    \item this yields vectors $\varphi_{f, i} \in \Pi_{f}$ and $\Phi_{f, i}^{(p)} \in \mathcal{S}(\A_{f}^{(p), 2}, \Z)$ for $i \in I$; fix a prime-to-$p$ level $\mathcal{U}^{G, (p)}$ fixing all of them, and normalise $\Theta_{\Pi}$ so that all $\phi_{\varphi_{f, i}}$ are integral as in Section \ref{sec:autoclasses}.
\end{itemize}
\end{definition}

\begin{definition} \label{def:padicL}
Let $\tilde{\Pi}$ be a regular, unramified $P_{1}$-refined RACAR of weight $(a, 0, -a)$, and slope $h < a + 1$ and let $\iota$ denote the involution of $\mathscr{D}_{h}(\Zp^{\times}, L)$ induced by $x \mapsto x^{-1}$ on $\Zp^{\times}$. The \emph{left-half $p$-adic $L$-function} of $\tilde{\Pi}$ is
\[
    L^{-}_{p}(\tilde{\Pi}) := \iota\left(\sum_{i \in I}\mu_{\tilde{\Pi}}\left(\phi_{\varphi_{f, i}}, \Phi^{(p)}_{f, i}\right)\right) \in \mathscr{D}_{h}(\Zp^{\times}, L).
\]
\end{definition}

\begin{remark}
Following \cite{loefflerWilliams2021p}, the involution $\iota$ is included so that the interpolation property holds at the integers $\{0, -1, \ldots, -a\}$ rather than $\{0, 1, \ldots, a\}$, matching the critical range of the complex $L$-function. Note that \[
\int\eta^{-1}(x)x^{-j}\left(\iota\mu\right)(x) = \int\eta(x)x^{j}\mu(x).
\]
\end{remark}

\begin{theorem} \label{thm:main}
The distribution $L^{-}_{p}(\tilde{\Pi}) \in \mathscr{D}_{h}(\Zp^{\times}, L)$ satisfies: for all $(-j, \eta) \in \Crit^{-}_{p}(\Pi)$,
\[
    \int_{\Zp^{\times}}\eta^{-1}(x)x^{-j}\ L^{-}_{p}(\tilde{\Pi})(x) = e_{\infty}\left(\Pi_{\infty}\times\eta_{\infty}, -j\right)e_{p}\left(\Pi_{p}\times\eta_{p}, -j\right)\cdot\frac{L^{(p)}\left(\Pi\times\eta, -j\right)}{\Omega^{-}_{\Pi}}.
\]
\end{theorem}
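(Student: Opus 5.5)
The plan is to reduce the statement to the raw interpolation formula \eqref{eq:interpolationraw}, combined with the normalisation \eqref{eq:einftyid}. The only extra work is to track the sum over test data, the involution $\iota$, and the passage from the smoothed measure to the unsmoothed one.

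First, by the remark following Definition \ref{def:padicL}, we have
\[
\int\eta^{-1}(x)x^{-j}\,L^{-}_{p}(\tilde{\Pi})(x)=\sum_{i\in I}\int\eta(x)x^{j}\,\mu_{\tilde{\Pi}}\bigl(\phi_{\varphi_{f,i}},\Phi^{(p)}_{f,i}\bigr)(x).
\]
So it suffices to compute $\sum_i$ of the right-hand side. Let $n=\max(1,v_p(\mathrm{cond}\,\eta))$. Theorem \ref{thm:growth}(2) identifies each summand with $\sum_\gamma\eta(\gamma)\Xi^{[a;j]}_{n}(\gamma)$, computed with the data $(\varphi_{f,i},\Phi^{(p)}_{f,i})$. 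Here we must choose $c$ so that $\Theta_c$ is invertible, and use \eqref{eq:crelation} to see that the result is independent of $c$.

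Next we apply Proposition \ref{prop:RSvalues} to each $i$. This expresses the summand as an explicit constant times a global Rankin--Selberg integral $\mathcal{Z}$. For $\mathfrak{Re}(s_1)\gg0$ that integral factors into local zeta integrals, and we analytically continue to $(s_1,s_2)=(\tfrac{1-j}{2},-\tfrac j2)$. We then sum over $i\in I$, using multilinearity in $(W_v,\Phi_v)$ to split the sum into products over places $v\nmid p\infty$.

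The local factors are handled as follows. Away from $p$ and $\infty$, Theorem \ref{JPSS} and the choice in Definition \ref{not:testdata} give exactly the $L$-factors $L(\Pi_v\times\eta,-j)\,L(\Pi_v\times(\omega_\Pi\eta_2)^{-1},1)$; this uses the remark that $\sum\tilde Z=1$ survives unramified twists. At $p$, Theorem \ref{thm:localzetap} applies. At $\infty$, the quantity is $\tilde e_\infty$.

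It then remains to check that the prefactor in Proposition \ref{prop:RSvalues} cancels against the $p$-adic factor. The prefactor is $\vol^{-1}/(p^n\alpha_p^n\Theta_\Pi)$, where $\vol^{-1}=p^{2R+2n}(1-p^{-1})(1-p^{-2})$. The $p$-adic factor is $\alpha_p^n/(p^{2R+n}(1-p^{-1})(1-p^{-2}))$. Their product is $1/\Theta_\Pi$. What remains is $e_p\cdot\mathcal{E}_0 L(\Pi_p\times\cdots,1)/\varepsilon$. Combining this with $L^{(p)}(\Pi\times(\omega_\Pi\eta_2)^{-1},1)$ and the definition \eqref{eq:period} of $\Omega^-_\Pi$ yields \eqref{eq:interpolationraw}. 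Substituting \eqref{eq:einftyid} then gives the theorem.

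The main obstacle is bookkeeping rather than any new idea. Specifically, we must ensure three things: that the same $n$ is used on both sides when $\eta$ is trivial or of conductor $p$; that the global integral's Euler factorisation at the critical point is justified by continuation; and that the normalisation of $\Theta_\Pi$ and $\zeta_\infty$ is common to all $i$, so that a single period $\Omega^-_\Pi$ appears. I would verify the conductor-$1$ case separately, since there $e_p$ takes its second form in Example \ref{ex:epexplicit}.
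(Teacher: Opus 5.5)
Your proposal is correct and follows the paper's route. The paper's proof sums \eqref{eq:interpolationraw} over the test data $i\in I$, substitutes \eqref{eq:einftyid}, and uses the remark on the involution $\iota$. It derives \eqref{eq:interpolationraw} itself in Section \ref{sec:interpolation} from Proposition \ref{prop:RSvalues} and Theorems \ref{JPSS} and \ref{thm:localzetap}, and you reproduce that derivation correctly: the prefactor cancels to $1/\Theta_\Pi$, the same $n=\max(1,v_p(\mathrm{cond}\,\eta))$ is used on both sides, and the leftover factors are absorbed into $\Omega^-_\Pi$.
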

\begin{proof}
Combine \eqref{eq:interpolationraw} (summed over the test data $i \in I$, which trivialises the local integrals away from $p\infty$) with \eqref{eq:einftyid} and the preceding remark.
\end{proof}
We take this opportunity to remind the reader that while the above definition and theorem are stated for an unramified $P_{1}$-refinement, every $P_{1}$-refinement is unramified up to twist by a Dirichlet character of conductor $p$ and thus we obtain $p$-adic $L$-functions for all regular $P_{1}$-refinements. 
\begin{remark}
The interpolation formula determines the values of $L^{-}_{p}(\tilde{\Pi})$ only at $(j, \eta)$ with $(-1)^{j} = \omega_{\Pi}\eta(-1)$. As in \cite[Conj.\ 1.2]{loefflerWilliams2021p}, one can further show the vanishing $\int\eta^{-1}(x)x^{-j}L^{-}_{p}(\tilde{\Pi}) = 0$ for characters of the opposite sign: the computation of Proposition \ref{prop:RSvalues} applies to every $(j, \eta)$, and for the `wrong' sign the archimedean integral vanishes identically.
\end{remark}

\begin{proposition} \label{prop:unique}
$L^{-}_{p}(\tilde{\Pi})$ is the unique element of $\mathscr{D}_{h}(\Zp^{\times}, L)$ satisfying the interpolation property of Theorem \ref{thm:main} together with the vanishing at characters of the opposite sign.
\end{proposition}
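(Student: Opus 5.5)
Existence is Theorem \ref{thm:main} together with the vanishing remark, so only uniqueness needs proof, and it follows from Proposition \ref{prop:interp}(1). Suppose $\mu_{1}, \mu_{2} \in \mathscr{D}_{h}(\Zp^{\times}, L)$ both satisfy the interpolation formula of Theorem \ref{thm:main} and the vanishing for characters of the opposite sign. Set $\mu := \iota(\mu_{1} - \mu_{2})$, where $\iota$ is the involution $x \mapsto x^{-1}$. Since $\iota$ preserves $\mathscr{D}_{h}(\Zp^{\times}, L)$ (it preserves the growth conditions of Proposition \ref{prop:interp}(2)), $\mu$ again lies in $\mathscr{D}_{h}(\Zp^{\times}, L)$.

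First enlarge $L$ so that $\mu_{p^{\infty}} \subset L$. This is harmless: the property of being in $\mathscr{D}_{h}$ and the vanishing of $\mu$ are both stable under extension of scalars. Then take any Dirichlet character $\chi$ of $p$-power conductor and any $0 \leq j \leq a$, and put $\eta := \chi$. The pair $(-j, \eta)$ falls into one of two cases.
\begin{itemize}
\item If $(-1)^{j} = \omega_{\Pi}\eta(-1)$, then $(-j, \eta) \in \Crit^{-}_{p}(\Pi)$. Here the right-hand sides of the interpolation formula for $\mu_{1}$ and $\mu_{2}$ coincide, so $\int \eta^{-1}(x)x^{-j}(\mu_{1}-\mu_{2})(x) = 0$.
\item If $(-1)^{j} = -\omega_{\Pi}\eta(-1)$, then both integrals vanish by hypothesis.
\end{itemize}
By the remark following Definition \ref{def:padicL}, in either case
\[
\int_{\Zp^{\times}} \chi(x)x^{j}\,\mu(x) = 0 .
\]

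Finally, the small slope hypothesis gives $h < a+1$, hence $\lfloor h \rfloor \leq a$. So the vanishing above holds for every $0 \leq j \leq \lfloor h \rfloor$ and every $\chi$ of $p$-power conductor. Proposition \ref{prop:interp}(1) then gives $\mu = 0$, and since $\iota$ is an involution, $\mu_{1} = \mu_{2}$.

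The only point needing care is the bookkeeping of which pairs $(j,\eta)$ are covered. We must check that every pair with $0 \le j \le \lfloor h\rfloor$ lies either in $\Crit^{-}_{p}(\Pi)$ or in the opposite-sign set; that is why the vanishing condition is needed alongside the interpolation formula. We must also check that $h<a+1$ is exactly what makes the range of $j$ large enough. Beyond this and the harmless extension of scalars, the argument is formal.
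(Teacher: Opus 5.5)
Your proof is correct and is essentially the paper's argument. The paper also takes the difference of two such distributions and applies $\iota$. It then notes that the result integrates to zero against $\chi(x)x^{j}$ for every $\chi$ of $p$-power conductor and every $0\le j\le a$, using the interpolation formula or the opposite-sign vanishing as appropriate, and concludes by Proposition \ref{prop:interp}(1) since $\lfloor h\rfloor\le a$. Your enlargement of $L$ to contain $\mu_{p^{\infty}}$ spells out a hypothesis of that proposition which the paper leaves implicit.
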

\begin{proof}
The difference of two such elements lies in some $\mathscr{D}_{h'}(\Zp^{\times}, L)$ with $h' < a + 1$ and their image under $\iota$ integrates to zero against $\eta(x)x^{j}$ for \emph{all} finite-order $\eta$ of $p$-power conductor and $0 \leq j \leq a$. The claim then follows from Proposition \ref{prop:interp}(1) (and $\iota$-invariance of $\mathscr{D}_{h}(\Zp^{\times}, L)$).
\end{proof}

\begin{remark} \label{rem:recoverLW}
If $\tilde{\Pi}$ is ordinary ($h = 0$), then $L^{-}_{p}(\tilde{\Pi})$ is a bounded measure and, by Proposition \ref{prop:unique} applied with $h' = 0$, coincides with the $p$-adic $L$-function of \cite[Definition\ 9.3]{loefflerWilliams2021p} -- including the choice of periods, provided the same normalisations of $\zeta_{\infty}$ and test data are made.
\end{remark}

\section{Duality and the right half of the critical strip} \label{sec:duality}

Finally we treat $P_{2}$-refinements and the critical values $\Crit^{+}_{p}(\Pi)$, by reduction to the $P_{1}$-theory for the dual representation, following the analogous treatment of \cite[\S 10]{loefflerWilliams2021p}. Note that if $\Pi$ is a RACAR of weight $(a, 0, -a)$ then so is $\Pi^{\vee}$, with $\omega_{\Pi^{\vee}} = \omega_{\Pi}^{-1}$ and $L(\Pi^{\vee}\times\eta^{-1}, s) = L((\Pi\times\eta)^{\vee}, s)$.

\begin{lemma} \label{lem:slopeduality}
    $\sigma_{p}\times\sigma'_{p}$ is a regular $P_{2}$-refinement of $\Pi_{p}$ if and only if $(\sigma'_{p})^{\vee}\times\sigma_{p}^{\vee}$ is a regular $P_{1}$-refinement of $\Pi^{\vee}_{p}$, and the two refinements have the \emph{same} slope. In particular, small slope $P_{2}$-refinements of $\Pi$ correspond to small slope $P_{1}$-refinements of $\Pi^{\vee}$. 
\end{lemma}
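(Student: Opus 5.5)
The plan is to reduce everything to the standard behaviour of Jacquet modules under contragredients, transported across the outer automorphism that swaps $P_1$ and $P_2$. Let $\theta(g) = w_0\,{}^{t}g^{-1}w_0$ with $w_0$ the antidiagonal permutation matrix. Then $\theta$ preserves $B_{\GL_3}$ and exchanges $P_1$ and $P_2$. On the Levi it sends $L_2 = \GL_2\times\GL_1 \ni (A, x)$ to $(x^{-1}, w\,{}^{t}A^{-1}w) \in \GL_1\times\GL_2 = L_1$.

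For generic irreducible $\Pi_p$ of $\GL_3(\Qp)$ we have $\Pi_p^\vee \cong \Pi_p\circ\theta$, by Gelfand--Kazhdan. Hence
\[
J_{P_1}(\Pi_p^\vee) \cong J_{P_2}(\Pi_p)\circ\theta|_{L_1}
\]
as representations of $L_1(\Qp)$, and the unitary normalisation is respected because $\theta$ carries the modulus character of $P_2$ to that of $P_1$.

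An irreducible subrepresentation $\sigma_p\times\sigma'_p$ of $J_{P_2}(\Pi_p)$ therefore corresponds to the subrepresentation $(\sigma'_p\circ(x\mapsto x^{-1}))\times(\sigma_p\circ(A\mapsto w\,{}^{t}A^{-1}w))$ of $J_{P_1}(\Pi_p^\vee)$. By the Gelfand--Kazhdan theorem for $\GL_2$ (and trivially for $\GL_1$), this is $(\sigma'_p)^\vee\times\sigma_p^\vee$. The correspondence is a bijection between all subquotients of the two Jacquet modules, and it sends central characters to their inverses. Distinctness of central characters is preserved under inversion, so regularity is preserved. The argument is symmetric, which gives the ``if and only if''.

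It remains to compare slopes. I expect this to be the main obstacle, because Definition \ref{def:slope} uses the central character of the \emph{first} factor $\sigma_p$, and on the two sides this factor lives in different ranks. For the $P_1$-refinement of $\Pi_p^\vee$ the slope is $v_p(\omega_{(\sigma'_p)^\vee}(p)) = -v_p(\omega_{\sigma'_p}(p))$. On the other hand, the central character of $\sigma_p\times\sigma'_p$ restricted to scalars equals $\omega_{\Pi_p}$, since the unitary normalisation is trivial on the centre. This gives $\omega_{\sigma_p}(p)\,\omega_{\sigma'_p}(p) = \omega_\Pi(p)$, which is a root of unity because $\omega_\Pi$ has finite order by \S\ref{sec:cohomological}. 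So $v_p(\omega_{\sigma_p}(p)) = -v_p(\omega_{\sigma'_p}(p))$, and the slopes agree.

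As a check, this matches the principal series case of the Example: a $P_2$-refinement $\mathrm{Ind}(\chi_j\otimes\chi_k)$ has slope $-v_p(\chi_i(p))$, and the corresponding $P_1$-refinement $\chi_i^{-1}$ of $\Pi^\vee_p$ has the same slope. The small slope statement then follows immediately from Definition \ref{def:smallslope}, since both conditions read ``slope $<0$''.
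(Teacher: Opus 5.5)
Your proof is correct, but it identifies the two Jacquet modules by a different mechanism from the paper's.

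The paper combines Casselman duality $J_{P_1}(\Pi_p^\vee)\cong J_{\bar P_1}(\Pi_p)^\vee$ with $w_0\bar P_2w_0^{-1}=P_1$ to obtain $J_{P_1}(\Pi_p^\vee)\cong J_{P_2}(\Pi_p)^\vee\circ\mathrm{Ad}(w_0)$. Because this identification is contravariant, a subrepresentation of $J_{P_2}(\Pi_p)$ first becomes a quotient. The paper therefore invokes regularity: the refinement is then a direct summand of the finite-length module $J_{P_2}(\Pi_p)$, so its dual is again a subrepresentation.

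You instead transport along the outer involution $\theta(g)=w_0\,{}^tg^{-1}w_0$, using Gelfand--Kazhdan. This gives a covariant isomorphism $J_{P_1}(\Pi_p^\vee)\cong J_{P_2}(\Pi_p)\circ\theta$, with the modulus characters matching because $\theta\colon P_1\to P_2$ is an isomorphism. Hence subrepresentations correspond to subrepresentations without any appeal to regularity. Regularity is then transported separately, as a statement about subquotients and a fixed bijection $Z(L_1)\cong Z(L_2)$ on central characters.

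The cost of your route is that Gelfand--Kazhdan is needed twice to recognise the transported pieces as contragredients. It is needed for $\Pi_p$ on $\GL_3$, where it holds for every irreducible admissible representation, so your genericity qualifier is unnecessary. It is needed again for $\sigma_p$ on $\GL_2$, where it is the classical $\sigma_p^\vee\cong\sigma_p\otimes\omega_{\sigma_p}^{-1}$. In the paper the contragredients come for free from Casselman's pairing, and only the inner conjugation by $w_0$ has to be undone.

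The slope comparison is identical in both arguments: $\omega_{\sigma_p}(p)\,\sigma'_p(p)=\omega_{\Pi_p}(p)$, and the right-hand side is a root of unity.
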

\begin{proof}
    
By combining the equality $w_0 \bar{P_2} w_0^{-1}=P_1$ where $w_0$ is the longest Weyl element $w_0=\left(\begin{smallmatrix}&&1\\&1&\\1&&\end{smallmatrix}\right)=w_0^{-1}$ 
    with  Casselman duality $J_{P_i}(\Pi_p^\vee)\cong J_{\bar{P_i}}(\Pi_p)^\vee,$
    we get an isomorphism $J_{P_1}(\Pi_p^\vee)\cong J_{P_2}(\Pi_p)^\vee \circ \operatorname{Ad}(w_0).$ Under this isomorphism, a refinement $\sigma_{p} \times \sigma_{p}'$ of $J_{P_2}(\Pi_p)$ corresponds to a refinement $\sigma_{p}'^\vee\times \sigma_{p}^\vee$. Since $J_{P_2}(\Pi_p)$ has finite length and $\sigma_{p} \times \sigma'_{p}$ is regular (and is thus a direct summand), we can dualise to get that $\sigma_{p}'^\vee \times \sigma_{p}^\vee$ is a direct summand of $J_{P_1}(\Pi_p^\vee)$ and thus a regular refinement. Then 
    \[
        \mathrm{slope}\left(\sigma_{p}^{'\vee} \times \sigma_{p}^{\vee}\right) = v_{p}(\sigma'^{\vee}(p)) = -v_{p}(\sigma_{p}'(p)) = v_{p}(\omega_{\sigma_{p}}(p)) = \mathrm{slope}\left(\sigma_{p} \times \sigma_{p}'\right),
    \]
    where we use that the central character of $\Pi_{p}$ is finite order. 
    The converse direction follows similarly by $(\Pi_p^\vee,P_1)$ in place of $(\Pi_p,P_2)$ and using $w_0 \bar{P_1}w_0^{-1}=P_2$.

\end{proof}

\begin{definition} \label{def:plus}
Let $\tilde{\Pi} = (\Pi, \sigma_{p}\times\sigma'_{p})$ be a regular small slope $P_{2}$-refined RACAR of weight $(a, 0, -a)$, with $\sigma'_{p}$ unramified, and let $\tilde{\Pi}^{\vee}$ denote the corresponding $P_{1}$-refinement of $\Pi^{\vee}$ from Lemma \ref{lem:slopeduality}. For $C \in \Zp^{\times}$ let $[C] \in \mathscr{D}_{0}\left(\Zp^{\times}, \mathcal{O}\right)$ denote the Dirac measure at $C$, and recall the operators $\iota$ and $\mathrm{tw}_{1}$ of Section \ref{sec:analytic} ($\mathrm{tw}_{1}[C] = C[C]$), which preserve $\mathscr{D}_{h}(\Zp^{\times}, L)$. The \emph{right-half $p$-adic $L$-function} of $\tilde{\Pi}$ is
\[
    L^{+}_{p}(\tilde{\Pi}) := \frac{1}{\varepsilon^{(p)}\left(\Pi^{\vee, (p)}, 0\right)}\cdot\left(\iota\circ\mathrm{tw}_{1}\right)\left(\left[N^{(p)}_{\Pi}\right]* L^{-}_{p}\left(\tilde{\Pi}^{\vee}\right)\right) \in \mathscr{D}_{h}(\Zp^{\times}, L),
\]
where $\varepsilon^{(p)}(\Pi^{\vee, (p)}, 0) = \prod_{\ell \neq p}\varepsilon_{\ell}(\Pi^{\vee}_{\ell}, 0)$ and $N^{(p)}_{\Pi}$ is the prime-to-$p$ conductor, and $*$ is convolution of distributions. We also set $\Omega^{+}_{\Pi} := \Omega^{-}_{\Pi^{\vee}}$.
\end{definition}

\begin{proposition} \label{prop:plus}
For all $(j + 1, \eta) \in \Crit^{+}_{p}(\Pi)$ we have
\[
    \int_{\Zp^{\times}}\eta(x)x^{j + 1}L^{+}_{p}(\tilde{\Pi})(x) = e_{\infty}\left(\Pi_{\infty}\times\eta^{-1}_{\infty}, j + 1\right)e_{p}\left(\Pi_{p}\times\eta^{-1}_{p}, j + 1\right)\cdot\frac{L^{(p)}\left(\Pi\times\eta^{-1}, j + 1\right)}{\Omega^{+}_{\Pi}}.
\]
\end{proposition}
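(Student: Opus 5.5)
The plan is to reduce directly to Theorem \ref{thm:main} for the dual refinement $\tilde{\Pi}^{\vee}$, which is a regular small-slope $P_{1}$-refinement of $\Pi^{\vee}$ by Lemma \ref{lem:slopeduality}, and then to transport the interpolation formula through the functional equation. Given $(j+1,\eta)\in\Crit^{+}_{p}(\Pi)$, I will first unwind Definition \ref{def:plus}. Since $\iota$ inverts the variable, $\mathrm{tw}_{1}$ multiplies by $x$, and convolution with the Dirac measure $[N]$ translates, we get
\[
\int \eta(x)x^{j+1}\,L^{+}_{p}(\tilde{\Pi})(x) = \frac{\eta^{-1}(N^{(p)}_{\Pi})(N^{(p)}_{\Pi})^{-j}}{\varepsilon^{(p)}(\Pi^{\vee,(p)},0)}\int \eta^{-1}(x)x^{-j}\,L^{-}_{p}(\tilde{\Pi}^{\vee})(x).
\]
The exact sign and exponent conventions for the translation must be checked against the action $\delta\cdot\mu$ from Section \ref{sec:analytic}. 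Next I will check that $(-j,\eta)\in\Crit^{-}_{p}(\Pi^{\vee})$. This holds because $\omega_{\Pi^{\vee}}\eta(-1)=\omega_{\Pi}\eta(-1)$, as $\omega_{\Pi}(-1)=\pm1$. Theorem \ref{thm:main} then evaluates the right-hand side as $e_{\infty}(\Pi^{\vee}_{\infty}\times\eta_{\infty},-j)\,e_{p}(\Pi^{\vee}_{p}\times\eta_{p},-j)\,L^{(p)}(\Pi^{\vee}\times\eta,-j)/\Omega^{-}_{\Pi^{\vee}}$.

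The main step is the global functional equation relating $L(\Pi^{\vee}\times\eta,-j)$ to $L(\Pi\times\eta^{-1},j+1)$. I will write it place by place as $L^{(p)}(\Pi^{\vee}\times\eta,-j)=\prod_{\ell\neq p}\gamma_{\ell}(\ldots)\cdot\gamma_{\infty}\cdot\gamma_{p}\cdot L^{(p)}(\Pi\times\eta^{-1},j+1)$, in the form $L(\pi^{\vee},1-s)=\varepsilon(\pi,s)L(\pi,s)$ applied to $\pi=\Pi\times\eta^{-1}$ and $s=j+1$.

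Away from $p$, $\eta_{\ell}$ is unramified, so $\varepsilon_{\ell}(\Pi_{\ell}\times\eta_{\ell}^{-1},s)=\varepsilon_{\ell}(\Pi_{\ell},s)\,\eta_{\ell}(\ell)^{\pm c_{\ell}}$, with $c_{\ell}$ the conductor exponent. Taking the product over $\ell$ produces exactly $\varepsilon^{(p)}(\Pi^{(p)},\cdot)$ times the character and $N^{-j}$ factors coming from $[N^{(p)}_{\Pi}]$ and $\mathrm{tw}_{1}$; this is why the Dirac measure appears in the definition. One also has to use that $\varepsilon^{(p)}(\Pi^{\vee,(p)},0)\,\varepsilon^{(p)}(\Pi^{(p)},1)=\omega_{\Pi}^{(p)}(-1)$. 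At $p$, I will show that $e_{p}(\Pi^{\vee}_{p}\times\eta_{p},-j)$ for the refinement $(\sigma'_{p})^{\vee}\times\sigma_{p}^{\vee}$, combined with the ratio of $L$-factors $L(\Pi_{p}\times\eta_{p}^{-1},j+1)/L(\Pi^{\vee}_{p}\times\eta_{p},-j)$, equals $e_{p}(\Pi_{p}\times\eta^{-1}_{p},j+1)$ for the $P_{2}$-refinement $\sigma_{p}$. This should follow from the multiplicativity of $\gamma$-factors along the Jacquet module $\sigma\times\sigma'$, since $\gamma(\Pi_{p})=\gamma(\sigma)\gamma(\sigma')$, together with the relation $\gamma(\pi^{\vee},\psi,1-s)\gamma(\pi,\psi^{-1},s)=1$. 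This is precisely why the sign of $\psi^{\pm}$ flips between the two halves of the strip. At infinity, I will verify directly from the explicit formulas of Section \ref{sec:einfty}, using \eqref{eq:piinfty}, that $e_{\infty}(\Pi^{\vee}_{\infty}\times\eta_{\infty},-j)$ times the archimedean $\gamma$-factor equals $e_{\infty}(\Pi_{\infty}\times\eta^{-1}_{\infty},j+1)$. Here $\Pi^{\vee}_{\infty}$ has the same $a$ and sign, and the $\Gamma_{\R}$ quotient in the definition of the plus factor is exactly the $\gamma$-factor of the sign character. Finally, $\Omega^{+}_{\Pi}=\Omega^{-}_{\Pi^{\vee}}$ by definition.

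I expect the main obstacle to be bookkeeping rather than conceptual: tracking the roots of unity, the signs $\omega_{\Pi}(-1)$ and $\eta(-1)$, and the $N^{(p)}_{\Pi}$ and $\eta(N)$ factors so that they cancel exactly rather than up to a unit. The $p$-adic factor identity is the most delicate part. I would first verify it in the unramified principal series case using Example \ref{ex:epexplicit}, then argue in general via multiplicativity of $\gamma$-factors, as in \cite[\S 10]{loefflerWilliams2021p}. Since all of these are identities of local factors independent of the slope, the argument there should transfer essentially verbatim.
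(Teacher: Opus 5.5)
Your proposal follows the paper's proof: unwind Definition \ref{def:plus}, apply Theorem \ref{thm:main} to $\tilde{\Pi}^{\vee}$, use the compatibility of the modified Euler factors at $p$ and $\infty$ with the functional equation, and cancel the prime-to-$p$ prefactor with Tate's twisting formula. The only difference is that the paper cites this compatibility as Coates's identity \cite[(20)]{coates89} and \cite[Lemma 2.18]{loefflerWilliams2021p}, whereas you re-derive it from multiplicativity of $\gamma$-factors and the explicit archimedean formulas.

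One slip to fix: the completed functional equation is $L(\pi^{\vee},1-s)=\varepsilon(\pi,s)^{-1}L(\pi,s)=\varepsilon(\pi^{\vee},1-s)L(\pi,s)$, not $\varepsilon(\pi,s)L(\pi,s)$. The paper uses the second form, so Tate's formula gives $\varepsilon_{\ell}(\Pi^{\vee}_{\ell}\times\eta_{\ell},-j)=\varepsilon_{\ell}(\Pi^{\vee}_{\ell},0)\,\ell^{jc_{\ell}}\eta(\ell)^{c_{\ell}}$. The prefactor then cancels exactly, without a detour through the sign $\omega^{(p)}_{\Pi}(-1)$.
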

\begin{proof}
Identical to \cite[Proposition\ 10.2]{loefflerWilliams2021p}, which we recall. Unwinding Definition \ref{def:plus} and applying Theorem \ref{thm:main} for $\tilde{\Pi}^{\vee}$,
\[
\begin{aligned}
    \int\eta(x)x^{j + 1}L^{+}_{p}(\tilde{\Pi}) &= \left(N^{(p)}_{\Pi}\right)^{-j}\eta\left(N^{(p)}_{\Pi}\right)^{-1}\varepsilon^{(p)}\left(\Pi^{\vee, (p)}, 0\right)^{-1} \\ & \qquad \times e_{\infty}\left(\Pi^{\vee}_{\infty}\times\eta_{\infty}, -j\right)e_{p}\left(\Pi^{\vee}_{p}\times\eta_{p}, -j\right)\frac{L^{(p)}\left(\Pi^{\vee}\times\eta, -j\right)}{\Omega^{-}_{\Pi^{\vee}}}.
\end{aligned}
\]
By the identity of Coates \cite[(20)]{coates89} cf.\ Section \ref{sec:ep} and Lemma 2.18 of \cite{loefflerWilliams2021p}, the right-hand side equals
\[
\begin{aligned}
    &\left(N^{(p)}_{\Pi}\right)^{-j}\eta\left(N^{(p)}_{\Pi}\right)^{-1}\varepsilon^{(p)}\left(\Pi^{\vee, (p)}, 0\right)^{-1}\left(\prod_{\ell \neq p}\varepsilon_{\ell}\left(\Pi^{\vee}_{\ell}\times\eta_{\ell}, -j\right)\right)\\ 
    &\qquad{} \times e_{\infty}\left(\Pi_{\infty}\times\eta^{-1}_{\infty}, j + 1\right)e_{p}\left(\Pi_{p}\times\eta^{-1}_{p}, j + 1\right)\frac{L^{(p)}\left(\Pi\times\eta^{-1}, j + 1\right)}{\Omega^{+}_{\Pi}},
\end{aligned}
\]
and since $\eta$ is unramified at $\ell \neq p$, Tate's twisting formula for local $\varepsilon$-factors gives $\varepsilon_{\ell}(\Pi^{\vee}_{\ell}\times\hat{\eta}_{\ell}, -j) = \varepsilon_{\ell}(\Pi^{\vee}_{\ell}, 0)\cdot\ell^{jc_{\ell}}\hat{\eta}_{\ell}(\ell^{c_{\ell}})$, with $c_{\ell}$ the exponent of $\ell$ in $N_{\Pi}$. Taking the product over $\ell \neq p$ and using the conventions of Section \ref{sec:characters} to convert $\hat{\eta}^{(p)}(N^{(p)}_{\Pi}) = \eta(N^{(p)}_{\Pi})$, all factors to the left of $e_{\infty}$ cancel and the proposition follows.
\end{proof}

This completes the proofs of Theorems A and B of the introduction: Theorem A is Theorem \ref{thm:main} together with Proposition \ref{prop:unique} and Remark \ref{rem:recoverLW}, and Theorem B is Proposition \ref{prop:plus}.

\bibliographystyle{alpha}
\bibliography{GL3}

\end{document}